\newcommand{\Gal}{{\mathrm {Gal}}}
\newcommand{\Aut}{\mathrm{Aut}}
\newcommand{\PGL}{{\mathrm{PGL}}}
\newcommand{\Z}{{\mathbb Z}}
\newcommand{\Q}{{\mathbb Q}}
\newcommand{\F}{{\mathbb F}}
\newcommand{\bq }{\begin{equation}}
\newcommand{\eq }{\end{equation}}
\theoremstyle{plain}
\newtheorem{thm}{Theorem}[section]
\newtheorem{lem}[thm]{Lemma}
\newtheorem{lema}[thm]{Lemma}
\newtheorem{prop}[thm]{Proposition}
\newtheorem{conj}[thm]{Conjecture}
\newtheorem{cor}[thm]{Corollary}
\newtheorem{rem}[thm]{Remark}
\theoremstyle{definition}
\newtheorem{defn}[thm]{Definition}
\theoremstyle{example}
\title{Bielliptic smooth plane curves and quadratic points}
\author[E. Badr] {Eslam Badr}
\address{$\bullet$\,\,Eslam Badr}
\address{Department of Mathematics,
Faculty of Science, Cairo University, Giza-Egypt}
\email{eslam@sci.cu.edu.eg}
\author[F. Bars] {Francesc Bars}
\address{$\bullet$\,\,Francesc Bars}
\address{Departament Matem\`atiques, Edif. C, Universitat Aut\`onoma de Barcelona\\
08193 Bellaterra, Catalonia} \email{francesc@mat.uab.cat}
\thanks{F. Bars is partially supported by MTM2016-75980-P}
\keywords{Plane curves; Bielliptic curves; Automorphism group;
twist}
\begin{document}
\maketitle

\begin{abstract} Let $C_k$ be a smooth projective curve over a global field $k$, which is neither rational nor elliptic. Harris-Silverman \cite{HaSi},
when $p=0$, and Schweizer, when $p>0$ together with an extra condition on
the Jacobian variety $\operatorname{Jac}(C_k)$ arising from Mordell's conjecture, showed that $C$ has infinitely many
quadratic points over some finite field extension $L/k$ inside
$\overline{k}$ (a fixed algebraic closure of $k$) if and only if $C$
is hyperelliptic or bielliptic.

Now, let $C_k$ be a smooth plane curve of a fixed degree $d\geq4$
with $p=0$ or $p>(d-1)(d-2)+1$ (up to an extra condition on $\operatorname{Jac}(C_k)$ in positive characteristic). Then, $C_k$
admits always finitely many quadratic points unless $d=4$; see
Theorem \ref{thm3.3}. A so-called \emph{geometrically complete
families} for the different strata of smooth bielliptic plane
quartic curves by their automorphism groups, are given; see Theorem
\ref{biellipticquartics}. Interestingly, we show (in a very simple
way) that there are only finitely many quadratic extensions
$k(\sqrt{D})$ of a fixed number field $k$, in which we may have more
solutions to the Fermat's and the Klein's equations of degree
$d\geq5$; $X^d+Y^d-Z^d=0$ and $X^{d-1}Y+Y^{d-1}Z+Z^{d-1}X=0$
respectively, than these over $k$ (the same holds for any non-singular projective plane equation of degree $d\geq 5$ over $k$, and also in general when $k$ is a global
field after imposing an extra condition on $\operatorname{Jac}(C_k)$); see Corollary
\ref{FermatKlein}.

Finally, given a stratum $\mathcal{M}(G)$ of smooth plane bielliptic
quartic curves over a number field $k$ associated to an automorphism
group $G$, we conjecture in section \S 3 that there are subsets
$\mathcal{E}, \mathcal{D}\subset\mathcal{M}(G)$ of infinite
cardinality, such that all members of $\mathcal{E}$ (resp.
$\mathcal{D}$) have finitely (resp. infinitely) many quadratic
points over $k$. We support our claim when $k=\Q$ and $G=\Z/6\Z,$ or
$\operatorname{GAP}(16,13)$; see Theorems \ref{thm1}, \ref{thm2},
\ref{thm3} and \ref{thm4}.
\end{abstract}

\begin{section}{The interplay between hyperelliptic (resp. bielliptic) curves and quadratic points}
Let $\overline{k}$ (resp. $k^{\operatorname{sep}}$) be a fixed algebraic (resp. separable) closure of a field $k$ of
characteristic $p\neq2$. By $C_k$ we mean a smooth projective curve defined over $k$
of geometric genus $g_C\geq2$ (that is, the genus of the base extension
$C_{\overline{k}}:=C\otimes_k\overline{k}$ is at least two), and
non-trivial automorphism group $\Aut(C_{\overline{k}})$. The set of
all $k$-points on $C_k$ is denoted by $C(k)$.

An arithmetic geometer finds a lot of interest to investigate the
cardinality of $C(k)$. When $k$ is a global field; i.e. when $k$ is
a finite field extension of either $\Q$ or $\F_p(T)$, or it is the
function field of $\mathbb{P}^1$ over the finite field $\F_p$ {(in
this case, we denote the finite field $k\cap\overline{\F}_p$ by
$\F_q$, where $q$ is a power of $p$, a priori)}.

In zero characteristic, we have the following result on \emph{Mordell's Conjecture} due to
Faltings \cite{Fa1, Fa2}:
\begin{thm}[Faltings] Given a smooth projective curve $C_k$ as above defined over a number field $k$, the set $C(k)$ is always finite.
\end{thm}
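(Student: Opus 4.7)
The plan is to follow Faltings' 1983 strategy, which reduces Mordell's conjecture to finiteness statements about abelian varieties. First, I would embed $C_k$ into its Jacobian $J_k:=\operatorname{Jac}(C_k)$ via an Abel--Jacobi map, enlarging $k$ by a finite extension if necessary to secure a rational base point (this is harmless: finiteness of $C(L)$ for a finite extension $L/k$ implies finiteness of $C(k)$). By the Mordell--Weil theorem, $J(k)$ is a finitely generated abelian group, so $C(k)=C(\overline{k})\cap J(k)$ lives inside a finitely generated group; the task is to exclude the possibility that $C$ contains infinitely many of its points.

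The main leverage is \emph{Parshin's trick}. From a hypothetical rational point $P\in C(k)$ one constructs, in a uniform way, an unramified cyclic cover $C_P\to C$ of bounded degree whose Jacobian $J_P$ has good reduction outside a finite set of primes $S$ determined by $C$ alone. If $C(k)$ were infinite one would obtain infinitely many isomorphism classes of principally polarised abelian varieties of bounded dimension with good reduction outside $S$, contradicting the \emph{Shafarevich conjecture for abelian varieties}. Thus the proof reduces to proving this Shafarevich finiteness.

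The hard core is therefore the Shafarevich conjecture, which Faltings derives from the \emph{Tate conjecture}: any Galois-equivariant homomorphism of $\ell$-adic Tate modules $T_\ell A\to T_\ell B$ arises from an actual morphism $A\to B$ of abelian varieties over $k$. This in turn rests on the introduction of the Faltings height on the moduli space of principally polarised abelian varieties, together with the analysis of how this height changes under isogeny and the proof that it takes only finitely many values in each isogeny class. The main obstacle is exactly this control of the Faltings height across isogenies and the consequent semisimplicity of the Galois representation on $T_\ell J$; once these are in hand, Shafarevich, Tate and Mordell follow in cascade. An alternative route I could instead pursue is Vojta's Diophantine-approximation proof (simplified by Bombieri), which bypasses Shafarevich by constructing an auxiliary global section on the Arakelov surface $C\times C$ with controlled intersection numbers and extracting a contradiction from an infinite sequence of rational points via the product formula; here the main obstacle shifts to the delicate height and Siegel-lemma estimates in the Arakelov setting.
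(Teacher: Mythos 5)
The paper does not prove this statement at all: it is Faltings' resolution of Mordell's conjecture, quoted as a black box with citations to \cite{Fa1, Fa2}, and everything downstream in the paper (Theorem \ref{AbramovichHarrisSilvermanI}, Theorem \ref{thm3.3}, etc.) uses it as an input. Your outline is a faithful roadmap of Faltings' actual argument --- Parshin's trick reducing Mordell to Shafarevich, Shafarevich via the Tate conjecture, and the Faltings height controlled under isogeny --- and the alternative Vojta--Bombieri route you mention is also a legitimate proof. But be clear that what you have written is a summary of the literature rather than a proof: every genuinely hard step (finiteness of the height in an isogeny class, semisimplicity of the Galois representation on $T_\ell J$, the Arakelov-theoretic estimates in the Vojta approach) is invoked by name and not established. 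That is the appropriate level of detail in the context of this paper, which itself treats the theorem as known. One small factual slip: the Kodaira--Parshin construction attaches to each $P\in C(k)$ a cover of $C$ ramified \emph{exactly over} $P$ (of bounded degree and genus, with good reduction outside a fixed set $S$), not an unramified cover of $C$ itself; the unramified covers appear only at an intermediate stage of the construction. Also, your opening reduction via Mordell--Weil is harmless but does no real work --- the finite generation of $J(k)$ was known since the 1920s and does not by itself constrain $C(k)$; the content is entirely in the Shafarevich/height part.
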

{On the other hand, we obtain by Grauert \cite{Gra} and Samuel
\cite[Theorem 4 and 5b]{Sam} the next result in positive
characteristic:}
 \begin{thm}[Grauert-Samuel]\label{GrauertSamuel} Let $C_k$ be a smooth projective curve over a global field $k$ of characteristic $p>0$. Assume also that $C_k$ is
 conservative (see the definition in \S \ref{conservativedefn}). Then, $C(k)$ is always
 finite except {possibly} when $C_k\otimes_k k^{\operatorname{sep}}$ is isomorphic to a smooth
 projective curve $C'$ over a finite field $\mathbb{F}_{q^n}$ (in this situation, $C_k$ or $C_k\otimes_k k^{\operatorname{sep}}$ is called an isotrivial curve).
 More concretely, for $C(k)$ to be infinite, it suffices the existence of a finite Galois extension $\ell'/k$, an $\ell'$-isomorphism $\varphi:C_{\ell'}=C_k\otimes_k\ell'\rightarrow C'\otimes_{\mathbb{F}_{q^n}}\ell'$,
 an injection $\operatorname{Gal}(\ell'/k)\hookrightarrow\operatorname{Aut}(C'\otimes_{\mathbb{F}_{q^n}}\ell'):\,s\mapsto j_s:=\varphi^s\circ
\varphi^{-1}$, and a point $z\in C'(\ell')\setminus
C'(\overline{\F}_p)$ satisfying $j_s(z)=z^s$ for all $s\in\operatorname{Gal}(\ell'/k)$. Under these conditions, it exists a finite
family $(x_i)_{i\in I}$ of points of $C'(\ell')$ with $x_i^s=j_s(x_i)$
for all $i\in I,\,s\in\operatorname{Gal}(\ell'/k)$, and moreover the infinite set
$C(k)$ is given by:
$$C(k)=(\bigcup_{i\in I,{m\geq 0}}\,\varphi^{-1}(f^{m}(x_i)))\cup\,\left(C(k)\cap(\varphi^{-1}(C'(\overline{\F}_{{q^n}})))\right)$$
{where $f:x\mapsto x^{q^{n\ss_0}}$ and $\ss_0$ is the strict least
positive integer $\ss$ satisfying $(x\mapsto x^{q^{n\ss}})\circ
j_s=j_s\circ(x\mapsto x^{q^{n\ss}})$ for all $s\in
\operatorname{Gal}(\ell'/k)$}.\footnote{{It is assumed in
\cite[Theorem 5b]{Sam} that all automorphisms of $C'$ are also
defined over
$\F_{q^n}$, in particular, $f\circ j_s=j_s\circ f$ for some power of the Frobenius. Accordingly, we will impose the latter condition directly.}}
%
 \end{thm}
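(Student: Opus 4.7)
The plan is to split according to whether $C_k$ is isotrivial or not. In the non-isotrivial case only the finiteness assertion is needed, while the isotrivial case requires the explicit description of $C(k)$ as well.

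For the non-isotrivial case, I would invoke Grauert's theorem \cite{Gra}, the function-field analogue of the Mordell conjecture. The conservativity hypothesis is crucial here: it ensures that the geometric genus of $C_k$ is preserved under all base extensions, ruling out the pathologies caused by wild inseparability in positive characteristic that can generate infinitely many $k$-points even for non-isotrivial curves. Under conservativity, Grauert's height inequality on sections of a non-constant family immediately yields the finiteness of $C(k)$.

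For the isotrivial case, the approach is explicit Galois descent followed by an analysis of the Frobenius action. Fix $\ell'/k$ finite Galois, an $\ell'$-isomorphism $\varphi:C_{\ell'}\to C'\otimes_{\F_{q^n}}\ell'$, and the $1$-cocycle $j_s:=\varphi^s\circ\varphi^{-1}$ in $\Aut(C'\otimes_{\F_{q^n}}\ell')$. A point $P\in C(k)$ lifts via $\varphi$ to a point $z=\varphi(P)\in C'(\ell')$ in the descent locus $\mathcal{Z}:=\{z\in C'(\ell'):\,j_s(z)=z^s\text{ for all }s\in\Gal(\ell'/k)\}$, and conversely any such $z$ descends to a $k$-point of $C_k$. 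By the choice of $\ss_0$, the map $f:x\mapsto x^{q^{n\ss_0}}$ commutes with each $j_s$ and respects the Galois action, so $f(\mathcal{Z})\subseteq\mathcal{Z}$. Splitting $\mathcal{Z}$ into its $\overline{\F}_p$-rational part and its complement, then pulling back by $\varphi^{-1}$, yields the two summands in the stated formula for $C(k)$.

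The main obstacle is to show that the complementary part $\mathcal{Z}\setminus C'(\overline{\F}_p)$, which is generically infinite, breaks up into only \emph{finitely} many $f$-orbits, represented by the $(x_i)_{i\in I}$. This is the substantive content of \cite[Theorem 5b]{Sam}: one must control how transcendental descent data interact with iterated relative Frobenius, using the fact that an infinite $f$-orbit forces the underlying field of definition of $z$ to be a transcendental extension of $\F_p$ of tightly constrained transcendence behavior. Once this finiteness of orbits is established, the explicit decomposition of $C(k)$ follows by assembling the $\varphi^{-1}$-images of the orbits $\{f^m(x_i)\}_{m\geq 0}$ together with the constant contribution from $\varphi^{-1}(C'(\overline{\F}_p))\cap C(k)$. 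The non-isotrivial half is serious but essentially black-boxed to Grauert, whereas the isotrivial half is where all genuinely characteristic-$p$ phenomena concentrate.
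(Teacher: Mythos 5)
The paper gives no proof of this theorem; it is stated as a direct citation of Grauert \cite{Gra} for the non-isotrivial finiteness and of Samuel \cite[Theorems 4 and 5b]{Sam} for the isotrivial case, which is exactly the division of labor your sketch follows, and your descent computation ($z^s=\varphi^s(P)=j_s(\varphi(P))$, $f$ preserving the descent locus since it commutes with each $j_s$ and with the Galois action) is a correct reconstruction of the mechanics behind Samuel's statement. So your proposal matches the paper's approach; the only substantive step you leave open, the finiteness of the $f$-orbits covering $\mathcal{Z}\setminus C'(\overline{\F}_p)$, is precisely the content the paper also black-boxes to \cite[Theorem 5b]{Sam}.
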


\begin{rem}
Grauert-Samuel Theorem requires $C_k$ to be geometrically non-singular instead of being conservative, but if $C_k$ is
conservative then it is also geometrically non-singular by \cite{Gra}.

\end{rem}
\begin{rem}
An example of Theorem \ref{GrauertSamuel}, where an infinite number
of points occur, is the Fermat curve $C_k:X^d+Y^d=Z^d$ over the
global field $k=\F_{p}(w,v)/(w^d+v^d-1)$ with $d$ and
$p$ coprime. It is isomorphic to the Fermat curve over $\F_p$, and
we get an infinite set of points over $k$ on it namely,
$(f^n(w),f^n(v),1)$ for each positive integer $n$ by using the point
$(w,v,1)\in C(k)\setminus C(\overline{\F}_p)$ where $f$ is the
Frobenious $x\mapsto x^p$.
\end{rem}

For a finite field extension $L/k$ inside $\overline{k}$, the set of quadratic points of $C_k$ over $L$, denoted by $\Gamma_2(C,L)$, is given by
$$\Gamma_2(C,L):=
\bigcup\big\{C(L')\,:\,L\subseteq
L'\subseteq\overline{k}\,\text{with}\,[L':L]\leq 2\big\},$$ where (by an abuse of notation) $C(L')$ denotes the set of $L'$-points
of $C_{L'}:=C\otimes_kL'$.

It is a natural question to study whether
$\Gamma_2(C,L)$ is finite or not.

\begin{defn} A smooth projective curve $C_k$ is called \emph{hyperelliptic} (resp. \emph{bielliptic}) \emph{over $k$} if there exists a degree two $k$-morphism to a projective line
 $\mathbb{P}^1_k$ (resp. to an elliptic curve $E_k$) over $k$. We simply call it \emph{hyperelliptic} (resp. \emph{bielliptic}) if $C_{\overline{k}}$ is hyperelliptic (resp. bielliptic) over $\overline{k}$.
\end{defn}
\begin{rem}
Clearly, if $C_k$ is hyperelliptic over $k$ and $k$ is not a finite field, then $\Gamma_2(C,k)$ is an infinite set. Also, if $C_k$ is bielliptic over $k$ and $E_k$ has infinitely many $k$-points, then
$\Gamma_2(C,k)$ is again an infinite set.
 \end{rem}

The following result is well-known in the literature (cf. \cite{Sch} for $(ii)$).
\begin{prop}\label{prop2.3} Let $C_k$ be a smooth projective curve over $k$. Then,
\begin{enumerate}[(i)]
\item $C_k$ is hyperelliptic if and only if there exists a (hyperelliptic) involution $w\in
\Aut(C_{\overline{k}})$, having exactly $2g_C+2$ fixed points. In particular, if $C_k$ is hyperelliptic, then $w$ is unique, defined over a finite purely inseparable extension $\ell/k$ of
$k$, and it is called the \emph{hyperelliptic involution} of $C_k$.
\item $C_k$ is bielliptic if and only if there exists a (bielliptic) involution $\tilde{w}\in
\Aut(C_{\overline{k}})$, having $2g_C-2$ fixed points. If $C_k$ is
bielliptic and $g_C\geq 6$, then there is a unique
bielliptic involution, which belongs to the center of
$\Aut(C_{\overline{k}})$ and defined over a finite
purely inseparable extension $\ell$ of $k$.
\end{enumerate}
\end{prop}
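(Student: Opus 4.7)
The plan is to unify both parts via Riemann--Hurwitz applied to the degree-two quotient $C_{\bar k} \to C_{\bar k}/\langle w \rangle$, deducing the fixed-point count directly from the formula and the uniqueness (hence field of definition) of the involution from intrinsic properties of the associated $g^1_2$ in part (i) and of the unique bielliptic structure for large enough $g_C$ in part (ii).

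For part (i), in the forward direction a degree-two $\bar k$-morphism $\pi \colon C_{\bar k} \to \mathbb{P}^1_{\bar k}$ supplies the deck involution $w$, whose fixed points coincide with the ramification of $\pi$; Riemann--Hurwitz immediately yields $2g_C+2$ such points. Conversely, given an involution $w$ with that many fixed points, the same formula forces $C_{\bar k}/\langle w \rangle$ to have genus $0$, and the image of any fixed point is a $\bar k$-rational point, identifying the quotient with $\mathbb{P}^1_{\bar k}$. Uniqueness of $w$ is the classical uniqueness of the $g^1_2$ on a hyperelliptic curve: the canonical map factors through the quotient by $w$, so $w$ is intrinsically determined.

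For part (ii) the same Riemann--Hurwitz argument handles the equivalence, the quotient now being of genus $1$ and made into an elliptic curve by taking the image of a fixed point as origin. The substantive difficulty, which I expect to be the main obstacle, is uniqueness of $\tilde w$ for $g_C \geq 6$; I would proceed by contradiction. Suppose two distinct bielliptic involutions $w_1, w_2$ exist, so that $H := \langle w_1, w_2 \rangle \leq \Aut(C_{\bar k})$ is dihedral. In the Klein-four case Accola's genus formula
\[
g_C + 2\, g(C_{\bar k}/H) \;=\; g(C_{\bar k}/\langle w_1 \rangle) + g(C_{\bar k}/\langle w_2 \rangle) + g(C_{\bar k}/\langle w_1 w_2 \rangle)
\]
combined with the Riemann--Hurwitz bound on $g(C_{\bar k}/\langle w_1 w_2 \rangle)$ forces $g_C \leq 5$; the non-commuting dihedral case of order $\geq 6$ is ruled out by applying the Castelnuovo--Severi inequality to the two elliptic quotients. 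Once $\tilde w$ is unique, centrality is immediate: for any $\phi \in \Aut(C_{\bar k})$, the conjugate $\phi \tilde w \phi^{-1}$ is again a bielliptic involution and therefore equals $\tilde w$.

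Finally, uniqueness forces $w$ (resp.\ $\tilde w$) to be fixed by $\Gal(k^{\operatorname{sep}}/k)$, so it descends to an element of $\Aut(C_{k^{\operatorname{sep}}})$; the residual gap between $\Aut(C_{\bar k})$ and $\Aut(C_{k^{\operatorname{sep}}})$ in positive characteristic accounts for the finite purely inseparable extension $\ell/k$ in the statement (and in characteristic zero one has $\ell = k$). The delicate point throughout is the Accola-type uniqueness step in (ii): excluding large dihedral subgroups containing two distinct bielliptic involutions is precisely where the hypothesis $g_C \geq 6$ is used.
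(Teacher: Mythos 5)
The paper offers no proof of Proposition \ref{prop2.3} at all---it is stated as well known, with a pointer to the literature (to \cite{Sch} for part (ii))---so there is nothing to match your argument against; judged on its own, your proof is essentially correct and is the standard one. A few remarks. First, your case split in the uniqueness step of (ii) is redundant: the Castelnuovo--Severi inequality applied to the two degree-two elliptic quotients already gives $g_C\leq 2\cdot 1+2\cdot 1+(2-1)(2-1)=5$ whether or not $w_1$ and $w_2$ commute (two distinct degree-two maps cannot factor through a common covering of degree $\geq 2$), so the Accola formula is not needed; it does no harm, but the ``main obstacle'' you identify is a one-line application of Castelnuovo--Severi. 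Second, your fixed-point counts via Riemann--Hurwitz silently assume tame ramification; this is fine only because the paper's standing hypothesis is $\operatorname{char}(k)\neq 2$, and you should say so. Third, the descent statement at the end is garbled: Galois-invariance of the unique involution under $\Aut(\overline{k}/k)$ shows that $w$ is defined over the fixed field of that action inside $\overline{k}$, which is the perfect (purely inseparable) closure of $k$; since $w$ involves only finitely many coefficients it is then defined over a finite purely inseparable extension $\ell/k$. Saying that $w$ ``descends to an element of $\Aut(C_{k^{\operatorname{sep}}})$'' inverts the roles of the separable and inseparable parts---the whole point is that $w$ need \emph{not} be defined over $k^{\operatorname{sep}}\cap\ell'$ for any separable $\ell'$, only over a purely inseparable extension. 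With these adjustments the argument is complete and matches the classical references the paper relies on.
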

%
\subsection{Conservative curves}\label{conservativedefn} Let $C_k$ be a smooth projective curve over a global field $k$ of characteristic $p>0$. The genus of $C_k$ relative to $k$ is defined to be the integer $g_{C,k}$ that makes the Riemann-Roch formula hold, that is, for any $k$-divisor $D$ of
$C$, of sufficiently large degree; $\ell(D)=\operatorname{deg}(D)+1-g_{C,k}$, where $\ell(D)$ is the dimension
of the $k$-(Riemann-Roch) vector space associated to $D$. The relative genus may change under inseparable extensions of $k$ inside $\overline{k}$, see for example \cite{Tate}. The absolute genus of $C_k$ is defined to be
the genus of $C_{\overline{k}}$ relative to $\overline{k}$, in particular it equals to the geometric genus $g_C$ we have seen before.

The relative genus $g_{C,k}$ to $k$ is an upper bound for the absolute genus $g_C$. We call $C_k$ \emph{conservative over $k$}, if $g_C=g_{C,k}$ (in particular, it is not genus-changing under inseparable extensions between
$k$ and $\overline{k}$).

First, we prove:
\begin{prop} \label{lem1}\label{Lem2.4} Let $C_k$ be a smooth projective curve defined over a global field $k$ of characteristic $p>0$, that is conservative over $k$. Assume also that $C_k$ is hyperelliptic with hyperelliptic
involution $w$ defined over a finite purely inseparable extension $\ell/k$ in $\overline{k}$. Then, there is a (unique) degree two $\ell$-morphism $\varphi$ to a conic $Q$ over $\ell$. Moreover, if $C_{\ell}$ (or more generally if $C_{\ell}/\langle w\rangle$) has an $\ell$-point, then we reduce to that $Q$ is $\ell$-isomorphic to $\mathbb{P}^1_{\ell}$.
\end{prop}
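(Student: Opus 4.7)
The plan is to realize $Q$ as the quotient curve $C_\ell/\langle w\rangle$ and then invoke the classical characterization of smooth genus zero curves. More precisely, I would proceed in four steps.

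\textbf{Step 1: Construct the quotient.} Since $w$ is an $\ell$-automorphism of $C_\ell$ of order two, the geometric quotient $Q:=C_\ell/\langle w\rangle$ exists as a projective $\ell$-scheme (one can glue affine quotients $\operatorname{Spec}(A^{\langle w\rangle})$ since $C_\ell$ is covered by $w$-stable affines). The degree-two quotient morphism $\varphi\colon C_\ell\to Q$ is defined over $\ell$. Because $p\neq 2$, the cover $\varphi$ is tame, so $Q$ is smooth. Base-changing to $\overline{k}$ recovers the usual hyperelliptic quotient $C_{\overline{k}}\to\mathbb{P}^1_{\overline{k}}$ by Proposition \ref{prop2.3}(i); hence $Q_{\overline{k}}\cong\mathbb{P}^1_{\overline{k}}$, so $Q$ is geometrically integral of absolute genus $0$.

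\textbf{Step 2: Identify $Q$ with a conic.} At this point I would invoke the conservativity hypothesis to guarantee that $g_{Q,\ell}=g_Q=0$: since $\ell/k$ is purely inseparable and $C_k$ is conservative, $C_\ell$ is conservative over $\ell$, and the tame quotient $Q$ inherits this property. Consequently Riemann--Roch over $\ell$ gives $\dim_{\ell}H^{0}(Q,-K_Q)=3$ with $\deg(-K_Q)=2$, and the anticanonical map is a closed immersion $Q\hookrightarrow\mathbb{P}^{2}_{\ell}$ whose image is a smooth conic. This furnishes the required $\ell$-morphism $\varphi\colon C_\ell\to Q$ of degree two onto a conic defined over $\ell$.

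\textbf{Step 3: Uniqueness.} By Proposition \ref{prop2.3}(i), the hyperelliptic involution $w$ is unique on $C_{\overline{k}}$, so any other degree-two $\ell$-morphism from $C_\ell$ to a smooth genus zero $\ell$-curve $Q'$ must, after base change to $\overline{k}$, coincide with $\varphi_{\overline{k}}$ up to an automorphism of $\mathbb{P}^{1}_{\overline{k}}$. By Galois descent (using the purely inseparable or separable descent data on $Q$ and $Q'$) one obtains an $\ell$-isomorphism $Q\xrightarrow{\sim}Q'$ intertwining the two quotient maps.

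\textbf{Step 4: From conic to $\mathbb{P}^1_\ell$.} If $C_\ell/\langle w\rangle$ possesses an $\ell$-point $P$, then the smooth conic $Q\subset\mathbb{P}^{2}_\ell$ carries a rational point. Projection from $P$ then exhibits $Q$ as $\ell$-isomorphic to $\mathbb{P}^{1}_\ell$; this is the well-known fact that a smooth conic with a rational point is split. In particular, if $C_\ell$ itself has an $\ell$-point, so does its image under $\varphi$, and the same conclusion applies.

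The main subtlety I foresee is Step 2: one has to be careful that the relative genus of the quotient stays equal to the absolute genus under the purely inseparable extension $\ell/k$, and this is precisely why the conservativity assumption on $C_k$ is indispensable; without it, the natural map to $Q$ could a priori be a map to a non-smooth or a higher relative-genus curve over $\ell$, and the identification of $Q$ with a plane conic (rather than with some twisted Brauer--Severi variety pathology in inseparable characteristic) would fail.
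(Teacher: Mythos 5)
Your proof is correct, but it takes a genuinely different route from the paper. You build $Q$ directly as the scheme quotient $C_\ell/\langle w\rangle$ over $\ell$ and then realize it as a conic via the anticanonical embedding; the paper instead works on $\mathbb{P}^1_{\ell^{\operatorname{sep}}}$ and produces $Q$ by Galois descent, namely it observes that the uniqueness of $w$ forces $^{\sigma}\pi=\xi_\sigma\circ\pi$ for a $1$-cocycle $\xi\colon\Gal(\ell^{\operatorname{sep}}/\ell)\to\PGL_2(\ell^{\operatorname{sep}})$, and then applies the twisting theorem of \cite[III.1]{Se} to obtain the conic $Q$ and the $\ell$-morphism $\varphi=\varphi_0^{-1}\circ\pi$. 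Your approach is more elementary and arguably cleaner: since $w$ is an $\ell$-automorphism, the quotient and the degree-two map exist over $\ell$ at once, with no descent needed, and it also yields the uniqueness assertion transparently. Two small refinements: (a) ``the cover is tame, so $Q$ is smooth'' is not quite the right justification --- over the imperfect field $\ell$ a normal quotient need not be smooth; the correct reason is that taking $\langle w\rangle$-invariants commutes with the base change $-\otimes_\ell\overline{k}$ because $2\in\ell^{\times}$, so $Q_{\overline{k}}\cong C_{\overline{k}}/\langle w\rangle\cong\mathbb{P}^1_{\overline{k}}$ and $Q$ is therefore geometrically smooth; (b) once $Q$ is smooth over $\ell$, cohomology commutes with base change and $g_{Q,\ell}=g_Q=0$ follows automatically, so your appeal to conservativity of the quotient (which you assert but do not prove) can be dispensed with --- conservativity here is really a standing hypothesis of the section rather than an ingredient your argument needs. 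With these adjustments your four steps constitute a complete proof.
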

\begin{proof} By assumption $C_{\ell}/\langle w\rangle$ is a genus $0$ curve defined over $\ell$. Thus it corresponds to a conic over $\ell$, since by definition  it is a twist for $\mathbb{P}^1_{\ell}$. Next, the covering $\pi:C_{\ell}\rightarrow C_{\ell}/\langle w\rangle$ is Galois (being cyclic of degree $2$), hence $\pi$ is defined over a separable extension of $\ell$ inside $\overline{k}$, a priori. Fix a separable closure $\ell^{\operatorname{sep}}\subseteq\overline{k}$ of $\ell$ and let $\operatorname{Gal}(\ell^{\operatorname{sep}}/\ell)$ denotes the absolute Galois group. By the uniqueness of the hyperelliptic involution $w$, $\pi$ and $^{\sigma}\pi$ only differs by an automorphism $\xi_{\sigma}$ of $\mathbb{P}^1_{\ell^{\operatorname{sep}}}$, for any $\sigma\in
\Gal(\ell^{\operatorname{sep}}/\ell)$. In other words, for any $\sigma\in\Gal(\ell^{\operatorname{sep}}/\ell)$, we obtain $\xi_{\sigma}\in {\PGL}_2(\ell^{\operatorname{sep}})$; the projective general
linear group of $2\times2$ matrices over $\ell^{\operatorname{sep}}$), where $^{\sigma}\pi=\xi_{\sigma}\circ\pi$. It can be easily checked that $\xi_{\sigma\tau}=\,^{\sigma}\xi_{\tau}\circ\xi_{\sigma}$ for all $\sigma,\tau\in \Gal(\ell^{\operatorname{sep}}/\ell)$. Therefore, $$\xi:\Gal(\ell^{\operatorname{sep}}/\ell)\rightarrow{\PGL}_2(\ell^{\operatorname{sep}}):\sigma\mapsto\xi_{\sigma}$$ defines a 1-cocycle, in particular, an
element of the first Galois cohomology set $\operatorname{H}^1(\Gal(\ell^{\operatorname{sep}}/\ell),\PGL_2(\ell^{\operatorname{sep}}))$. Using the Twisting Theory for Varieties (cf. \cite[III.1]{Se}), it exists a conic $Q$ (a twist for $\mathbb{P}^1_{\ell}$) over $\ell$ and an isomorphism $\varphi_0:Q\rightarrow C_{\ell}/\langle w\rangle$ given by the rule $\xi_{\sigma}=\,^{\sigma}\varphi_0\circ\varphi_0^{-1}$, for all
$\sigma\in \Gal(\ell^{\operatorname{sep}}/\ell)$. Consequently, $\varphi:=\varphi_0^{-1}\circ\pi:C_{\ell}\rightarrow Q$ is an $\ell$-morphism from $C_{\ell}$ to $Q$.

The rest is direct, since a conic over $\ell$ that has an
$\ell$-point is $\ell$-isomorphic to $\mathbb{P}^1_{\ell}$. This obviously happens if $C_{\ell}$ or
$C_{\ell}/\langle w\rangle$ has an $\ell$-point via the morphism
$\varphi$.
\end{proof}

\begin{cor} Let $C_k$ be a smooth projective curve defined over a global field $k$ of characteristic $p>0$, that is conservative over $k$. Then, $C_k$ is
hyperelliptic if and only if there exists a finite extension $L/k$ inside $\overline{k}$ where $C_k\otimes_kL$
is hyperelliptic over $L$. In this situation, $\Gamma_2(C,L)$ is an
infinite set.

\end{cor}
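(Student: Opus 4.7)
The plan is to derive the corollary directly from Proposition \ref{Lem2.4}, together with the elementary fact that any conic over a field acquires a rational point after passage to a suitable finite extension. The equivalence then splits into two implications, one immediate and one obtained by trivializing the conic; the infiniteness statement for $\Gamma_2(C,L)$ will follow at once from having a degree two $L$-morphism to $\mathbb{P}^1_L$.

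First I would dispose of the easy implication: if $C_L$ is hyperelliptic over $L$ for some finite extension $L/k$ inside $\overline{k}$, then a degree two $L$-morphism $C_L\to\mathbb{P}^1_L$ base-changes to a degree two $\overline{k}$-morphism $C_{\overline{k}}\to\mathbb{P}^1_{\overline{k}}$, so $C_k$ is hyperelliptic by definition. For the converse, assume $C_k$ is hyperelliptic. Proposition \ref{Lem2.4} provides a finite purely inseparable extension $\ell/k$ and a degree two $\ell$-morphism $\varphi:C_{\ell}\to Q$, where $Q$ is a conic over $\ell$. To trivialize $Q$, I would choose any closed point $P\in Q$ (which exists because $Q(\overline{\ell})\neq\emptyset$) and let $L$ be its residue field; this is a finite extension of $k$. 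Then $Q_L$ has an $L$-point, so by the last assertion of Proposition \ref{Lem2.4} one obtains $Q_L\cong\mathbb{P}^1_L$ over $L$, and composing $\varphi_L$ with this isomorphism yields a degree two $L$-morphism $C_L\to\mathbb{P}^1_L$. Hence $C_L$ is hyperelliptic over $L$.

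Finally, to prove that $\Gamma_2(C,L)$ is infinite, I would use the degree two $L$-morphism $\varphi_L:C_L\to\mathbb{P}^1_L$ just constructed. Since $L$ is a finite extension of a global field of positive characteristic, $\mathbb{P}^1(L)$ is an infinite set, and for each $x\in\mathbb{P}^1(L)$ the fiber $\varphi_L^{-1}(x)$ is non-empty and consists of points defined over some extension $L'/L$ with $[L':L]\leq 2$, hence lies in $\Gamma_2(C,L)$; this exhibits infinitely many quadratic points. No serious obstacle is expected here: the only delicate step is that Proposition \ref{Lem2.4} produces a conic rather than $\mathbb{P}^1$, but this is cleared by enlarging $\ell$ to the residue field of any closed point of $Q$.
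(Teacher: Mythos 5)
Your proof is correct and is essentially the argument the paper intends: the corollary is stated there without proof as an immediate consequence of Proposition \ref{Lem2.4}, and your deduction (trivialize the conic $Q$ over the residue field of a closed point, then count fibers of the resulting degree two map to $\mathbb{P}^1_L$ over the infinitely many $L$-points) is exactly the expected route. No gaps.
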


Also, we show:
\begin{prop}\label{100} Let $C_k$ be a smooth projective curve defined over a global field $k$ of characteristic $p>0$, that is conservative over $k$.
Then, $C_k$ is bielliptic if and only if there exists a finite
extension $L/k$ inside $\overline{k}$ where $C_k\otimes_kL$ is bielliptic over $L$ (hence, $\Gamma_2(C,L')$ is
an infinite set for some finite extension $L'/L$ inside $\overline{k}$).
\end{prop}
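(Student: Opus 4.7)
The plan is to adapt the twist/quotient argument of Proposition \ref{Lem2.4} to the bielliptic setting: work over a finite extension of $k$ where the bielliptic involution is defined, take the degree-two quotient (now a genus-one curve instead of a conic), and then enlarge the field further to endow the quotient with a rational point so it becomes an elliptic curve. The reverse direction is immediate by base change: a degree-two $L$-morphism $C_L\to E$ extends scalars to a degree-two $\overline{k}$-morphism $C_{\overline{k}}\to E_{\overline{k}}$, making $C$ bielliptic. The substance lies in the forward direction.

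I would begin by fixing a bielliptic involution $\tilde w\in\Aut(C_{\overline{k}})$. By Proposition \ref{prop2.3}(ii), when $g_C\geq 6$ this $\tilde w$ is unique and already defined over a finite purely inseparable extension $\ell/k$; when $2\leq g_C<6$ the group $\Aut(C_{\overline{k}})$ is still finite, so the $\Gal(\overline{k}/k)$-orbit of $\tilde w$ is finite and $\tilde w$ is defined over some finite extension $\ell/k$ in any case. Over such $\ell$ the quotient morphism $\pi\colon C_{\ell}\to X_{\ell}:=C_{\ell}/\langle\tilde w\rangle$ is $\ell$-defined; notably---and unlike the hyperelliptic case of Proposition \ref{Lem2.4}---no cohomological cocycle is required here because $\tilde w$ itself, and not merely its conjugacy class, is $\ell$-rational. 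Riemann--Hurwitz over $\overline{k}$, applied to the $2g_C-2$ geometric fixed points of $\tilde w$ recorded in Proposition \ref{prop2.3}(ii), then gives $X_{\ell}$ absolute genus one; conservativeness of $C_k$, propagated through $\ell/k$ and through the degree-two quotient, ensures $X_{\ell}$ is an honest smooth projective genus-one curve over $\ell$, hence a torsor for its Jacobian elliptic curve.

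To turn $X_{\ell}$ into an actual elliptic curve I would pick any $P\in C(\overline{k})$, extend $\ell$ to the (finite) field of definition $L$ of $\pi(P)$, and declare $\pi(P)$ the origin; then $X_L$ is an elliptic curve $E/L$ and $\pi\colon C_L\to E$ is the desired bielliptic structure over $L$. For the parenthetical conclusion, I would enlarge $L$ further to $L'/L$ finite with $E(L')$ infinite---attainable by growing the Mordell--Weil rank in the number-field or non-isotrivial function-field settings, or by exploiting torsion and quadratic twists in the isotrivial positive-characteristic setting---and pull each $Q\in E(L')$ back through $\pi$ to at most two points of $C$ defined over a field of degree at most two over $L'$, producing infinitely many elements of $\Gamma_2(C,L')$. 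The main obstacles I anticipate are, first, confirming that conservativeness descends through the quotient so that the relative genus of $X_{\ell}$ over $\ell$ really equals the geometric genus one, and, more delicately, arranging the infinitude of $\Gamma_2(C,L')$ in the isotrivial positive-characteristic case where $E(L')$ itself remains finite for every finite $L'/L$---this last point forcing the argument to aggregate quadratic preimages across a tower of quadratic extensions rather than relying on a single infinite $E(L')$.
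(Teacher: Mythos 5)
Your proposal follows essentially the same route as the paper's proof: use the finiteness of $\Aut(C_{\overline{k}})$ to define the bielliptic involution over a finite extension, pass to the genus-one quotient, invoke conservativity and a further finite extension to acquire a rational point (hence an elliptic curve $E$ over $L$), and then enlarge once more to make the rank positive so that $\Gamma_2(C,L')$ is infinite. The extra contingency you raise for the isotrivial case is not needed: even for a constant elliptic curve one can find a finite extension $L'/L$ (e.g.\ the function field of a curve admitting a non-constant map to $E$) over which $E(L')$ contains a point of infinite order, which is exactly the step the paper asserts.
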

\begin{proof} Assume that $C_k$ is bielliptic and consider a bielliptic involution $\tilde{w}\in
\Aut(C_{\overline{k}})$ as in Proposition \ref{prop2.3}. Since
$g_C\geq2$, $\Aut(C_{\overline{k}})$ is a finite group, and so we
only have finitely many possibilities for the Galois group
$\Gal(\overline{k}/k)$-action on $\tilde{w}$. Accordingly,
$\tilde{w}$ must be defined over a finite field extension $L_0/k$ inside $\overline{k}$. Because $C_k$ is also conservative
over $k$, then by making a finite extension $L/L_0$ with
$L\subseteq\overline{k}$, we get a degree two $L$-morphism from
$C_k\otimes_kL$ to a genus one curve that has $L$-points, hence to an
elliptic curve $E$ over $L$, and hence $C_k$ is bielliptic over $L$.
Finally, it suffices to apply base extension to some $L\subseteq
L'\subseteq\overline{k}$ of finite index so that $E\otimes_LL'$ has
positive rank. Consequently, $\Gamma_2(C,L')$ is infinite and we
conclude.
\end{proof}

Furthermore, inspired by the case of number fields (Theorems
\ref{AbramovichHarrisSilvermanI} and \ref{AbramovichHarrisSilvermanII} below), we have \cite[Theorem 5.1]{Sch}:
\begin{thm}[Schweizer, version I]\label{SchweizerI} Let $C_k$ be a conservative smooth projective curve over a finite field extension $k$ of $\F_q(T)$; i.e. $k$ is a global field of characteristic $p>0$. Under the conditions that; $g_C\geq3$, $C(k)\neq\emptyset$, $C_k$ is not hyperelliptic over $k$, and that the
Jacobian variety $\operatorname{Jac}(C_{k})$ over $\overline{k}$ has
no non-zero homomorphic images defined over $\overline{\F}_q$, then
$\Gamma_2(C,k)$ is an infinite set only if
$\operatorname{Jac}(C_{k})$ over $k$ contains an elliptic curve
$E_k$ of positive rank, moreover there is a degree two morphism from
$C_k$ to $E_k$.
\end{thm}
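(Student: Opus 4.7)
The plan is to mimic the classical Harris--Silverman argument from the number-field case, using a function-field Mordell--Lang theorem in place of Faltings; the hypothesis that $\operatorname{Jac}(C_k)$ has no non-zero homomorphic image defined over $\overline{\F}_q$ is precisely the non-isotriviality condition needed to eliminate the extra counterexamples peculiar to positive characteristic.

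First I would reduce to the symmetric square: if $\Gamma_2(C,k)$ is infinite, then so is $C^{(2)}_k(k)$, where $C^{(2)}_k:=(C_k\times C_k)/S_2$, because every Galois-conjugate pair of quadratic points (and every unordered pair of $k$-points) yields a $k$-rational point of $C^{(2)}_k$. Using the $k$-point $P_0$ provided by $C(k)\neq\emptyset$, I would form the Abel--Jacobi morphism
$$\phi:C^{(2)}_k\longrightarrow \operatorname{Jac}(C_k),\qquad \{P,Q\}\mapsto [P+Q-2P_0],$$
and let $W_2:=\phi(C^{(2)}_k)$. Since $C_k$ is not hyperelliptic over $k$ and, by Proposition \ref{Lem2.4} combined with the conservative hypothesis and the existence of $P_0$, not hyperelliptic over $\overline{k}$ either, the morphism $\phi$ is birational onto $W_2$. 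In particular $W_2(k)$ is infinite.

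Next I would invoke a function-field Mordell--Lang result (Hrushovski, or the Abramovich--Voloch version adapted to the present setting): under the stated non-isotriviality condition on $\operatorname{Jac}(C_k)$, an infinite set of $k$-points inside a $k$-subvariety of an abelian variety over $k$ must be Zariski-dense in a translate of a non-zero abelian subvariety. Applied to $W_2\subseteq \operatorname{Jac}(C_k)$, this yields an abelian subvariety $A\subseteq \operatorname{Jac}(C_k)$ with $\dim A\in\{1,2\}$ together with a point $t$ such that $t+A\subseteq W_2$. If $\dim A=2$, then $W_2$ is itself a translated abelian surface, and the usual surface-theoretic dichotomy of Harris--Silverman forces $C_k$ either to be hyperelliptic (excluded) or to admit a degree-two map onto an elliptic quotient. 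If $\dim A=1$, then $A$ is an elliptic curve $E$, and pulling back $t+E$ along the birational map $\phi$ produces a curve in $C^{(2)}_k$ whose projection to either factor of $C_k\times C_k$ supplies a degree-two $k$-morphism $C_k\to E$ (once the hyperelliptic alternative has again been ruled out). In either case the construction descends to $k$, yielding a degree-two $k$-morphism $C_k\to E_k$ and, a fortiori, $E_k$ as an elliptic subvariety of $\operatorname{Jac}(C_k)$.

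Lastly, the positive-rank statement is forced: if $E_k(k)$ were finite, then only finitely many points of $C_k$ over $k$ (or over quadratic extensions) could pull back from $E_k$, contradicting the infiniteness of $\Gamma_2(C,k)$. The main obstacle is carrying out the Mordell--Lang step cleanly in characteristic $p$: one must cite the non-isotrivial version for global function fields and verify that the hypothesis on homomorphic images of $\operatorname{Jac}(C_k)$ is exactly what the theorem demands, and then check that the geometric output (in particular the elliptic curve $E$ and the degree-two map) descends from $\overline{k}$ back to $k$---the latter being largely formal since $W_2$, $\phi$ and $\operatorname{Jac}(C_k)$ are already $k$-objects, together with uniqueness of the resulting elliptic quotient up to isogeny.
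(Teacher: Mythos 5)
The paper does not actually prove Theorem \ref{SchweizerI}: it is quoted verbatim from Schweizer \cite[Theorem 5.1]{Sch}, so there is no in-text argument to compare against. Your strategy --- pass to $C^{(2)}$, embed it in $\operatorname{Jac}(C_k)$ via a $k$-rational base point, and apply a positive-characteristic Mordell--Lang theorem for which the hypothesis ``no non-zero homomorphic image of $\operatorname{Jac}(C_k)$ over $\overline{\F}_q$'' is exactly the vanishing of the $\overline{\F}_q$-trace --- is the same strategy as Schweizer's original proof (itself modelled on Abramovich--Harris and Harris--Silverman), so the overall architecture is sound.

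There are, however, two concrete gaps. First, your closing claim that the descent of the degree-two morphism $C_k\to E_k$ back to $k$ is ``largely formal'' is an overstatement, and the paper itself contradicts it: Remark \ref{rem1.11} is devoted precisely to this point, and there the authors need the \emph{extra} hypotheses $g_C\geq 6$ and a version of Castelnuovo's inequality valid over the (imperfect) global field $k$ to force $k$-rationality of the map; the theorem as stated only produces the elliptic subvariety $E_k\subseteq\operatorname{Jac}(C_k)$ over $k$ together with a degree-two morphism a priori defined over an extension. Producing a degree-two morphism from the inclusion of a coset $t+E\subseteq W_2$ is exactly the step in \cite[Lemma 5]{HaSi} that uses Castelnuovo, and in characteristic $p$ one must additionally rule out inseparability phenomena; none of this is automatic from $W_2$, $\phi$ and $\operatorname{Jac}(C_k)$ being $k$-objects. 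Second, your positive-rank argument is misdirected: the quadratic points do not ``pull back from $E_k$'' in any obvious sense at this stage. The correct route is that $W_2(k)\subseteq\Gamma\cap W_2$ with $\Gamma=\operatorname{Jac}(C_k)(k)$ finitely generated by Lang--N\'eron (this is a second, essential use of the trace-zero hypothesis), so Mordell--Lang puts the infinitely many points into finitely many cosets, one of which, say $t+E$, must contain infinitely many $k$-points; since that coset has a $k$-point it is a trivial torsor, whence $E(k)$ is infinite and, by finite generation, of positive rank. Finally, a smaller point worth making explicit: upgrading ``not hyperelliptic over $k$'' to ``not geometrically hyperelliptic'' (needed for $\phi$ to be birational onto $W_2$) requires descending the hyperelliptic involution through the purely inseparable extension $\ell/k$ of Proposition \ref{Lem2.4}, which deserves a sentence rather than a parenthesis.
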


\begin{rem} The conditions on $\operatorname{Jac}(C_k)$ imply that $E_k$ is not isotrivial,
(i.e. its $j$-invariant of $E_k$ does not belong to
$\overline{\F}_q$) and also that any factor of the Jacobian is a
Jacobian of an isotrivial curve {(as it happens, for example, by the
Jacobian of the Fermat curve $X^d+Y^d=Z^d$ over a global field of
positive characteristic, relatively prime with $d$, which is
isotrivial)}.
\end{rem}
\begin{rem}\label{rem1.11} To ensure that the degree two morphism from $C_k$ to
$E_k$ is also defined over $k$, it suffices to assume that
$g_C\geq6$ and then to follow the argument of \cite[Lemma 5]{HaSi},
provided that Castellnuovo's inequality holds over {the function
field extensions over a global field $k$ that are involved in
Castellnuovo's inequality (we mention that by the proof of
\cite[Lemma 5]{HaSi} all of these extensions are of degree $2$
in our situation)}. By \cite[Theorem 3. 11. 3]{Sti},
the inequality is true when $k$ is perfect, which is not the case.
However, in the proof of \cite[Theorem 3.11.3]{Sti} under the
assumption that $C_k$ is conservative, one only needs
to take the characteristic $p$ big enough so that inseparable
extensions do not appear (for example, to be greater than the
degrees of the function field extensions involved in Castellnuovo's
inequality). Consequently, under the hypothesis of Theorem
\ref{SchweizerI} {together with the assumptions} $g_C\geq 6$ and
$p>2$, we have $C_k$ is bielliptic over $k$.
\end{rem}

The next result is a weaker version of Theorem \ref{SchweizerI}.
\begin{thm}[Schweizer, version II]\label{SchweizerII}
Let $C_k$ be a smooth projective curve defined over a global field
$k$ of characteristic $p>0$, that is conservative over $k$. Under
the conditions that; $g_C\geq3$, and that the Jacobian variety
$\operatorname{Jac}(C_{k})$ over $\overline{k}$ has no non-zero
homomorphic images defined over $\F_q$, then, there exists a finite
field extension $L/k$ inside $\overline{k}$ such that
$\Gamma_2(C,L)$ is infinite if and only if $C_k$ is bielliptic or
hyperelliptic.
\end{thm}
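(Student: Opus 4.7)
\emph{Plan.} The implication $(\Leftarrow)$ follows immediately from results already in this section: if $C_k$ is hyperelliptic, the Corollary following Proposition \ref{Lem2.4} furnishes a finite $L/k$ with $\Gamma_2(C,L)$ infinite, and if $C_k$ is bielliptic, Proposition \ref{100} supplies the same conclusion (after a further finite extension that gives the elliptic quotient positive rank).

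For $(\Rightarrow)$, suppose $\Gamma_2(C,L_0)$ is infinite for some finite $L_0/k$ inside $\overline{k}$. I would first absorb a quadratic point into the base: pick any $P\in\Gamma_2(C,L_0)$, defined over some $L_0'/L_0$ with $[L_0':L_0]\leq 2$, and set $L:=L_0'$; then $C(L)\neq\emptyset$ and $\Gamma_2(C,L)$ remains infinite (it only grows under base change). Since $C_k$ is conservative, $g_{C_L}=g_C\geq 3$ and $C_L$ is conservative over $L$ as well. If $C_L$ is hyperelliptic over $L$, we are done, for then $C$ is geometrically hyperelliptic by definition. Otherwise, I would invoke Theorem \ref{SchweizerI} for $C_L$ over $L$: the hypotheses ``$g_{C_L}\geq 3$,'' ``$C(L)\neq\emptyset$,'' ``$C_L$ not hyperelliptic over $L$,'' and conservativity all hold; the conclusion is an elliptic curve $E_L\subset\operatorname{Jac}(C_L)$ of positive $L$-rank together with a degree-two $L$-morphism $C_L\to E_L$. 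Hence $C_L$ is bielliptic over $L$ and therefore $C$ is geometrically bielliptic.

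\emph{Main obstacle.} The one hypothesis of Theorem \ref{SchweizerI} that is not immediate is the Jacobian condition: Version I requires no non-zero homomorphic image of $\operatorname{Jac}(C_{\overline{k}})$ defined over $\overline{\F}_q=\overline{\F}_p$, whereas Version II only excludes $\F_q$-defined images. Bridging this gap is the heart of the argument. The natural approach is a Galois-averaging / descent construction: if $\operatorname{Jac}(C_{\overline{k}})$ admitted a non-zero quotient $A$ defined over some $\F_{q^n}$, one would combine the defining surjection $\operatorname{Jac}(C_{\overline{k}})\to A_{\overline{k}}$ with its Galois conjugates under $\operatorname{Gal}(\F_{q^n}/\F_q)$, and pass to the Weil restriction $\operatorname{Res}_{\F_{q^n}/\F_q}(A)$ (or a suitable $\F_q$-subquotient thereof) to produce a non-zero $\F_q$-defined homomorphic image of $\operatorname{Jac}(C_{\overline{k}})$, contradicting the hypothesis of Theorem \ref{SchweizerII}. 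Carrying out this descent cleanly --- tracking the $\operatorname{Gal}(\overline{k}/k)$-action on $A$ and on the quotient map, and avoiding the potential failure of Weil restriction to preserve the quotient (rather than sub-) structure --- is the principal technical step. Once the Jacobian condition is upgraded, Theorem \ref{SchweizerI} applies and the conclusion follows as above.
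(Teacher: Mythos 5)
The paper does not actually prove this theorem: it is stated as a known ``weaker version'' of Schweizer's result \cite[Theorem 5.1]{Sch} (Theorem \ref{SchweizerI}) with no argument supplied, so there is no in-paper proof to match your attempt against. Judged on its own, your $(\Leftarrow)$ direction is fine (the Corollary to Proposition \ref{Lem2.4} and Proposition \ref{100} do exactly this), and your reduction of $(\Rightarrow)$ to Theorem \ref{SchweizerI} after enlarging the base field to acquire a rational point is sound: conservativity and $g_C\geq 3$ persist, and since only \emph{geometric} biellipticity is claimed you do not need the Castelnuovo/degree-two-morphism refinement of Remark \ref{rem1.11}.

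The genuine gap is the one you flag yourself and then leave open: Theorem \ref{SchweizerI} requires that $\operatorname{Jac}(C)$ have no non-zero homomorphic image defined over $\overline{\F}_q$, while the hypothesis here only excludes images defined over $\F_q$, and your proposed bridge does not close. Concretely, if $A$ is a non-zero $\F_{q^n}$-defined quotient of $\operatorname{Jac}(C_{\overline{k}})$, the map you would build into $\operatorname{Res}_{\F_{q^n}/\F_q}(A)_{\overline{k}}\cong\prod_\sigma A^\sigma_{\overline{k}}$ need not be surjective, and what must be exhibited is a non-zero \emph{homomorphic image} defined over $\F_q$ --- i.e.\ the image of that map must itself descend to $\F_q$. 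An abelian subvariety of an $\F_q$-variety, taken over $\overline{k}$, need not be definable over $\F_q$: the image could be isogenous to a single conjugate factor $A^\sigma$, e.g.\ an elliptic curve with $j$-invariant in $\F_{q^n}\setminus\F_q$, which is not $\overline{k}$-isomorphic to any curve over $\F_q$. So the Weil-restriction averaging, as sketched, does not by itself upgrade ``no $\F_q$-images'' to ``no $\overline{\F}_q$-images''; one would need something like the Chow $\overline{\F}_q/\overline{k}$-image together with a Galois-descent argument for its field of definition, and none of that is carried out. I will add that the mismatch is quite possibly a typo in the paper --- Theorem \ref{thm3.3}, which invokes this statement, assumes the $\overline{\F}_q$ version of the Jacobian condition --- and if the intended hypothesis here is likewise ``no non-zero images over $\overline{\F}_q$,'' then your ``main obstacle'' disappears and the rest of your argument is a complete and correct deduction from Theorem \ref{SchweizerI}. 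As written against the literal statement, however, the key step is missing.
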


\subsection{Number fields}
Let us now assume that $k$ is a global field of characteristic zero,
that is, $k$ is a number field. If we follow the same line of discussion in
the proofs of Propositions
\ref{prop2.3}, \ref{Lem2.4} and \ref{100}, we deduce,
since inseparable extensions between $k$ and $\overline{k}$ do not
exist, that:
\begin{prop}
A smooth projective curve $C_k$ defined over a number field $k$ is
hyperelliptic with hyperelliptic involution $w$ such that $C/\langle w\rangle(k)\neq\emptyset$ if and only if it is hyperelliptic over
$k$. Also, for $g_C\geq6$, $C_k$ is bielliptic if and only if it is
bielliptic over $k$.
\end{prop}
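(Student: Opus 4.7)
The plan is to read this proposition as the number-field analog of Propositions \ref{Lem2.4} and \ref{100}, with the substantial simplification that number fields admit no non-trivial purely inseparable extensions; consequently any automorphism of $C_{\overline{k}}$ defined over such an extension is automatically defined over $k$, and the separable and algebraic closures of $k$ coincide.

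For the hyperelliptic statement, I first dispatch the \emph{if} direction: a degree-two $k$-morphism $C_k\to\PP^1_k$ trivially makes $C_k$ hyperelliptic and supplies $(C/\langle w\rangle)(k)\neq\emptyset$. For the \emph{only if} direction, Proposition \ref{prop2.3}(i) yields a unique hyperelliptic involution $w$ defined over a purely inseparable extension of $k$, hence over $k$ itself. I then transplant the Galois-cohomological twisting argument of Proposition \ref{Lem2.4} verbatim, working with $\Gal(\overline{k}/k)$ in place of $\Gal(\ell^{\operatorname{sep}}/\ell)$: from the $1$-cocycle $\sigma\mapsto\xi_\sigma\in\PGL_2(\overline{k})$ comparing $\pi$ with $^{\sigma}\pi$, the twisting machinery produces a conic $Q$ over $k$ and a degree-two $k$-morphism $\varphi:C_k\to Q$. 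The hypothesis $(C/\langle w\rangle)(k)\neq\emptyset$ translates through $\varphi_0$ to $Q(k)\neq\emptyset$, forcing $Q\cong_k\PP^1_k$ and giving hyperellipticity over $k$.

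For the bielliptic statement with $g_C\geq 6$, the \emph{if} direction is again trivial. For the converse, I invoke Proposition \ref{prop2.3}(ii): the bielliptic involution $\tilde{w}$ is unique, central in $\Aut(C_{\overline{k}})$, and defined over a purely inseparable extension of $k$. Uniqueness forces $\tilde{w}$ to be Galois-stable, and together with the number-field hypothesis this puts $\tilde{w}\in\Aut(C_k)$. Thus $C_k/\langle\tilde{w}\rangle$ is a genus-one curve over $k$ equipped with a canonical degree-two $k$-morphism from $C_k$.

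The delicate step, and the one I expect to be the main obstacle, is matching this last construction with the definition of bielliptic over $k$, which demands that the target be an elliptic curve over $k$—equivalently, that the genus-one quotient carry a $k$-rational point. In Proposition \ref{100} this rationality issue was circumvented only after a finite base change $L/k$. In the number-field setting I would argue that the $k$-rational point is supplied by the setup itself (for instance, a $k$-point of $C_k$ projects under $\pi$ to a $k$-point of the quotient), or alternatively that the intended target is the Jacobian of the genus-one quotient, which is automatically an elliptic curve over $k$. Pinning down which convention makes the rationality of the basepoint automatic is the one nontrivial point beyond the cohomological twisting mechanism already deployed in Proposition \ref{Lem2.4}.
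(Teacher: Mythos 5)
Your proposal follows exactly the route the paper intends: the paper gives no separate proof of this proposition, merely remarking that one repeats the arguments of Propositions \ref{prop2.3}, \ref{Lem2.4} and \ref{100}, using that a number field admits no non-trivial purely inseparable extensions, so the (unique, hence Galois-stable) involution descends to $k$ and the twisting cocycle lives in $\operatorname{H}^1(\Gal(\overline{k}/k),\PGL_2(\overline{k}))$. Your hyperelliptic half is complete and is precisely the paper's argument.

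The point you flag in the bielliptic half is a genuine one, and neither of your two suggested escapes closes it as stated. First, a $k$-point of $C_k$ would indeed project to a $k$-point of $C_k/\langle\tilde{w}\rangle$, but nothing in the hypotheses guarantees $C(k)\neq\emptyset$: by Faltings, $C(k)$ is a finite and possibly empty set. Observe that the hyperelliptic half of the statement carries the hypothesis $(C/\langle w\rangle)(k)\neq\emptyset$ precisely to handle this rationality issue, whereas the bielliptic half carries no analogous hypothesis. Second, replacing the genus-one quotient by its Jacobian does produce an elliptic curve over $k$, but not a degree-two $k$-morphism from $C_k$ to it, so it does not literally meet the paper's definition of \emph{bielliptic over $k$}. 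What the argument actually delivers for $g_C\geq 6$ is a degree-two $k$-morphism to a genus-one curve over $k$ (this is the content of \cite[Lemma 5]{HaSi}, cited in Remark \ref{rem1.11}); upgrading the target to an elliptic curve requires a $k$-rational point, or at least a $k$-rational divisor class of degree one, on the quotient, and the ramification divisor only supplies one of degree $2g_C-2$. The paper's one-line proof glosses over this in exactly the same way, so you have correctly reconstructed the intended argument and correctly isolated the one step that is not justified by it.
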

The next results are quite known to the specialists (they follow
from the arguments of Abramovich-Harris in \cite{abha}, or from
Harris-Silverman in \cite{HaSi}). One also can read a proof in
\cite{Ba1}.

%
%
%
%
%
%

 \begin{thm}\label{AbramovichHarrisSilvermanI} \mbox{}
Let $C_k$ be a smooth projective curve over a number field $k$ with $g_C\geq 2$. Hence, $\Gamma_2(C,k)$ is an infinite set if and only if $C_k$ is
hyperelliptic over $k$ or bielliptic over $k$, such that it exists a degree two $k$-morphism form $C_k$ to an elliptic curve $E_k$ of positive rank over $k$.
\end{thm}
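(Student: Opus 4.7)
The plan is to prove each direction separately. For the $(\Leftarrow)$ direction, if $C_k$ is hyperelliptic over $k$, a degree two $k$-morphism $\pi\colon C_k\to\mathbb{P}^1_k$ exists, and since $\mathbb{P}^1(k)$ is infinite each fibre $\pi^{-1}(P)$ with $P\in\mathbb{P}^1(k)$ contributes either two $k$-points or a genuine quadratic point to $\Gamma_2(C,k)$, yielding infinitely many of them. If instead $C_k$ is bielliptic over $k$ via $\pi\colon C_k\to E_k$ with $E_k$ of positive rank, Mordell--Weil gives $E(k)$ infinite and pulling back along $\pi$ again produces infinitely many quadratic points.

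For the $(\Rightarrow)$ direction, I would first reinterpret quadratic points as $k$-rational points of the symmetric square $C^{(2)}$: any point of $\Gamma_2(C,k)$ together with its Galois conjugate defines a $k$-rational effective divisor of degree two, i.e., an element of $C^{(2)}(k)$, and conversely. Since Faltings' theorem forces $C(k)$ to be finite, infinitude of $\Gamma_2(C,k)$ forces $C^{(2)}(k)$ to be infinite. Fix a base point in $C^{(2)}(k)$ and consider the Abel--Jacobi morphism $\phi\colon C^{(2)}\to\operatorname{Jac}(C)$. If $\phi$ is not injective, its contracted fibre is a $\mathbb{P}^1$ coming from a $g^1_2$, so $C_k$ is hyperelliptic; by Proposition \ref{prop2.3} the hyperelliptic involution is unique in characteristic zero and hence Galois-stable, making $C_k$ hyperelliptic over $k$.

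Otherwise $\phi$ is a closed immersion and $W:=\phi(C^{(2)})\subset\operatorname{Jac}(C)$ is a surface defined over $k$ with $W(k)$ infinite. Faltings' theorem on rational points of subvarieties of abelian varieties then places $W(k)$ inside a finite union of translates $v_i+B_i$ of abelian subvarieties $B_i\subset\operatorname{Jac}(C)$ contained in $W$; infinitude of $W(k)$ forces at least one $B_i$ to be positive-dimensional. The case $\dim B_i=2$ is excluded because a surface birational to $C^{(2)}$ cannot be an abelian variety for $g_C\geq 3$, so some $B=B_i$ is an elliptic curve. Analysing the incidence correspondence $I=\{(P,D)\in C\times(v+B):P\in D\}$ produces a non-constant map from $v+B$ (or its double cover $I$) to $C$, giving either the hyperelliptic $g^1_2$ once more or a degree two morphism from $C$ onto an elliptic $k$-form of $B$.

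The main obstacle will be the descent step: ensuring that the elliptic curve $E_k$ and the degree two morphism obtained abstractly from Faltings' statement are defined over $k$. The absolute Galois group of $k$ permutes the finite family $\{v_i+B_i\}$, so for $g_C\geq 6$ uniqueness of the bielliptic involution (Proposition \ref{prop2.3}) trivialises the Galois action on the relevant factor; for $3\leq g_C\leq 5$ the finiteness of the family together with the intrinsicality of the Faltings locus still forces at least one translate, and hence its underlying elliptic curve, to be Galois-stable, possibly after a twist argument. Once a degree two $k$-morphism $C_k\to E_k$ is in hand, the infinite supply of quadratic points on $C$ projects to infinitely many $k$-points of $E_k$, forcing $E_k$ to have positive rank by Mordell--Weil and completing the proof.
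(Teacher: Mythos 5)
The paper does not actually prove this theorem: it states that the result ``follows from the arguments of Abramovich--Harris in \cite{abha}, or from Harris--Silverman in \cite{HaSi}'' and points to \cite{Ba1} for a written proof. Your outline is precisely the standard route from those references (symmetric square, Abel--Jacobi, Faltings on subvarieties of abelian varieties, incidence correspondence), so in that sense you are reconstructing the intended argument rather than diverging from it. The backward direction and the reduction from $\Gamma_2(C,k)$ to $C^{(2)}(k)$ are fine, and your descent idea for the elliptic translate is essentially correct and can be made clean for all genera: a translate $v+B$ carrying Zariski-dense $k$-points is fixed by $\operatorname{Gal}(\overline{k}/k)$, hence defined over $k$, and having a $k$-point it \emph{is} an elliptic curve over $k$ of positive rank; no appeal to $g_C\geq 6$ or to uniqueness of the bielliptic involution is needed there.

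There are, however, two concrete gaps. First, in the non-injective branch you infer ``the hyperelliptic involution is unique, hence Galois-stable, making $C_k$ hyperelliptic over $k$.'' That inference is false, and the paper itself is at pains to make the distinction: by Proposition \ref{Lem2.4} a Galois-stable hyperelliptic involution only yields a degree-two $k$-map to a \emph{conic}, which is $\mathbb{P}^1_k$ only when it has a $k$-point (see also the unlabelled proposition in \S 1.2). The correct way to run this branch is to observe that the contracted fibre $F\subset C^{(2)}$ of $\phi$ is a genus-zero curve over $k$; if infinitely many of the $k$-points of $C^{(2)}$ lie on $F$ then $F(k)$ is infinite, so $F\cong\mathbb{P}^1_k$ and $C_k$ is hyperelliptic \emph{over $k$}; otherwise infinitely many $k$-points survive in $\phi(C^{(2)})$ and you must continue into the Faltings/elliptic-curve branch even though $C$ is geometrically hyperelliptic. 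Your dichotomy ``$\phi$ not injective $\Rightarrow$ hyperelliptic over $k$; $\phi$ injective $\Rightarrow$ elliptic branch'' therefore mishandles exactly the curves for which the quotient conic is pointless. Second, the step ``analysing the incidence correspondence \ldots gives either the hyperelliptic $g^1_2$ or a degree two morphism from $C$ onto an elliptic $k$-form of $B$'' is the actual content of the key lemma of \cite{abha} and \cite{HaSi} (one must show the projection $I\to C$ is birational unless $C$ is hyperelliptic, via a Castelnuovo--Severi type bound); as written it is an assertion, not an argument. Relatedly, your closing sentence that quadratic points ``project to infinitely many $k$-points of $E_k$'' needs the group law (a quadratic point gives a $k$-rational degree-two divisor on $E_k$, whose sum is a $k$-point), and it is cleaner to note that positivity of the rank is already supplied by the infinitude of $k$-points on the translate $v+B$ itself.
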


The next result in \cite{HaSi} is a weaker version of Theorem
\ref{AbramovichHarrisSilvermanI}, not {controlling} the base field
for quadratic points:
\begin{thm}[Harris-Silverman]\label{AbramovichHarrisSilvermanII}
Let $C_k$ be a smooth projective curve over a
number field $k$ with $g_C\geq 2$. Hence, there exists a finite field extension $L/k$ inside $\overline{k}$ such that
$\Gamma_2(C,L)$ is an infinite set if and only if $C_k$ is
hyperelliptic or bielliptic.
\end{thm}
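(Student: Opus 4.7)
The plan is to reduce this weaker, ``$k$-uncontrolled'' statement to the stronger Theorem \ref{AbramovichHarrisSilvermanI} by base-changing from $k$ to the extension $L$, exploiting the fact that, by the paper's convention, being hyperelliptic or bielliptic is a geometric property of $C_{\overline{k}}=C_L\otimes_L\overline{k}$ and is therefore invariant under passage between $k$ and any finite extension $L\subset\overline{k}$.

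For the implication $(\Leftarrow)$, suppose first that $C_k$ is hyperelliptic. Since $k$ is a number field there are no non-trivial purely inseparable extensions, so the uniqueness part of Proposition \ref{prop2.3}(i) shows that the hyperelliptic involution $w$ is defined over some finite extension $L_0/k$. Enlarging $L_0$ to a finite $L$ for which the genus-zero quotient $C_L/\langle w\rangle$ acquires an $L$-point, the number-field analogue of Proposition \ref{Lem2.4} gives a degree-two $L$-morphism $C_L\to\mathbb{P}^1_L$, and pulling back the infinitely many $L$-points of $\mathbb{P}^1_L$ produces infinitely many elements of $\Gamma_2(C,L)$. If instead $C_k$ is bielliptic, the argument used for Proposition \ref{100} carried out over a number field (with no purely inseparable gymnastics) yields a finite $L_0/k$ and a degree-two $L_0$-morphism $C_{L_0}\to E$ to some elliptic curve $E/L_0$. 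Choosing a non-torsion point $P\in E(\overline{k})$ and setting $L=L_0(P)$ then makes $E(L)$ infinite, and its pullback supplies infinitely many quadratic points of $C$ over $L$.

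For the implication $(\Rightarrow)$, if some finite $L/k\subset\overline{k}$ makes $\Gamma_2(C,L)$ infinite, view $C_L$ as a smooth projective curve over the number field $L$; it has the same geometric genus $g_C\geq 2$ and $\Gamma_2(C_L,L)=\Gamma_2(C,L)$ is infinite. Theorem \ref{AbramovichHarrisSilvermanI} applied to $C_L/L$ therefore produces a degree-two $L$-morphism from $C_L$ to either $\mathbb{P}^1_L$ or to an elliptic curve $E/L$ of positive rank over $L$. Base-changing this morphism to $\overline{k}$ exhibits $C_{\overline{k}}$ as hyperelliptic or bielliptic, which, by the paper's geometric definition, means $C_k$ itself is hyperelliptic or bielliptic.

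No substantive obstacle is expected: once Theorem \ref{AbramovichHarrisSilvermanI} is taken as available over any number field, the present statement reduces to a formal descent/ascent argument. The only real subtlety is keeping track of the three notions ``hyperelliptic'', ``hyperelliptic over $k$'' and ``hyperelliptic over $L$'' (and likewise for bielliptic) deployed throughout the paper, and in particular remembering that in the $(\Leftarrow)$ direction one may need to enlarge the base field twice — first to define the involution, then to realise a rational point on the quotient (for the hyperelliptic case) or to increase the Mordell–Weil rank of $E$ (for the bielliptic case).
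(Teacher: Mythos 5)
Your proposal is correct: the paper does not actually prove this theorem (it cites Harris--Silverman and Abramovich--Harris, points to \cite{Ba1} for a written proof, and explicitly presents the statement as the weaker, base-field-uncontrolled version of Theorem \ref{AbramovichHarrisSilvermanI}), and your ascent/descent reduction to Theorem \ref{AbramovichHarrisSilvermanI} applied over the enlarged number field $L$ is exactly the intended relationship between the two statements. The only small simplification worth noting is that over a number field the purely inseparable extension in Proposition \ref{prop2.3}(i) is trivial, so the hyperelliptic involution is already defined over $k$ itself and in that case you only need to enlarge the field once, namely to give the genus-zero quotient a rational point.
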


\end{section}
\begin{section}{Bielliptic smooth plane curves and their quadratic points}
Let $k$ be a field and $k\subseteq
L\subseteq\overline{k}$ be a field extension. A curve $C$ is said to
be a \emph{smooth $(k,L)$-plane curve of degree $d$} or
equivalently, $L$ is a \emph{plane model field of definition for
$C$}, if $C$ as a smooth projective curve is defined over $k$ such
that $C_L:=C\otimes_kL$ admits a non-singular plane model over $L$,
that is, $C_L$ is $L$-isomorphic to a non-singular homogenous projective
polynomial equation $F(X,Y,Z)=0$ of degree $d$ with coefficients in
$L$. When $\overline{k}$ is a plane model field of definition for
$C$, then $C_{\overline{k}}$ has a unique $g^2_d$-linear series
modulo $\operatorname{PGL}_3(\overline{k})$-conjugation, which
allows us to embed $C_{\overline{k}}$ into
$\mathbb{P}^2_{\overline{k}}$ as a non-singular plane model over
$\overline{k}$ of degree $d$ for some $d$ (in this case,
$g:=g_C=(d-1)(d-2)/2$). Also, we call $C$ a \emph{smooth plane curve
of degree $d$ over $k$} when $k$ itself is plane model field of definition
for $C$.

The present authors and et al. addressed in \cite[\S 2]{BBL} the problem where non-singular plane models of smooth projective curves (also of their twists) are defined. For instance, we showed:
\begin{thm}[Badr-Bars-Garc\'{\i}a]\label{thmfields}
Given a smooth $(k,k^{\operatorname{sep}})$-plane curve $C$ of degree $d\geq4$, it
does not necessarily have a non-singular plane model defined over
the field $k$. However, it does in any of the following cases; if $d$ is coprime with $3$, if $C(k)\neq\emptyset$, or if the $3$-torsion $\operatorname{Br}(k)[3]$ of the Brauer group $\operatorname{Br}(k)$ is trivial.
In general, $C$ is a smooth $(k,L)$-plane curve of degree $d$ for some cubic Galois field extension $L/k$. \end{thm}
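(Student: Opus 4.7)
The plan is to construct, from the given $(k,k^{\operatorname{sep}})$-plane model of $C$, a Brauer--Severi variety $V$ of dimension $2$ over $k$ together with a closed $k$-immersion $C\hookrightarrow V$ whose base-change to $k^{\operatorname{sep}}$ recovers the plane embedding, and then to ask when $V\cong\mathbb{P}^2_k$. The starting point is the uniqueness (up to $\PGL_3(\overline{k})$-conjugation) of the $g^2_d$ on $C_{\overline{k}}$ for $d\geq 4$, recalled at the start of the section. By uniqueness, for each $\sigma\in\Gal(k^{\operatorname{sep}}/k)$ the conjugate plane model differs from the original by some $g_\sigma\in\PGL_3(k^{\operatorname{sep}})$, and $\sigma\mapsto g_\sigma$ defines a $1$-cocycle in $\operatorname{H}^1(\Gal(k^{\operatorname{sep}}/k),\PGL_3(k^{\operatorname{sep}}))$. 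Twisting theory for varieties (as in \cite[III.1]{Se}) then produces the desired $V$ and the $k$-immersion $C\hookrightarrow V$. A smooth plane model of $C$ over $k$ is equivalent to $V$ being $k$-isomorphic to $\mathbb{P}^2_k$, i.e., to the triviality of the Brauer class $[V]\in\operatorname{Br}(k)$; the short exact sequence $1\to\mu_3\to\SL_3\to\PGL_3\to 1$ together with $\operatorname{H}^1(k,\SL_3)=0$ shows that $[V]\in\operatorname{Br}(k)[3]$.

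Next I would verify each of the three sufficient conditions. The hypothesis $\operatorname{Br}(k)[3]=0$ is immediate. If $C(k)\neq\emptyset$, then composing with $C\hookrightarrow V$ yields a $k$-point on $V$, and a Brauer--Severi variety with a $k$-rational point is split. The case $\gcd(d,3)=1$ is the most substantive: let $\mathcal{L}$ denote the pullback of $\mathcal{O}_V(1)$ to $C_{k^{\operatorname{sep}}}$, so $\deg\mathcal{L}=d$, and adjunction on a plane model over $\overline{k}$ gives $K_C=(d-3)\mathcal{L}$ in $\Pic(C_{\overline{k}})$. The low-degree Leray/Amitsur sequence
\[
0\to\Pic(C)\to\Pic(C_{\overline{k}})^{\Gal(\overline{k}/k)}\xrightarrow{\partial}\operatorname{Br}(k)
\]
is functorial under pullback along $C\to V$, and the analogous sequence for $V$ sends $\mathcal{O}_V(1)\mapsto[V]$; hence $\partial(\mathcal{L})=[V]$. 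Since $K_C$ is always $k$-rational, $\partial(K_C)=0$ yields $(d-3)[V]=0$, and combined with $3[V]=0$ this forces $[V]=0$ whenever $\gcd(d-3,3)=\gcd(d,3)=1$.

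For the final clause I would invoke Wedderburn's theorem that every central simple algebra of degree $3$ is cyclic: the class $[V]$ has order dividing $3$, so it is represented by a cyclic algebra $(L/k,\sigma,a)$ with $L/k$ cyclic (in particular Galois) of degree $3$; such an $L$ splits $V$, whence $V_L\cong\mathbb{P}^2_L$ and base-changing $C\hookrightarrow V$ to $L$ exhibits $C$ as a smooth $(k,L)$-plane curve of degree $d$ with $L/k$ cubic Galois. The initial negative assertion, that a plane model need not descend to $k$, is then witnessed by any concrete choice of $C$ whose cocycle $(g_\sigma)$ realises a non-trivial element of $\operatorname{Br}(k)[3]$.

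The hard part will be the identification $\partial(\mathcal{L})=[V]$ used in the case $\gcd(d,3)=1$: one must match the $\PGL_3$-twist cocycle defining $V$ with the obstruction cocycle of $\mathcal{L}$ in $\operatorname{H}^2(k,\mathbb{G}_m)$ and verify compatibility under the pullback $C\to V$. The remaining ingredients are standard inputs, namely uniqueness of the $g^2_d$ for smooth plane curves of degree $\geq 4$, Wedderburn's cyclicity theorem in degree $3$, and Hilbert $90$ for $\SL_n$, together with the familiar fact that a Brauer--Severi variety with a rational point is split.
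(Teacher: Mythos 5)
The paper does not actually prove this statement: it is quoted from \cite[\S 2]{BBL} with no argument given here, so there is no internal proof to compare against. Your Brauer--Severi reconstruction is, however, essentially the argument of \cite{BBL}: twist $\mathbb{P}^2$ by the $\PGL_3(k^{\operatorname{sep}})$-valued cocycle coming from uniqueness of the $g^2_d$, land the obstruction in $\operatorname{Br}(k)[3]$, split it by a rational point (Ch\^atelet), by triviality of $\operatorname{Br}(k)[3]$, or by the coprimality argument $(d-3)[V]=3[V]=0$ via descent of $\omega_C$, and invoke Wedderburn's cyclicity theorem in degree $3$ for the cubic Galois splitting field. The step you flag as hard, $\partial(\mathcal{L})=[V]$, is standard (functoriality of the Hochschild--Serre sequence for $\mathbb{G}_m$ together with the classical computation of $\Pic$ of a Brauer--Severi variety), so the positive assertions are complete. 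The one genuine omission is the opening clause ``it does not necessarily have a non-singular plane model over $k$'': saying it is ``witnessed by any $C$ whose cocycle realises a non-trivial element of $\operatorname{Br}(k)[3]$'' presupposes that such a curve exists, and exhibiting one (as \cite{BBL} does with an explicit example) is a separate construction, not a formal consequence of the framework you set up.
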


It is a basic fact in algebraic geometry that a smooth
$(k,\overline{k})$-plane curve of degree $d\geq4$ is
non-hyperelliptic. On the other hand, the (geometric) gonality of a
smooth projective curve $C$ is defined to be the minimum degree of a
$\overline{k}$-morphism from $C_{\overline{k}}$ to the projective
line $\mathbb{P}^1_{\overline{k}}$. For the special case of smooth
$(k,\overline{k})$-plane curves $C$ of degree $d\geq4$, the gonality
equals to $d-1$ (cf. Namba \cite{Na} for zero characteristic and
Homma \cite{Ho} for positive characteristic, also one can read the
assertion in \cite[p. 341]{Yoshihara}). Consequently, when $d\geq6$,
the gonality of $C$ is at least $5$, and $C$ can
not be bielliptic. When $d=5$, we use the fact that
$C_{\overline{k}}$ has an automorphism $\tilde{w}$ of order $2$ (a
bielliptic involution) that also fixes $2g-2=10$ points on
$C_{\overline{k}}$. Following the techniques in \cite{BaBa2016IA}, we may assume (up to
$\operatorname{PGL}_3(\overline{k})$-conjugation) that
$\tilde{w}:X\mapsto X,\,Y\mapsto Y, Z\mapsto-Z$ and
$C_{\overline{k}}:Z^4L_{1,Z}+Z^2L_{2,Z}+L_{5,Z}$, where $L_{i,Z}$
denotes a homogenous degree $i$, binary form in $X,Y$. Thus,
$\tilde{w}$ fixes exactly $d+1=6<10$ points on $C_{\overline{k}}$, a
contradiction, and $C$ can not be bielliptic. This proves the
result:

\begin{prop}\label{nonhypergenus}
A smooth $(k,\overline{k})$-plane curve $C$ of degree $d\geq5$ is neither hyperelliptic nor bielliptic. Also, $C$ is never hyperelliptic when $d=4$.
\end{prop}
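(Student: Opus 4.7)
The plan is to handle the hyperelliptic and bielliptic assertions separately, using the fact recalled just before the statement that the gonality of a smooth $(k,\overline{k})$-plane curve of degree $d\geq 4$ equals $d-1$. Since $d-1\geq 3$, there is no degree two $\overline{k}$-morphism from $C_{\overline{k}}$ to $\mathbb{P}^1_{\overline{k}}$, so $C$ is never hyperelliptic when $d\geq 4$. This takes care of the hyperelliptic claim in both the $d=4$ and $d\geq 5$ cases.

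For the bielliptic claim when $d\geq 6$, I would argue by contradiction. If $C$ were bielliptic, then composing a degree two $\overline{k}$-morphism $C_{\overline{k}}\to E$ onto an elliptic curve $E$ with the quotient $E\to E/\langle [-1]\rangle\cong\mathbb{P}^1_{\overline{k}}$ produces a degree four $\overline{k}$-morphism $C_{\overline{k}}\to\mathbb{P}^1_{\overline{k}}$. This forces the gonality of $C$ to be at most $4$, contradicting $d-1\geq 5$.

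The remaining and most delicate case is $d=5$, where $g_C=6$ and the gonality bound $d-1=4$ does not exclude biellipticity. Here I would assume the existence of a bielliptic involution $\tilde w\in\Aut(C_{\overline{k}})$, which by Proposition~\ref{prop2.3} fixes exactly $2g_C-2=10$ points on $C_{\overline{k}}$. Invoking the normal-form analysis for automorphisms of smooth plane curves from \cite{BaBa2016IA}, I would conjugate $\tilde w$ in $\PGL_3(\overline{k})$ into the diagonal form $(X:Y:Z)\mapsto(X:Y:-Z)$; collecting the $\tilde w$-invariant monomials by parity in $Z$ then shows that, up to this conjugation, $C_{\overline{k}}$ is cut out by $Z^4L_{1,Z}+Z^2L_{2,Z}+L_{5,Z}=0$ where $L_{i,Z}$ is a binary form of degree $i$ in $X,Y$.

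The proof concludes by counting fixed points on the curve. The fixed locus of $\tilde w$ in $\mathbb{P}^2_{\overline{k}}$ is the union of the line $\{Z=0\}$ and the isolated point $(0:0:1)$. The line meets $C_{\overline{k}}$ in the $d=5$ zeros of $L_{5,Z}$, while $(0:0:1)$ always lies on $C_{\overline{k}}$ since $L_{1,Z}(0,0)=0$ (as $L_{1,Z}$ is a linear form in $X,Y$). This yields exactly $d+1=6$ fixed points on $C_{\overline{k}}$, strictly fewer than the required $10$, which is the desired contradiction. The main obstacle is really just the $d=5$ step: one must justify that the diagonalization over $\overline{k}$ is available and that the monomial-parity restriction on the defining equation is valid, after which the fixed-point count becomes a brief Bezout-type computation.
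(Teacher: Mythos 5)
Your argument coincides with the paper's own proof: non-hyperellipticity for $d\geq 4$ and the exclusion of biellipticity for $d\geq 6$ both follow from the gonality being $d-1$ (with the degree-four map $C_{\overline{k}}\to E\to E/\langle[-1]\rangle$ made explicit), and the $d=5$ case is ruled out by the same diagonalized involution, the same parity normal form, and the same fixed-point count $\leq d+1=6<10$. The only blemish is that the middle binary form in the degree-$5$ normal form must have degree $3$, not $2$, for $Z^4L_{1,Z}+Z^2L_{3,Z}+L_{5,Z}$ to be homogeneous of degree $5$ (the paper's text has the same slip), and this does not affect the argument.
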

\subsection{Smooth plane quartic curves, which are bielliptic}
Next, we characterize smooth $(k,\overline{k})$-plane quartic curves that might be bielliptic, hence, $\Gamma_2(C,L)$ is infinite for some finite field extension $L/k$ inside $\overline{k}$.

Let $\mathcal{M}_g$ be the (coarse) moduli space, representing
$\overline{k}$-isomorphism classes of smooth curves of genus $g$,
and define the substratum
$\widetilde{\mathcal{M}_g^{\operatorname{Pl}}}(G)\subset\mathcal{M}_g$,
where $G$ is a finite non-trivial group, whose $\overline{k}$-points are $\overline{k}$-isomorphism
classes of smooth plane curves $[C]$, such that $\operatorname{Aut}(C_{\overline{k}})$
is $\operatorname{PGL}_3(\overline{k})$-conjugated to $\varrho(G)$,
for some injective representation
$\varrho:G\hookrightarrow\operatorname{PGL}_3(\overline{k})$.

Lercier-Ritzenthaler-Rovetta-Sijsling in
\cite[\S2]{Ritothers} introduced the notions of complete, finite and representative families for strata of the moduli space $\mathcal{M}_g$
 when $k$ is {any field of characteristic $p=0$ or $p>2g+1$}.
 In the case of plane curves, a family $\mathcal{C}$ of smooth plane curves over $k$ is said to be \emph{complete over $k$ for $\widetilde{\mathcal{M}_g^{\operatorname{Pl}}}(G)$} if,
 for any algebraic extension $k'/k$ inside $\overline{k}$ and any $k'$-point $[C]/k'$ in the stratum $\widetilde{\mathcal{M}_g^{\operatorname{Pl}}}(G)$,
 there exists a non-singular plane model for $C$ defined over $k'$ in the family $\mathcal{C}$, moreover if such a model is always unique, then the family $\mathcal{C}$
 is \emph{representative over $k$ for $\widetilde{\mathcal{M}_g^{\operatorname{Pl}}}(G)$}. A family $\mathcal{C}$ is \emph{geometrically complete} (resp. \emph{geometrically representative})
 if $\mathcal{C}\otimes_k\overline{k}$ is complete (resp. representative) over $\overline{k}$.
%
%
%
%

\begin{thm}[Bielliptic quartic curves]\label{biellipticquartics} Let $C$ be {a smooth $(k,\overline{k})$-plane quartic curve over a field $k$} of characteristic $p=0$ or
$p>7$ \footnote{In case that $k$ is a perfect field, Theorem \ref{thmfields} allows us
to always assume that $C$ is a smooth plane quartic curve over the base field $k$, since $d=4$ is coprime with $3$.}. Then, $C$ is bielliptic if and only if
$C\otimes_k\overline{k}$ is isomorphic to a non-singular plane model of
the form $Z^4+Z^2L_{2,Z}+L_{4,Z}=0$, where $L_{i,Z}$ is a homogenous
binary form in $\overline{k}[X,Y]$ of degree $i$ (in this case,
$\tilde{w}:X\mapsto X,\,Y\mapsto Y,\,Z\mapsto -Z$ is a bielliptic
involution).

Table \ref{Hennclassification} below gives a geometrically complete
families for each substrata (in $\operatorname{GAP}$ library
\cite{gap} notation) of $\mathcal{M}_3$ of smooth plane quartic
curves over $\overline{k}$, which are bielliptic.

\scriptsize
\begin{center}
\begin{table}[!th]
  \renewcommand{\arraystretch}{1.3}
  \caption{Bielliptic geometrically complete classification}
  \label{Hennclassification}
 \vspace{4mm} 
  \centering
\begin{tabular}{|c|c|c|}
  \hline
  $\operatorname{Aut}(C_{\overline{k}})$ & Families & Restrictions \\
                                     &       &              \\\hline\hline

  $\Z/2\Z$& $Z^4+Z^2L_{2,Z}(X,Y)+L_{4,Z}(X,Y)$ & $L_{2,Z}(X,Y)\neq0,\,not\, below$   \\\hline
  $\Z/2\Z\times \Z/2\Z$& $Z^4+Z^2(bY^2+cX^2)+(X^4+Y^4+aX^2Y^2)$& $a\neq\pm b\neq c\neq\pm a$    \\\hline
    $\Z/{6}\Z$& $Z^4+aZ^2Y^2+(X^3Y+Y^4)$ & $a\neq0$    \\\hline
$\mathbf{\operatorname{\mathbf{S}}_3}$ &
$(X^3+Y^3)Z+X^2Y^2+aXYZ^2+bZ^4$& $a\neq b,\,ab\neq0$    \\\hline
$\operatorname{D}_4$& $Z^4+bXYZ^2+(X^4+Y^4+aX^2Y^2)$&
$b\neq0,\pm\frac{2a}{\sqrt{1-a}}$    \\\hline
  $\operatorname{GAP}(16,13)$& $Z^4+(X^4+Y^4+aX^2Y^2)$& $\pm a\neq0,2,6,2\sqrt{-3}$   \\\hline
  $\operatorname{S}_4$ & $Z^4+aZ^2(Y^2+X^2)+(X^4+Y^4+aX^2Y^2)$ & $a\neq0,\frac{-1\pm\sqrt{-7}}{2}$  \\\hline
    $\operatorname{GAP}(48,33)$ & $Z^4+\left(X^4+Y^4+(4\zeta_3+2)X^2Y^2\right)$ & $-$    \\\hline
  $\operatorname{GAP}(96,64)$& $Z^4+(X^4+Y^4)$& $-$   \\\hline
$\mathbf{\operatorname{\mathbf{PSL}}_2(\mathbb{F}_7)}$&
$X^3Y+Y^3Z+Z^3X$& $-$   \\\hline
 \end{tabular}
\end{table}
\end{center}
\normalsize

The algebraic restrictions for the parameters over $\overline{k}$, in the last column,
are taken so that the defining equation is non-singular and has no
bigger automorphism group. For example, the term ``not below'' means to
assume more restrictions for no larger automorphism group to occur.
\end{thm}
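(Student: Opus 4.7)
The plan is to prove the geometric characterization first, and then normalize within each stratum using the Henn-type classification of automorphism groups of smooth plane quartics.

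For the \emph{characterization}, since $C$ is a smooth plane quartic of genus $g_C=3$, the hyperplane class agrees with the canonical class ($K_C=(d-3)H=H$ for $d=4$); hence every element of $\Aut(C_{\overline{k}})$ extends to an automorphism of the ambient $\mathbb{P}^2_{\overline{k}}$, that is, is induced by some $M\in\PGL_3(\overline{k})$. By Proposition \ref{prop2.3}, $C$ is bielliptic iff $\Aut(C_{\overline{k}})$ contains an involution $\tilde{w}$ with exactly $2g_C-2=4$ fixed points. Since $p\neq 2$, any such $\tilde{w}$ is diagonalizable in $\PGL_3(\overline{k})$, and (up to $\PGL_3(\overline{k})$-conjugation, which does not change the $\overline{k}$-isomorphism class) we may take $\tilde{w}:(X:Y:Z)\mapsto(X:Y:-Z)$. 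Imposing $\tilde{w}$-invariance on the defining quartic $F$ forces either $F=Z^4+Z^2L_{2,Z}(X,Y)+L_{4,Z}(X,Y)$ (after rescaling $Z$), or $F=Z(Z^2L_{1,Z}+L_{3,Z})$, the latter being divisible by $Z$ and therefore singular. The line $Z=0$ then meets $C$ in the four roots of $L_{4,Z}(X,Y)=0$ (distinct by smoothness of $C$), while the isolated fixed point $(0:0:1)$ of $\tilde{w}$ on $\mathbb{P}^2_{\overline{k}}$ satisfies $F(0,0,1)=1$ and is thus not on $C$. Hence $\tilde{w}|_C$ has exactly four fixed points, so it is a bielliptic involution.

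For the \emph{table}, the plan is to refine the normal form $F=Z^4+Z^2L_{2,Z}+L_{4,Z}$ inside each stratum $\widetilde{\mathcal{M}_3^{\operatorname{Pl}}}(G)$ that contains a bielliptic involution, using the classification of automorphism groups of smooth plane quartics in characteristic $0$ or $p>7$ (Henn's classification, cf. \cite{BaBa2016IA}). For each $G$ listed, one lifts a generator of $G$ (beyond $\tilde{w}$) to $\PGL_3(\overline{k})$, forces it to stabilize $F$, and solves the resulting linear and polynomial constraints on $(L_{2,Z},L_{4,Z})$; then one normalises further by the residual change-of-coordinates subgroup that commutes with $\tilde{w}$, namely $\GL_2(\overline{k})$ acting on $(X,Y)$ together with a scalar on $Z$. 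For example, in the $\Z/6\Z$-stratum an added order-$3$ element forces $L_{2,Z}$ to be a scalar multiple of $Y^2$ and $L_{4,Z}$ to have the shape $\lambda X^3Y+\mu Y^4$, which after $\GL_2$-normalization collapses to the displayed form; the $\PSL_2(\mathbb{F}_7)$-case reduces to the Klein quartic $X^3Y+Y^3Z+Z^3X$, whose bielliptic involutions inside $\PSL_2(\mathbb{F}_7)$ become diagonal after a suitable projective change of coordinates.

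The restrictions in the last column of Table \ref{Hennclassification} are obtained by (i) computing the discriminant of each family to exclude parameter values giving singular models, and (ii) requiring that $\Aut(C_{\overline{k}})$ not enlarge (such an enlargement would place the member in a smaller stratum already handled further down the table). The main obstacle is the case-by-case computational bookkeeping in the larger strata, notably $\operatorname{S}_4$, $\operatorname{GAP}(48,33)$, $\operatorname{GAP}(96,64)$ and $\PSL_2(\mathbb{F}_7)$: one must verify that each such $G$ admits a faithful representation in $\PGL_3(\overline{k})$ stabilizing some curve of the shape $Z^4+Z^2L_{2,Z}+L_{4,Z}$ and containing an order-$2$ element with exactly four fixed points on $C$, while simultaneously checking that the remaining groups in Henn's list (such as $\Z/3\Z$ or $\Z/9\Z$) are correctly discarded by the absence of any such bielliptic involution.
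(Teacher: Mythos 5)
Your proposal is correct and follows essentially the same route as the paper: reduce a bielliptic involution to $\operatorname{diag}(1,1,-1)$ via the canonical (linear) action on $\mathbb{P}^2_{\overline{k}}$, use $\tilde{w}$-invariance plus smoothness to force the normal form $Z^4+Z^2L_{2,Z}+L_{4,Z}$ (discarding $Z(Z^2L_{1,Z}+L_{3,Z})$ as reducible), and defer the table to Henn's stratification and the completeness results of Lercier--Ritzenthaler--Rovetta--Sijsling and Lorenzo. The only cosmetic difference is that you certify biellipticity by counting the four fixed points on $Z=0$ and invoking Proposition \ref{prop2.3}, where the paper applies Riemann--Hurwitz to the quotient map; these are the same computation.
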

\begin{rem}
The two highlighted cases in Table \ref{Hennclassification} are not
in the prescribed form $Z^4+Z^2L_{2,Z}+L_{4,Z}=0$, that is
$\operatorname{diag}(1,1,-1)$ is not a bielliptic involution.
However, they do up to
$\operatorname{PGL}_3(\overline{k})$-conjugation. The way to this is
simple; any $3\times3$ projective linear transformation of order two
of the plane fixes (pointwise) a line $\mathcal{L}\subset\mathbb{P}^2_{\overline{k}}$,
called its axis. So, we may ask for a change of variables $\phi$
such that the transformed $\mathcal{L}$ becomes the projective line $Z=0$.
For example, for $G=\operatorname{S}_3$, we may apply the change of
variables
$$\left(
   \begin{array}{ccc}
     1 & -1 & 1 \\
     1 & -1 & -1 \\
     0 & 2 & 0 \\
   \end{array}
 \right)
,$$ moving the axis $\mathcal{L}:X-Y=0$ of $\phi:X\leftrightarrow Y,
Z\mapsto Z$ to $Z=0$. In particular, we obtain the
$\overline{k}$-equivalent model
$$Z^4-2Z^2\left(X^2-8XY+(2a+7)Y^2\right)+\left(X^4+2(2a-3)(XY)^2-8(a-1)XY^3+(4a+16b-3)Y^4\right).$$
\end{rem}

\noindent\textit{Proof of Theorem \ref{biellipticquartics}}. Given a smooth $(k,\overline{k})$-plane quartic curve $C$ defined by the form
$Z^4+Z^2L_{2,Z}+L_{4,Z}=0$ over $\overline{k}$, the map $C_{\overline{k}}\rightarrow C_{\overline{k}}/\langle\operatorname{diag}(1,1,-1)\rangle$
is, by Riemann-Hurwitz formula, a two-to-one $\overline{k}$-morphism to a genus one curve over $\overline{k}$. Hence,
$C_{\overline{k}}/\langle\operatorname{diag}(1,1,-1)\rangle$ is an elliptic over
$\overline{k}$, and $C$ is bielliptic.

Conversely, let $C$ be a smooth plane quartic curve over $k$,
which is bielliptic. In particular, $C_{\overline{k}}$ admits an
order two automorphism that can be taken, up to
$\overline{k}$-projective equivalence, as
$\tilde{w}=\operatorname{diag}(1,1,-1)$ leaving invariant a
non-singular plane model $F(X,Y,Z)=0$ of $C_{\overline{k}}$. By
non-singularity, $F(X,Y,Z)$ should be of degree at least $3$ in each
variable. Consequently, $F(X,Y,Z)$ reduces to the one of the forms
$Z^4+Z^2L_{2,Z}(X,Y)+L_{4,Z}(X,Y)=0$ or $Z^3L_{1,Z}+ZL_{3,Z}=0$ (cf. \cite{BaBa2016IA}).
However, the latter case is absurd, since it decomposes into $Z\cdot
G(X,Y,Z)$ and it becomes singular.

The stratification by automorphism groups follows from the work of
Henn \cite{He} (see also \cite{Ba2}), and for the families being
geometrically complete, we refer to \cite{Ritothers, PhDLorenzo}. \quad\quad\quad\quad\quad\quad\quad\quad\quad\quad\quad\quad\quad\quad\quad\quad\quad\quad\quad\quad\quad\quad\quad\quad$\square$

Be noted that any of the strata
$\widetilde{\mathcal{M}_3^{\operatorname{Pl}}}(G)$, whenever it is
non-empty, corresponds to a unique representation
$\varrho:G\hookrightarrow\operatorname{PGL}_3(\overline{k})$.
Consequently, any two smooth plane quartic curves with automorphism
groups isomorphic to $G$ are also $\overline{k}$-isomorphic (this is
not the case for higher degrees, since there are smooth
$(k,\overline{k})$-plane curves $C,
C'$ of the same degree $d>4$ with isomorphic but
non-conjugated automorphism groups, in particular,
$C_{\overline{k}}$ and $C'_{\overline{k}}$ are not
$\overline{k}$-isomorphic (cf. \cite{BadrThesis, BaBamed})).
Accordingly, by the aid of $\operatorname{GAP}$ library \cite{gap},
we can list down all bielliptic involutions that may happen by
considering the fixed $\varrho(G)$ given by Henn \cite{He} (cf.
\cite{Ba2, Ritothers, PhDLorenzo}):
\subsection*{Notations} We use $\zeta_n$ for a fixed primitive $n$th root of unity
inside $\overline{k}$ when the characteristic of $k$ is coprime with $n$. A projective linear transformation $A=(a_{i,j})$ of the plane $\mathbb{P}^2_{\overline{k}}$ is often written as $$[a_{1,1}X+a_{1,2}Y+a_{1,3}Z:a_{2,1}X+a_{2,2}Y+a_{2,3}Z:
a_{3,1}X+a_{3,2}Y+a_{3,3}Z],$$
where $\{X,Y,Z\}$ are the homogenous coordinates of $\mathbb{P}^{2}_{\overline{k}}$.

 \begin{cor}[Bielliptic involutions]\label{lem2.6} Let $C$ be {a smooth $(k,\overline{k})$-plane quartic curve over a field $k$} of characteristic $p=0$ or $p>7$. Assume that $C$ is bielliptic, then the set of bielliptic involutions, acting on a non-singular plane model for $C_{\overline{k}}$ in one of the families of Theorem \ref{biellipticquartics}, are classified as follows:
\begin{enumerate}[(i)]
  \item when $G=\Z/2\Z$ or $\Z/6\Z$; $\operatorname{diag}(1,1,-1)$,
\item when $G=\Z/2\Z\times\Z/2\Z$; $\operatorname{diag}(1,1,-1),\,\operatorname{diag}(1,-1,1),$ and $\operatorname{diag}(-1,1,1)$,
  \item when $G=\operatorname{D}_4$; $\operatorname{diag}(1,1,-1),\,[Y:X:\pm Z],$ and $[Y:-X:\pm\zeta_4Z]$,
  \item when $G=\operatorname{S}_3$; $[Y:X:Z],\,[\zeta_3Y:\zeta_3^2X:Z],$ and $[\zeta_3^2Y:\zeta_3X:Z]$,
  \item when $G=\operatorname{GAP}(16,13)$; $\operatorname{diag}(1,1,-1),\,\operatorname{diag}(1,-1,1),\,\operatorname{diag}(-1,1,1),\,[Y:X:\pm Z],$ and $[Y:-X:\pm\zeta_4Z]$,
  \item when $G=\operatorname{S}_4$; $\operatorname{diag}(1,1,-1),\,\operatorname{diag}(1,-1,1),\,\operatorname{diag}(-1,1,1),\,[Y:X:\pm Z],\,[Z:\pm Y:X],$ and $[\pm X:Z:Y]$,
\item when $G=\operatorname{GAP}(48,33)$; $\operatorname{diag}(1,1,-1),\,\operatorname{diag}(1,-1,1),\,\operatorname{diag}(-1,1,1),\,[Y:X:\pm Z],$ and $[Y:-X:\pm\zeta_4Z],$
\item when $G=\operatorname{GAP}(96,64)$; $\operatorname{diag}(1,1,-1),\,\operatorname{diag}(1,-1,1),\,\operatorname{diag}(-1,1,1),\,[Y:X:\pm Z],\,[Y:-X:\pm\zeta_4Z],\,[Z:\pm Y:X],\,[\pm X:Z:Y],\,[Z:\pm\zeta_4Y:-X],$ and $[\pm\zeta_4X:-Z:Y]$,
\item when $G=\operatorname{PSL}_2(\mathbb{F}_7)$; Let $\alpha:=\frac{-1+\sqrt{-7}}{2}$, and
$$g:=\left(
       \begin{array}{ccc}
         -2 & \alpha & -1 \\
         \alpha & -1 & 1-\alpha \\
         -1 & 1-\alpha & -1-\alpha \\
       \end{array}
     \right)
,\,h:=\left(
        \begin{array}{ccc}
          0 & 1 & 0 \\
          0 & 0 & 1 \\
          1 & 0 & 0 \\
        \end{array}
      \right)
,\,s:=\left(
        \begin{array}{ccc}
          -3 & -6 & 2 \\
          -6 & 2 & -3 \\
          2 & -3 & -6 \\
        \end{array}
      \right).$$
Then, we get the $21$ involutions $\psi\circ(\phi_0\circ s\circ\phi_0^{-1})$ for $\psi\in\langle g,h\rangle$, where
$$\phi_0:=\left(
           \begin{array}{ccc}
             1 & 1+\zeta_7\alpha & \zeta_7^2+\zeta_7^6 \\
             1+\zeta_7\alpha & \zeta_7^2+\zeta_7^6 & 1 \\
             \zeta_7^2+\zeta_7^6 & 1 & 1+\zeta_7\alpha \\
           \end{array}
         \right)\circ\left(
                       \begin{array}{ccc}
                         -\alpha & 1 & 2\alpha+3 \\
                         2\alpha+3 & -\alpha & 1 \\
                         1 & 2\alpha+3 & -\alpha \\
                       \end{array}
                     \right).$$
                     \end{enumerate}
\end{cor}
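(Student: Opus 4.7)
My approach combines the characterization from Proposition~\ref{prop2.3}(ii) with a direct enumeration inside each of the finite groups $\varrho(G)$ provided by Henn's classification. A bielliptic involution on a smooth projective genus-$3$ curve is an order-$2$ element of $\operatorname{Aut}(C_{\overline{k}})$ with exactly $2g_C-2=4$ fixed points on $C$. Applying Riemann-Hurwitz to the double cover $C\to C/\langle\sigma\rangle$ restricts the number of fixed points of any involution $\sigma$ to $\{0,4,8\}$, and since smooth plane quartics are never hyperelliptic by Proposition~\ref{nonhypergenus}, the value $8$ is ruled out. So for each $G$ I only have to separate involutions of $\varrho(G)$ with $4$ fixed points from fixed-point-free ones.

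Since $p\neq 2$, every order-$2$ element $\sigma\in\operatorname{PGL}_3(\overline{k})$ is $\operatorname{PGL}_3$-conjugate to $\operatorname{diag}(1,1,-1)$, hence fixes pointwise a line $\mathcal{L}_\sigma\subset\mathbb{P}^2_{\overline{k}}$ together with one isolated point $p_\sigma\notin\mathcal{L}_\sigma$; its fixed locus on $C$ is then $(\mathcal{L}_\sigma\cap C)\cup(\{p_\sigma\}\cap C)$ as a set. For $\sigma=\operatorname{diag}(1,1,-1)$ acting on a family model $Z^4+Z^2L_{2,Z}(X,Y)+L_{4,Z}(X,Y)=0$, the axis $Z=0$ cuts $C$ at the four pairwise distinct roots of $L_{4,Z}$ (by non-singularity), while $p_\sigma=[0:0:1]\notin C$; so $\sigma$ always has $4$ fixed points, confirming the first part of Theorem~\ref{biellipticquartics}. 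Moreover, the set of bielliptic involutions is stable under $\operatorname{Aut}(C_{\overline{k}})$-conjugation, which means it suffices to sweep $\operatorname{diag}(1,1,-1)$ (or its appropriate conjugate in the two highlighted families) across the full $\varrho(G)$-orbit.

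For each $G$ in Table~\ref{Hennclassification}, the plan is then to take the explicit matrix realization $\varrho(G)\subset\operatorname{PGL}_3(\overline{k})$ from \cite{He,Ba2,Ritothers,PhDLorenzo}, enumerate its order-$2$ elements with the aid of \cite{gap}, and verify each candidate by computing its axis and its isolated fixed point. For $G\in\{\Z/2\Z,\Z/6\Z\}$ the unique involution is automatically bielliptic; for $G\in\{\Z/2\Z\times\Z/2\Z,\operatorname{D}_4,\operatorname{GAP}(16,13),\operatorname{S}_4,\operatorname{GAP}(48,33),\operatorname{GAP}(96,64)\}$ each listed matrix is checked to be of order $2$ in $\varrho(G)$, to have its axis meeting the tabulated equation transversally in $4$ distinct points, and to have its isolated fixed point off the curve; a routine case-by-case matrix computation confirms the list and rules out the non-listed involutions (if any) by exhibiting an isolated fixed point that lies on $C$, equivalently showing them to be fixed-point-free.

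The main obstacle is the two cases $G=\operatorname{S}_3$ and $G=\operatorname{PSL}_2(\mathbb{F}_7)$, where $\operatorname{diag}(1,1,-1)$ does not act on the tabulated model. For $\operatorname{S}_3$, the $3\times3$ change of variables exhibited in the remark following Theorem~\ref{biellipticquartics} pushes the family into the normal $Z^4+Z^2L_{2,Z}+L_{4,Z}$ form; conjugating $\operatorname{diag}(1,1,-1)$ back yields the involution $[Y:X:Z]$, and the remaining two bielliptic involutions are its $\varrho(\Z/3\Z)$-conjugates by $\operatorname{diag}(1,\zeta_3,\zeta_3^2)$. For the Klein quartic $X^3Y+Y^3Z+Z^3X=0$, the matrix $\phi_0$ must be constructed as an explicit intertwiner between the standard realization of $\operatorname{PSL}_2(\mathbb{F}_7)\hookrightarrow\operatorname{PGL}_3$ and a realization in which the symmetric matrix $s$ becomes conjugate to $\operatorname{diag}(1,1,-1)$ on a bielliptic-normal model; since $\operatorname{PSL}_2(\mathbb{F}_7)$ has a single conjugacy class of $21$ involutions, sweeping $\phi_0\circ s\circ\phi_0^{-1}$ by $\psi\in\langle g,h\rangle$ produces all $21$ bielliptic involutions. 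Verifying the explicit construction of $\phi_0$ and its compatibility with $s$ is the main computational task.
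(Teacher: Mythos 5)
Your proposal is correct and follows essentially the same route as the paper: since each non-empty stratum fixes $\varrho(G)\subset\operatorname{PGL}_3(\overline{k})$ up to conjugation (Henn), one simply enumerates the order-two elements of $\varrho(G)$ (via GAP) and checks the fixed-point count of Proposition \ref{prop2.3}(ii), conjugating $\operatorname{diag}(1,1,-1)$ into the tabulated model for the $\operatorname{S}_3$ and Klein cases. In fact your own Riemann--Hurwitz computation gives a cleaner conclusion than you draw: the axis of any involution always meets $C$, so no involution of a smooth plane quartic is fixed-point-free, and once $8$ fixed points is excluded by non-hyperellipticity the count is forced to be $4$ --- every involution is bielliptic, which is why each list in the corollary is exactly the full set of involutions of $\varrho(G)$. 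The only slip is your final ``ruling out'' criterion: an involution whose isolated fixed point lies on $C$ is certainly not fixed-point-free (nor non-bielliptic); this branch of your argument is vacuous, so the conclusion is unaffected, but the stated equivalence is false.
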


\end{section}

\begin{rem} One observes that for smooth plane quartic curves that are bielliptic, all bielliptic
involutions are defined over a finite separable extension of the base field $k$, up to conjugation by a linear projective transformation of the plane.
\end{rem}
\subsection{Infinitude of quadratic points}

\begin{lem}\label{Tate} Let $C_k$ be a smooth plane curve of degree $d\geq4$ over $k$, where $k$ is a global field of characteristic $p>(d-1)(d-2)+1$. Then, $C_k$ is conservative over $k$.
\end{lem}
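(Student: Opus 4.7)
The plan is to bound the gap $g_{C,k}-g_C$ by means of Tate's inequality on genus change under inseparable extensions, and then to use the characteristic hypothesis to force this gap to vanish.

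First, I would pin down the relative genus. Since $C_k$ is a smooth plane curve of degree $d$ over $k$, it is $k$-isomorphic to a non-singular plane model $F(X,Y,Z)=0$ in $\mathbb{P}^2_k$. The adjunction formula for hypersurfaces in $\mathbb{P}^2_k$ is purely algebraic and valid over any base field, and it gives arithmetic genus $(d-1)(d-2)/2$; combined with Riemann--Roch over $k$, this identifies the Euler characteristic with $1-g_{C,k}$ and yields $g_{C,k}=(d-1)(d-2)/2$. As recalled in \S\ref{conservativedefn}, trivially $g_C\leq g_{C,k}$.

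Next, I would invoke Tate's theorem on inseparable genus change \cite{Tate}: for any function field in one variable over $k$, the non-negative integer $g_{C,k}-g_C$ is divisible by $(p-1)/2$. Hence either the two genera coincide, or $g_{C,k}-g_C\geq(p-1)/2$.

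Finally, suppose for contradiction that $g_C<g_{C,k}$. Combining the previous two steps gives
$$\frac{p-1}{2}\;\leq\;g_{C,k}-g_C\;\leq\;g_{C,k}\;=\;\frac{(d-1)(d-2)}{2},$$
i.e.\ $p\leq(d-1)(d-2)+1$, contradicting the hypothesis $p>(d-1)(d-2)+1$. Therefore $g_C=g_{C,k}$, and $C_k$ is conservative. The main obstacle is simply locating Tate's divisibility statement in the precise form required and confirming its applicability without assuming that $k$ is perfect; the remaining relative-genus computation on the plane model is routine adjunction.
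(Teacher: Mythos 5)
Your argument is correct and is essentially the paper's proof: the paper simply cites \cite[Corollary 2]{Tate} together with the observation that $g_{C,k}=(d-1)(d-2)/2<(p-1)/2$, and that corollary is exactly the consequence of Tate's divisibility-by-$(p-1)/2$ statement that you spell out. The only difference is that you unpack the citation into the explicit contradiction argument; note also that Tate's theorem is stated precisely for non-perfect base fields, so the applicability concern you raise at the end is not an issue.
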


\begin{proof}
The result follows from \cite[Corollary 2]{Tate}, since the relative
genus $g_{C,k}=(d-1)(d-2)/2<(p-1)/2$.
\end{proof}

\begin{thm}\label{thm3.3} Let $C$ be a smooth plane curve of degree $d\geq4$ over a global field $k$ of characteristic $p=0$ or $p>(d-1)(d-2)+1$, and assume in positive
characteristic that $Jac(C_k)$ over $\overline{k}$ has no non-zero
homomorphic images defined over $\overline{\F}_q$. For any finite
field extension $L/k$ inside $\overline{k}$, the set of quadratic
points $\Gamma_2(C,L)$ of $C$ over $L$ is always a finite set when
$d\geq5$, also it does when $d=4$ and $\Aut(C_{\overline{k}})\cong
1, \Z/3\Z,$ or $\Z/9\Z$. Moreover,
\begin{enumerate}[(i)]
\item if $d=4$ and $p=0$, then it exists a number field $L/k$ inside $\overline{k}$ for which $\Gamma_2(C,L)$ is an infinite set
if and only if $\Aut(C_{\overline{k}})\cong\Z/2\Z,\,\Z/2\Z\times\Z/2\Z,\,\Z/6\Z,\,\operatorname{S}_3,\,\operatorname{D}_4,\,\operatorname{GAP}(16,13),\,\operatorname{S}_4,\, \operatorname{GAP}(48,33),\, \operatorname{GAP}(96,64),$ or $\operatorname{PSL}_2(\F_7)$.
\item if $d=4$ and $p>7$, then $\operatorname{Jac}(C)$ over $\overline{k}$ does not contain bielliptic quotients associated to isotrivial elliptic curves {only if} $C_{\overline{k}}$ belongs to an open set (of the same dimension) of one of the strata
$\widetilde{\mathcal{M}_3^{\operatorname{Pl}}}(G)$ with
$G\in\{\Z/2\Z,\,\Z/2\Z\times\Z/2\Z,\,\operatorname{S}_3,\,
\operatorname{D}_4,\,\operatorname{GAP}(16,13),\,\operatorname{S}_4\}$
\footnote{{By the virtue of the theorem of Grauert-Samuel in \S1,
the other non-empty strata $\widetilde{M_3^{\operatorname{Pl}}}(G)$
not satisfying the hypothesis of Theorem \ref{thm3.3} (ii) may also
have infinite number of point without need to extend to a degree $2$
extension.}}. Moreover, for such $C$'s (i.e when no isotrivial
elliptic curves appear), there is a finite field extension $L/k$
inside $\overline{k}$ such that $\Gamma_2(C,L)$ is an infinite set.
\end{enumerate}
\end{thm}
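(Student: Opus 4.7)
The plan is to reduce the finiteness statements to Theorem \ref{AbramovichHarrisSilvermanII} and Theorem \ref{SchweizerII}, and the infinitude statements to Proposition \ref{100} (or its characteristic-zero analogue) combined with a Mordell--Weil rank extension. Conservativity in positive characteristic is furnished by Lemma \ref{Tate}, since $p>(d-1)(d-2)+1$ gives $g_{C,k}<(p-1)/2$. For $d\geq 5$, Proposition \ref{nonhypergenus} says $C$ is neither hyperelliptic nor bielliptic and $g_C\geq 6$, so the ambient theorems immediately yield $|\Gamma_2(C,L)|<\infty$ for every finite $L/k$. For $d=4$ with $\operatorname{Aut}(C_{\overline{k}})\cong 1$, $\Z/3\Z$, or $\Z/9\Z$, these three groups are absent from Table \ref{Hennclassification}, hence by Theorem \ref{biellipticquartics} the curve $C$ is non-bielliptic, by Proposition \ref{nonhypergenus} non-hyperelliptic, and the same conclusion follows. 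This already establishes the ``only if'' direction of part (i) by contraposition: if $\operatorname{Aut}(C_{\overline{k}})$ is not among the ten listed groups, it must be one of these three, so $\Gamma_2(C,L)$ is finite for every finite $L$.

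For the ``if'' direction of part (i), the listed groups are exactly the bielliptic strata of Table \ref{Hennclassification}, so $C$ is bielliptic by Theorem \ref{biellipticquartics}. Over a number field, inseparability is vacuous, so the argument of Proposition \ref{100} carries through to produce a finite extension $L_0/k$ over which the bielliptic involution is defined and the genus-one quotient acquires an $L_0$-point; this yields a degree-two $L_0$-morphism $\pi\colon C_{L_0}\to E$ onto an elliptic curve $E/L_0$. A further finite extension $L/L_0$ can always be chosen so that $E(L)$ has positive rank, and pulling back $E(L)$ under $\pi$ produces infinitely many points on $C_L$; each such point lies in $\Gamma_2(C,L)$, since fibres of $\pi$ have degree two.

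For part (ii), the production of infinite $\Gamma_2(C,L)$ proceeds by the same pattern using Proposition \ref{100} together with Remark \ref{rem1.11}, the latter ensuring the bielliptic morphism itself (and not merely the involution) descends to a finite extension, after which one enlarges again to boost the rank of the elliptic quotient. The delicate ingredient is the stratum characterisation: for each bielliptic involution $\tilde w$ appearing in Corollary \ref{lem2.6} one must compute the $j$-invariant of $C_{\overline{k}}/\langle\tilde w\rangle$ and decide whether it lies in $\overline{\F}_q$. The three zero-dimensional strata $\operatorname{GAP}(48,33)$, $\operatorname{GAP}(96,64)$, and $\operatorname{PSL}_2(\F_7)$ each consist of a single $\overline{k}$-isomorphism class with representative defined over $\overline{\F}_q$, so their bielliptic quotients are automatically isotrivial. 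The one-parameter $\Z/6\Z$-family $Z^4+aZ^2Y^2+(X^3Y+Y^4)$ requires an explicit calculation showing the quotient elliptic curve has $j$-invariant independent of $a$ and lying in $\overline{\F}_q$, excluding this stratum too. The remaining six strata $\{\Z/2\Z,\,\Z/2\Z\times\Z/2\Z,\,\operatorname{S}_3,\,\operatorname{D}_4,\,\operatorname{GAP}(16,13),\,\operatorname{S}_4\}$ have positive moduli dimension and yield families of elliptic quotients whose $j$-invariant is non-constant in the parameters, producing a Zariski-open subset of non-isotrivial quotients. The main obstacle is precisely this case-by-case $j$-invariant computation, most notably the $\Z/6\Z$-stratum where constancy of $j$ is non-obvious from the defining equation; once these calculations are in place, the remainder of the theorem follows by a formal application of the tools assembled in Section 1.
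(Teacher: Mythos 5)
Your proposal follows essentially the same route as the paper: conservativity from Lemma \ref{Tate}, the gonality/involution argument of Proposition \ref{nonhypergenus} plus the normal form of Theorem \ref{biellipticquartics} to decide biellipticity, Theorems \ref{AbramovichHarrisSilvermanII} and \ref{SchweizerII} for finiteness, and Proposition \ref{100} (with Remark \ref{rem1.11}) for the infinitude statements. The treatment of $d\geq5$, of the strata $1$, $\Z/3\Z$, $\Z/9\Z$, and of part (i) coincides with the paper's proof, modulo your unwinding of Proposition \ref{100} where the paper simply cites Theorem \ref{AbramovichHarrisSilvermanII}.

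The one place you genuinely diverge is part (ii). For the zero-dimensional strata you argue that the single $\overline{k}$-isomorphism class has a representative over $\overline{\F}_q$, so every elliptic quotient is isotrivial; the paper instead reads this off from explicit $j$-invariant computations (the MAGMA code in Lemma \ref{lem4.8} and the computation in Theorem \ref{thm1}, where the $\Z/6\Z$ quotient has $j=0$ identically). Your a priori argument is cleaner for those three strata. However, your blanket assertion that the remaining six strata ``yield families of elliptic quotients whose $j$-invariant is non-constant in the parameters'' is both unverified and in tension with the paper's own data: Lemma \ref{lem4.8} shows that for $\operatorname{GAP}(16,13)$ six of the seven bielliptic quotients have $j=1728$ identically, so only the quotient by $\iota_1$ has parameter-dependent $j$-invariant. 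You correctly flag the case-by-case $j$-invariant computation as the remaining obstacle, and to be fair the paper's proof also defers it to Lemma \ref{lem4.8} and the accompanying code rather than carrying it out for all strata; but as written your claim would need to be weakened to ``at least one elliptic quotient in the isogeny decomposition of $\operatorname{Jac}(C)$ has non-constant $j$-invariant on each of these strata,'' which is what the openness statement in (ii) actually requires.
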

\begin{proof} First, when $d\geq5$ or $d=4$ and $\Aut(C_{\overline{k}})\cong 1, \Z/3\Z,$ or $\Z/9\Z$, $C$ is neither hyperelliptic nor
bielliptic (Proposition \ref{nonhypergenus} and Theorem
\ref{biellipticquartics}). Therefore, $\Gamma_2(C,L)$ is a finite
set (Theorems \ref{SchweizerII}, \ref{AbramovichHarrisSilvermanII}).
Second, if $d=4$ and $p=0$, then, by Theorem
\ref{biellipticquartics}, $C$ is bielliptic if and only if
$\Aut(C_{\overline{k}})\cong\Z/2\Z,\,\Z/2\Z\times\Z/2\Z,\,\Z/6\Z,\,\operatorname{S}_3,\,\operatorname{D}_4,\,\operatorname{GAP}(16,13),\,\operatorname{S}_4,\,
\operatorname{GAP}(48,33),\, \operatorname{GAP}(96,64),$ or
$\operatorname{PSL}_2(\F_7)$. Equivalently, $\Gamma_2(C,L)$ is an
infinite set for some finite extension $k\subseteq
L\subseteq\overline{k}$ (Theorem \ref{AbramovichHarrisSilvermanII}).
Finally, assume that $d=4$ and $p>7$, then, by Lemma \ref{Tate}, $C$
is conservative over $k$. If
$\operatorname{Aut}(C_{\overline{k}})=\Z/6\Z,\,\operatorname{GAP}(48,33),\,\operatorname{GAP}(96,64),$
or $\operatorname{PSL}_2(\F_7)$, then isotrivial elliptic curves
appear in $\operatorname{Jac}(C)$ over $\overline{k}$ (for more
details, see Lemma \ref{lem4.8} and its proof, in particular the
MAGMA code to compute the Jacobian and $j$-invariants), hence we can
not apply Theorem \ref{SchweizerII} in these situation.
 Furthermore, in some cases, the $j$-invariant depends on parameters where some particular specializations corresponds to isotrivial elliptic curves, but not the generic case (we again refer to Lemma \ref{lem4.8}), and this justifies $C_{\overline{k}}$ being a member of an open set of one of the prescribed strata $\widetilde{\mathcal{M}_3^{\operatorname{Pl}}}(G)$. This shows the ``only if'' part. For the remaining situations of $\widetilde{\mathcal{M}_3^{\operatorname{Pl}}}(G)$ where
 isotrivial elliptic curves do not occur, we may apply Proposition \ref{100} to deduce that $\Gamma_2(C,L)$ is infinite for some finite field extension $L/k$ inside $\overline{k}$.
 \end{proof}

The next result maybe is well-known to specialists, however we
present a quite easy proof.
\begin{cor} \label{FermatKlein} Let $d\geq5$ be a fixed integer, and $C$ be any smooth plane curve of degree $d$ over a global field $k$ of characteristic $p=0$ or $p>(d-1)(d-2)+1$
(and in positive characteristic we further assume that $\operatorname{Jac}(C_k)$ over
$\overline{k}$ has no non-zero homomorphic images defined over
$\mathbb{F}_q$). Then, the number of quadratic field extensions
$k\subset k'\subseteq\overline{k}$ where $C(k')\neq C(k)$ are
finitely many. In particular, if we consider the Fermat curve
$C:X^d+Y^d-Z^d=0$ of degree $d$ over $\Q$, then
$C(\Q)=C(\Q(\sqrt{D}))$ for all square-free integers $D$, except
possibly finitely many values for $D$. That is, there are, in the
worst case, only finitely many quadratic number fields in which we
may have more solutions in $\Q(\sqrt{D})$ to $X^d+Y^d-Z^d=0$ than
these over $\Q$. The same is true for the Klein curve
$X^{d-1}Y+Y^{d-1}Z+Z^{d-1}X=0$, and any other smooth plane curves of
degree $d$ over $\Q$ or number fields.
\end{cor}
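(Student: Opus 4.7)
The plan is to reduce everything to the finiteness of the full set of quadratic points $\Gamma_2(C,k)$ provided by Theorem \ref{thm3.3}, and then to bound the number of quadratic extensions that can host a new point.

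First I invoke Theorem \ref{thm3.3} with $L=k$: under the running hypotheses ($d\geq 5$, the stated bound on $p$, and the Jacobian condition in positive characteristic), $C$ is neither hyperelliptic nor bielliptic, so $\Gamma_2(C,k)$ is finite. Recalling the definition, $\Gamma_2(C,k)=\bigcup C(k')$ where the union ranges over all field extensions $k\subseteq k'\subseteq\overline{k}$ with $[k':k]\leq 2$. Hence the disjoint union $\bigsqcup_{[k':k]=2}\bigl(C(k')\setminus C(k)\bigr)$ is contained in the finite set $\Gamma_2(C,k)\setminus C(k)$, and in particular is itself finite.

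Next, to each $P\in\Gamma_2(C,k)\setminus C(k)$ I attach its (unique) residue field $k(P)\subseteq\overline{k}$; since $P\in C(k')$ for some quadratic $k'/k$ but $P\notin C(k)$, the extension $k(P)/k$ is quadratic. Set $S:=\{\,k(P)\,:\,P\in\Gamma_2(C,k)\setminus C(k)\,\}$; this is a finite set of quadratic extensions of $k$. Now suppose $k'/k$ is a quadratic extension inside $\overline{k}$ with $C(k')\neq C(k)$, and pick $P\in C(k')\setminus C(k)$. Then $k\subsetneq k(P)\subseteq k'$, and since $[k':k]=2$ this forces $k'=k(P)\in S$. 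This proves the first sentence of the corollary.

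Finally, the Fermat curve $X^d+Y^d-Z^d=0$ and the Klein curve $X^{d-1}Y+Y^{d-1}Z+Z^{d-1}X=0$ are non-singular plane equations of degree $d\geq 5$ over $\Q$ (or any number field), so the base field has characteristic zero, the Jacobian hypothesis is vacuous, and the general statement applies verbatim, giving the Fermat and Klein consequences. I do not expect any substantive obstacle here: the whole argument is a one-line bookkeeping observation on top of Theorem \ref{thm3.3}, the only mildly delicate point being that the minimal field of definition of a non-$k$-rational quadratic point equals the ambient quadratic field $k'$, which is immediate from the degree equality $[k(P):k]=[k':k]=2$.
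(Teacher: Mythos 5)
Your proposal is correct and follows essentially the same route as the paper: both deduce finiteness of $\Gamma_2(C,k)$ from Theorem \ref{thm3.3} (since $C$ is neither hyperelliptic nor bielliptic for $d\geq5$) and then note that each quadratic extension carrying a new point is accounted for by one of the finitely many points of $\Gamma_2(C,k)\setminus C(k)$. Your residue-field bookkeeping just makes explicit the step the paper summarizes as ``only finitely many $k'$ could satisfy $C(k)\subsetneq C(k')$.''
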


\begin{proof} By definition, we have $C(k)\subseteq C(k')\subseteq\Gamma(C,k)$ for any quadratic field extension $k\subset k'\subseteq\overline{k}$. We also know from Theorem \ref{thm3.3} that $\Gamma_2(C,k)$ is a finite set. Hence, only finitely many $k'$ could satisfy $C(k)\subsetneq C(k')$.
\end{proof}


\begin{section}{Quadratic points on smooth plane curves fixing the base field: case $\Q$}
In this section, we restrict our attention to smooth $(\Q,\overline{\Q})$-plane quartic curves $C$, which are bielliptic (see Theorem \ref{biellipticquartics}). Since, the degree $d=4$ is relatively prime to $3$, we have that $C$ is a smooth plane curve over $\Q$, that is, $\Q$ is not only a field of definition for $C$, but also a plane-model field of definition.

We find some interest to conjecture the following.
\begin{conj}\label{conj} Fix a stratum of the shape $\widetilde{\mathcal{M}_3^{\operatorname{Pl}}}(G)$, where all of its
$\overline{\mathbb{Q}}$-points are bielliptic, equivalently, take $G\neq1,\Z/3\Z,\Z/9\Z$. Then, there is an infinite family $\mathcal{E}$ (resp. $\mathcal{D}$)$\subseteq\widetilde{\mathcal{M}_3^{\operatorname{Pl}}}(G)$ of $\Q$-isomorphism classes of smooth plane quartic curves over $\mathbb{Q}$, such that $\Gamma_2(C,\mathbb{Q})$ is a finite (resp. infinite) set, for all $[C]\in\mathcal{E}$.
\end{conj}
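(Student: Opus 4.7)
The strategy rests on Theorem \ref{AbramovichHarrisSilvermanI}: since smooth plane quartics are never hyperelliptic (Proposition \ref{nonhypergenus}), $\Gamma_2(C,\Q)$ is infinite if and only if $C$ admits a degree-two $\Q$-morphism onto an elliptic curve $E_\Q$ of positive $\Q$-rank. For each stratum $\widetilde{\mathcal{M}_3^{\operatorname{Pl}}}(G)$ with $G\neq 1,\Z/3\Z,\Z/9\Z$, Corollary \ref{lem2.6} lists explicitly the geometric bielliptic involutions, so the first step is to pin down, for a given $\Q$-isomorphism class $[C]$, which of these involutions descend to $\Aut(C_{\overline{\Q}})^{\Gal(\overline{\Q}/\Q)}$ after $\PGL_3$-conjugation and hence produce a bielliptic quotient defined over $\Q$. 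On the normal forms of Table \ref{Hennclassification} the involution $\operatorname{diag}(1,1,-1)$ is visibly $\Q$-rational on every family written in the shape $Z^4+Z^2L_{2,Z}+L_{4,Z}=0$, and setting $u:=Z^2$ exhibits the quotient explicitly as $u^2+L_{2,Z}(X,Y)\,u+L_{4,Z}(X,Y)=0$, a curve of genus one over $\Q$ with an obvious $\Q$-rational divisor class, hence a $\Q$-elliptic curve $E_\Q$ after choosing a basepoint. Analogous explicit quotients must be computed for the remaining bielliptic involutions of Corollary \ref{lem2.6} in each stratum, paying attention to whether the involution is actually $\Q$-rational or only becomes so after descent of a twist.

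For the family $\mathcal{D}$ of curves with infinitely many quadratic points, the plan is to vary the parameters $(L_{2,Z},L_{4,Z})$ inside the stratum along a one-parameter subfamily whose generic fiber carries a non-torsion $\Q$-point on $E_\Q$. By Silverman's specialization theorem applied to the corresponding elliptic surface, all but finitely many specializations of the parameter yield an $E_\Q$ of positive rank, and therefore $\Gamma_2(C,\Q)$ is infinite for them. Inequivalence of the resulting $[C]$'s as $\Q$-points of the stratum is then checked via Dixmier–Ohno (or equivalent) invariants, producing an infinite $\mathcal{D}\subset\widetilde{\mathcal{M}_3^{\operatorname{Pl}}}(G)(\Q)$. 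For $G=\Z/6\Z$ and $G=\operatorname{GAP}(16,13)$ this is precisely what is carried out in Theorems \ref{thm1}--\ref{thm4}; the template should generalize stratum by stratum.

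For the family $\mathcal{E}$ of curves with only finitely many quadratic points, one must rule out a positive-rank bielliptic quotient for \emph{every} bielliptic involution of $C$ defined over $\Q$. Two complementary devices are natural. The first is a descent/twisting argument: pick a parameter family where, except for the finitely many $\Q$-rational bielliptic involutions, all others are permuted non-trivially by $\Gal(\overline{\Q}/\Q)$ and hence do not yield $\Q$-morphisms; this reduces the count of elliptic curves to control. The second is an arithmetic device: arrange each of the remaining quotients to lie in a quadratic-twist family of a fixed rank-zero elliptic curve with favourable root number, so that by Heath-Brown/Ono-type density statements a positive proportion of twists keeps rank zero, and one simultaneously forces all of them to rank zero by imposing finitely many congruence conditions on the parameter. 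Combining this with a parameter count should leave infinitely many admissible $[C]$'s.

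The main obstacle is exactly the simultaneous rank-zero requirement for $\mathcal{E}$: in strata like $G=\operatorname{PSL}_2(\F_7)$ one has up to $21$ bielliptic involutions, so in principle many elliptic quotients to control at once, and an unconditional proof would need a density result about joint vanishing of $\Q$-ranks across a finite collection of elliptic-curve families parametrised coherently. This is precisely the kind of deep analytic input that is only partially available, which is why the authors state a conjecture in general and verify it only for the two cases $G=\Z/6\Z$ and $G=\operatorname{GAP}(16,13)$, where there is essentially one bielliptic quotient to handle and the rank conditions become tractable by explicit descent.
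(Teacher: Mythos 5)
You should be clear at the outset that the statement you are ``proving'' is stated in the paper as a conjecture, and the paper itself offers no general proof: it only supports the conjecture for the two strata $G=\Z/6\Z$ and $G=\operatorname{GAP}(16,13)$ via Theorems \ref{thm1}--\ref{thm4}. Your proposal is likewise a strategy sketch rather than a proof, and to your credit you say so explicitly in the last paragraph. For the two verified strata your outline matches the paper's actual route: reduce via Theorem \ref{AbramovichHarrisSilvermanI} (quartics being non-hyperelliptic by Proposition \ref{nonhypergenus}) to controlling the $\Q$-rank of the bielliptic quotients, use the complete/representative families of \cite{Ritothers} and the twist classification of \cite{PhDLorenzo} to isolate which involutions of Corollary \ref{lem2.6} are $\Q$-rational, and then force positive rank for $\mathcal{D}$ and rank zero for $\mathcal{E}$.

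The differences are in the tactics, and they are worth recording. For $\mathcal{D}$ you invoke Silverman's specialization theorem on a one-parameter elliptic surface, which only yields positive rank for all but finitely many fibres; the paper instead exhibits an explicit section $P_a=(a,a^2/2)$ on $z^2=x^3-a^3(1-a/4)$ and rules out torsion for every $a$ using Mazur's bound, which is more elementary and uniform. To separate $\Q$-isomorphism classes you propose Dixmier--Ohno invariants, whereas the paper simply uses that the families are representative over $\Q$ (distinct parameters give non-isomorphic curves); both work. For $\mathcal{E}$ the paper uses two distinct mechanisms: for $\Z/6\Z$ it arranges a subfamily whose quotient is the \emph{fixed} rank-zero curve $z^2=x^3-27$ (no analytic input at all), and only for $\operatorname{GAP}(16,13)$ does it use non-diagonal twists to kill all but one $\Q$-rational involution and then Mai--Murty/Ono rank-zero twist results. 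Finally, the obstacle you name for the general case is genuine and is precisely why the statement remains a conjecture: for strata with many $\Q$-rational bielliptic involutions (e.g.\ $\operatorname{S}_4$, $\operatorname{GAP}(96,64)$, $\operatorname{PSL}_2(\F_7)$) one would need simultaneous rank-zero control over several coherently parametrised twist families, and your suggestion that finitely many congruence conditions suffice for joint vanishing is not justified by the single-family density results you cite. So: your proposal is a faithful reconstruction of the paper's partial evidence, but neither it nor the paper establishes Conjecture \ref{conj} in general.
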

In what follows, we are going to support the above conjecture in two
different situations. By the work of
Lercier-Ritzenthaler-Rovetta-Sijsling in \cite[\S2]{Ritothers}, we
have a parametrization of the (coarse) moduli space of smooth plane
quartic curves in terms of complete and representative families over
$\Q$ for all the strata
$\widetilde{\mathcal{M}_3^{\operatorname{Pl}}}(G)$, except when
$G=\Z/2\Z$ (a representative family over $\mathbb{R}$ does not
exist). Moreover, the work of Lorenzo in \cite{PhDLorenzo} detailed
the study of the twists of smooth plane quartic curves over $\Q$.
These results helps us a lot in our study of quadratic points in the
sense of the previous conjecture. The main idea is to start with a
family of smooth plane quartic curves over $\Q$, having many
parameters in the defining equation over $\Q$. This in turns allows
us to construct a subfamily with infinite cardinality of non
$\Q$-isomorphic smooth plane quartic curves with the same
automorphism group $G$ (up to group isomorphisms), where its members
are mapped to concrete elliptic curves over $\Q$ whose rank is zero
(resp. positive) over $\Q$. We
note that, in the first case, we may reduce up to change of
variables to some twists over $\Q$ that have a
unique bielliptic involution defined over $\Q$ (in particular, it
suffices to deal with a specific elliptic curve of rank zero, not
with family of a elliptic curves over $\Q$).

\begin{subsection}{The conjecture \ref{conj} is true for $\widetilde{\mathcal{M}_3^{\operatorname{Pl}}}(\Z/6\Z)$}
Let $k$ be a field of characteristic $p=0$ or $p>7$. The one parameter family $\mathcal{C}_a$ defined by
$$\mathcal{C}_a:\,aZ^4+Y^2(Y^2+aZ^2)+X^3Y=0,$$
where $a\neq0,4$, is a representative family over $k$ for $\widetilde{\mathcal{M}_3^{\operatorname{Pl}}}(\Z/6\Z)$, see \cite[Theorem 3.3]{Ritothers}. Thus, any smooth plane quartic curve $C$ over $k$ with automorphism group isomorphic to $\Z/6\Z$ has a non-singular plane model in $\mathcal{C}_a$ for a unique $a\in k$, that is, there exists a finite extension $L/k$ inside $\overline{k}$ where $C\otimes_kL$ is $L$-isomorphic to a unique non-singular polynomial equation of the form $aZ^4+Y^2(Y^2+aZ^2)+X^3Y=0$ for some $a\in k$. In particular, $\Gamma_2(C,L)=\Gamma_2(aZ^4+Y^2(Y^2+aZ^2)+X^3Y=0,L)$.

We also note that the above family is
$\overline{k}$-isomorphic to the geometrically complete family in
Theorem \ref{biellipticquartics} via a diagonal change of variables
\cite{Ritothers}, hence by Corollary \ref{lem2.6},
$\tilde{w}=\operatorname{diag}(1,1,-1)$ is again a unique
bielliptic involution for any smooth curve in the family
$\mathcal{C}_a$.

\begin{thm}\label{thm1} Consider a smooth bielliptic quartic plane curve $C_a$ of the form $aZ^4+Y^2(Y^2+aZ^2)+X^3Y=0$ for some $a\in\mathbb{Q}\setminus\{0,4\}$. Then, the quotient
$C_a/\langle\tilde{w}\rangle$ is an elliptic curve of
positive rank over $\Q$. In particular, $\mathcal{C}_a$ with $a\in
\mathbb{Q}\setminus\{0,4\}$ is an infinite family of bielliptic smooth plane
quartic curves over $\Q$ whose full automorphism group isomorphic to
$\Z/6\Z$, and such that $\Gamma_2(\mathcal{C}_a,\Q)$ is an infinite
set.
\end{thm}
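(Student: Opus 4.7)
The plan is to compute the bielliptic quotient $E_a := C_a/\langle\tilde{w}\rangle$ explicitly in Weierstrass form, exhibit a universal rational point on $E_a$ coming from a rational point of $C_a$, and show that this point has infinite order in $E_a(\Q)$ for all but finitely many rational $a$. Since $\tilde{w}=\operatorname{diag}(1,1,-1)$ acts on $\PP^{2}$ by $Z\mapsto -Z$, the ring of invariants is $\overline{\Q}[X,Y,Z^{2}]$. Substituting $W=Z^{2}$ and dehomogenizing at $Y=1$ gives the affine quotient model
\[
aW^{2}+aW+1+X^{3}=0,
\]
a genus-one curve (by Riemann--Hurwitz applied to the double cover $C_a\to E_a$) with the obvious rational point $(X,W)=(-1,0)$. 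Completing the square in $W$ and clearing denominators places $E_a$ in the Mordell form
\[
E_a:\ y^{2}=x^{3}+16\,a^{3}(a-4),
\]
and a direct verification shows that $(-1,0)$ corresponds to $P_a := (4a,\,4a^{2})$ on $E_a$.

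The key computation is that the tangent slope to $E_a$ at $P_a$ is $\lambda = 3(4a)^{2}/(2\cdot 4a^{2}) = 6$, independent of $a$. The duplication formula then gives the closed form
\[
2P_a = \bigl(36-8a,\ -4(a^{2}-18a+54)\bigr).
\]
From this one tests the possible torsion orders directly: $P_a$ cannot be $2$-torsion, since its $y$-coordinate $4a^{2}$ vanishes only at the excluded value $a=0$; comparing coordinates in the equation $2P_a=-P_a$ shows that $P_a$ is $3$-torsion if and only if $a=3$; and $P_a$ is $6$-torsion only when $a^{2}-18a+54=0$, which has no rational roots. Invoking the classical fact that any Mordell curve $y^{2}=x^{3}+D$ over $\Q$ has torsion subgroup of order dividing $6$, we conclude that $P_a\in E_a(\Q)$ has infinite order for every $a\in\Q\setminus\{0,3,4\}$. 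Hence $E_a$ has positive Mordell--Weil rank over $\Q$ for each of these (infinitely many) $a$, yielding the desired infinite subfamily of $\mathcal{C}_a$.

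The assertion about quadratic points is then immediate: for each such $a$, the infinitude of $E_a(\Q)$ pulls back along the degree-two morphism $C_a\to E_a$ to an infinite set of points of $C_a$ defined over at most quadratic extensions of $\Q$, and this set is contained in $\Gamma_2(C_a,\Q)$.

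The one genuine obstacle is the value $a=3$. There $E_3:\ y^{2}=x^{3}-432$ is $\Q$-isomorphic to the projective Fermat cubic $u^{3}+v^{3}=w^{3}$, which has rank zero over $\Q$ by Euler/Fermat, and indeed $P_3=(12,36)$ is exactly a $3$-torsion point. Consequently $\Gamma_2(C_3,\Q)$ is in fact finite, so the single value $a=3$ must be excluded from a literal ``for every $a$'' reading of the statement; away from this exception the argument above goes through unchanged and still supplies the claimed infinite family.
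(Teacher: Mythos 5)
Your approach is essentially the paper's: pass to the affine quotient $aW^2+aW+1+X^3=0$, put it in Weierstrass form, and track the rational point coming from $(-1:1:0)\in C_a(\Q)$. (The paper's model is $z^2=x^3-a^3(1-a/4)$ with the point $(a,a^2/2)$; this is your $y^2=x^3+16a^3(a-4)$ with $P_a=(4a,4a^2)$ after rescaling $(x,z)\mapsto(4x,8z)$.) Where you genuinely improve on the paper is the torsion check: the paper disposes of torsion by a MAGMA computation done generically in $a$, and you correctly observe that this fails under specialization --- at $a=3$ one gets $y^2=x^3-432$, the Fermat cubic, which has rank $0$ with $P_3=(12,36)$ of exact order $3$. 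So the first sentence of the theorem is false at $a=3$; that is a real correction, and your hand computation of the slope $\lambda=6$ and of $2P_a=(36-8a,\,-4(a^2-18a+54))$ is right.

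However, your order-$6$ analysis has a gap, and it costs you a second exceptional value. The condition $y(2P_a)=0$, i.e.\ $a^2-18a+54=0$, says that $2P_a$ has order $2$, hence that $P_a$ has order $4$ --- which is already impossible on a Mordell curve; it is not the order-$6$ condition. The correct condition is that $2P_a$ have order $3$, i.e.\ that $x(2P_a)=36-8a$ be a root of the $3$-division polynomial $3x(x^3+4D)$ with $D=16a^3(a-4)$. The branch $36-8a=0$ gives $a=9/2$: there $E_{9/2}\colon y^2=x^3+729$ is $\Q$-isomorphic to $y^2=x^3+1$ (substitute $x=9u$, $y=27v$), a curve of rank $0$ with torsion $\Z/6\Z$, and $P_{9/2}=(18,81)$ maps to the generator $(2,3)$, a point of exact order $6$. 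The other branch, $(36-8a)^3=-4D$, reduces to $a^4-12a^3+108a^2-486a+729=0$, whose only rational root is $a=3$. Hence the full set of $a\in\Q\setminus\{0,4\}$ for which $P_a$ is torsion is $\{3,9/2\}$, and both values give quotient curves of rank $0$, so both must be excluded from the first assertion of the theorem. With that emendation your argument is complete, and the ``in particular'' conclusion (an infinite family with $\Gamma_2(\mathcal{C}_a,\Q)$ infinite) survives unchanged.
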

\begin{proof} We work affine with $az^4+az^2+x^3+1=0$ by taking $Y=1$ (observe that, we have a unique point on $C_a$ with
$Y=0$ that is also fixed by $\tilde{w}$). In particular, $C_a/\langle\tilde{w}\rangle:\,az^2+az+x^3+1=0$. We apply the
$\Q$-change of variables $x\mapsto-x$ and $z\mapsto z-\frac{1}{2}$
to obtain $az^2=x^3+a/4-1$. Next, change $z\mapsto(1/a^2)z$ and
$x\mapsto(1/a)x$ to finally get the elliptic curve
$E/\Q:z^2=x^3-a^3(1-a/4)$ whose $j$-invariant equals to zero.
Furthermore, the point $P_a:=(x,z)=(a,a^2/2)$ is a non-torsion point
on $E/\Q$ (if it is not, then it has order $m\leq10$ or $m=12$ (cf.
\cite[Theroem 7']{Mazur}), and one can check by MAGMA that the order
of $P_a$ is distinct from these values). Consequently,
$\operatorname{rank}_{\Q}(C_a/\langle\tilde{w}\rangle)\geq 1$ for
any $a\in\Q\setminus\{0,4\}$, and thus $\Gamma_2(C_a,\Q)$ is an
infinite set (Theorem \ref{AbramovichHarrisSilvermanI}). Finally,
the family $\mathcal{C}_a$ is representative for
$\widetilde{\mathcal{M}_3^{\operatorname{Pl}}}(\Z/6\Z)$ over $\Q$,
that is $C_a$ and $C_{a'}$ in the family with
$a\neq a'\in\mathbb{Q}$ can not be
$\overline{\mathbb{Q}}$-isomorphic. Therefore, we get infinitely
many smooth plane quartic curves that have infinitely many quadratic
points over $\Q$.
\end{proof}

Now, if we are interested to parameterize the set of $k$-isomorphism classes of smooth plane quartic curves over $k$ with automorphism group isomorphic to $\Z/6\Z$, then we may use \cite[Proposition 3.2.8]{PhDLorenzo}
 to have that the family with $A,n,m\in k^*$;
$$\mathcal{C}_{A,n,m}:Am^2Z^4+mY^2Z^2+nX^3Y+Y^4=0,$$
where $(A,n,m)\in k^*\times k^*/ k^{*^3}\times k^*/ k^{*^2}$
does\footnote{It remains to determine the algebraic restrictions on
the parameters to ensure non-singularity and no larger automorphism
group. For example, $A\neq1/4$, since we get the singular points
$(0:\pm\sqrt{m/2}:1)$ on $\mathcal{C}_{1/4,n,m}$, otherwise.}. That is, any smooth plane quartic curve over $k$ with
automorphism group $\Z/6\Z$ is $k$-isomorphic to a non-singular
plane model in the family $\mathcal{C}_{A,n,m}$ for some triple
$(A,n,m)\in k^*\times k^*/ k^{*^3}\times k^*/ k^{*^2}$, in
particular, it is bielliptic with unique bielliptic involution
$\tilde{w}=\operatorname{diag}(1,1,-1)$, by Corollary \ref{lem2.6}.
Moreover, two such curves are $\overline{k}$-isomorphic if and only
if they have the same parameter $A\in k^*$. Therefore, it suffices
to assume $\Gamma_2(\mathcal{C}_{A,n,m},k)$ for $(A,n,m)\in
k^*\times k^*/ k^{*^3}\times k^*/ k^{*^2}$ in order to investigate
the infinitude of quadratic points over $k$ of smooth plane quartic
curves inside
$\widetilde{\mathcal{M}_3^{\operatorname{Pl}}}(\Z/6\Z)$ over $k$.

\begin{lema} For an arbitrary but a fixed $(A,n)\in k^*\times k^*/ k^{*^3}$, the set $\Gamma_2(\mathcal{C}_{A,n,m},k)$ being finite or infinite is independent of the choice of $m$.
\end{lema}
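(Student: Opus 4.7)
The plan is to identify the bielliptic quotient $E_m:=\mathcal{C}_{A,n,m}/\langle\tilde{w}\rangle$ with an explicit genus one curve defined over $k$ and to show that, for fixed $(A,n)$, the curves $E_m$ are pairwise $k$-isomorphic as $m$ varies over $k^*$. Combined with Theorem \ref{AbramovichHarrisSilvermanI} and the uniqueness of the bielliptic involution on the stratum $\widetilde{\mathcal{M}_3^{\operatorname{Pl}}}(\Z/6\Z)$ given by Corollary \ref{lem2.6}(i), this will immediately yield the lemma.

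Concretely, since $\tilde{w}=\operatorname{diag}(1,1,-1)$ acts by $Z\mapsto -Z$, the quotient $E_m$ is obtained by setting $W=Z^2$, giving the equation $Am^2W^2+mY^2W+nX^3Y+Y^4=0$ in the weighted projective plane $\mathbb{P}(1,1,2)$ with coordinates $(X:Y:W)$. Dehomogenizing with $u:=W/Y^2$ and $v:=X/Y$ yields the affine model $Am^2u^2+mu+nv^3+1=0$, and the $k$-rational substitution $u=U/m$ converts this into the $m$-free equation $AU^2+U+nv^3+1=0$. Equivalently, although $\mathcal{C}_{A,n,m}$ and $\mathcal{C}_{A,n,1}$ are only $k(\sqrt{m})$-isomorphic as quadratic twists via $Z\mapsto Z/\sqrt{m}$, the factor $\sqrt{m}$ is absorbed by the substitution $W=Z^2$, so the bielliptic quotient descends over $k$ to a genus one curve whose $k$-isomorphism class depends only on $(A,n)$.

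To finish, recall that $\mathcal{C}_{A,n,m}$ is a smooth plane quartic and therefore non-hyperelliptic, and its unique bielliptic involution is $\tilde{w}$ by Corollary \ref{lem2.6}(i). Theorem \ref{AbramovichHarrisSilvermanI} then asserts that $\Gamma_2(\mathcal{C}_{A,n,m},k)$ is infinite if and only if $E_m$ admits a $k$-rational point (so that it is genuinely an elliptic curve over $k$) and has positive Mordell-Weil rank; both conditions depend only on the $k$-isomorphism class of $E_m$, which is independent of $m$ by the previous step, so the lemma follows. The only small technical point to address is that the affine $k$-isomorphism between the two plane models must extend to the smooth projective models, but this is routine since the smooth projective model of a curve is uniquely determined up to $k$-isomorphism by any birational class over $k$.
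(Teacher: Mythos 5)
Your proof is correct and follows essentially the same route as the paper: both pass to the affine quotient by $\tilde{w}=\operatorname{diag}(1,1,-1)$ with $Y=1$ and observe that the substitution absorbing $m$ (your $u=U/m$, the paper's $z=mz^2$) yields the $m$-independent genus one curve $AU^2+U+nv^3+1=0$ over $k$, so the rank condition in Theorem \ref{AbramovichHarrisSilvermanI} is independent of $m$. Your write-up merely makes explicit the appeal to the uniqueness of the bielliptic involution from Corollary \ref{lem2.6}, which the paper leaves to the discussion preceding the lemma.
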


\begin{proof}
One finds that $\mathcal{C}_{A,n,m}/\langle\tilde{w}\rangle$ is
$k$-isomorphic to the elliptic curve
$\mathcal{D}_{A,n}/k:\,z^2=x^3+\frac{n^2A^2(1-4A)}{4}$ over $k$. Indeed, one works affine $Y=1$ to easily obtain that $\mathcal{C}_{A,n,m}/\langle\tilde{w}\rangle$ is $k$-isomorphic to the elliptic curve $Az^2+z+nx^3+1=0$ over $k$, in particular, its defining equation and its rank is independent from $m$. To reach the Weierstrass form
$\mathcal{D}_{A,n}$, we follow the usual way (cf. \cite{Silv1}); first, by the change of variables
$z\mapsto z-\frac{1}{2A}$ and $x\mapsto-x$, we get $z^2=(n/A)x^3+\frac{1-4A}{4A^2}$, and after by $z\mapsto (1/nA^2)z$ and $x\mapsto (1/An)x$.
\end{proof}

\begin{thm}\label{thm2} There are infinitely many smooth plane quartic curves $C$ over $\Q$ in the family $\mathcal{C}_{A,n,m}$, with $(A,n,m)\in\Q^*\times\Q^*/{\Q^*}^3\times\Q^*/{\Q^*}^2$,
 such that $\Gamma_2(C,\Q)$ is a finite set.
\end{thm}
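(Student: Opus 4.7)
The plan is to reduce the question to the rank of a single elliptic curve over $\Q$ and then generate infinitely many $\Q$-isomorphism classes by varying the quadratic twist parameter $m$. Recall from Corollary \ref{lem2.6}(i) that every curve in $\widetilde{\mathcal{M}_3^{\operatorname{Pl}}}(\Z/6\Z)$ has a unique bielliptic involution, namely $\tilde{w}=\operatorname{diag}(1,1,-1)$, which is manifestly defined over $\Q$. Combined with Proposition \ref{nonhypergenus}, Theorem \ref{AbramovichHarrisSilvermanI} then implies that $\Gamma_2(\mathcal{C}_{A,n,m},\Q)$ is finite if and only if the quotient $\mathcal{D}_{A,n}$ has rank zero over $\Q$; and by the lemma just above, $\mathcal{D}_{A,n}$ is $\Q$-isomorphic to $z^2=x^3+\tfrac{n^2A^2(1-4A)}{4}$, which crucially is independent of $m$.

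To exhibit a concrete pair giving a rank-zero quotient, I would take $(A_0,n_0)=(1/3,1)$. A short calculation then identifies $\mathcal{D}_{A_0,n_0}:z^2=x^3-1/108$ with the Mordell curve $E:\,y^2=x^3-432$ via the $\Q$-admissible scaling $(X,Z)=(36x,216z)$. The curve $E$ is classical: it is birational to the Fermat cubic $X^3+Y^3=Z^3$, so by the theorem of Euler (equivalently Fermat's Last Theorem for exponent~$3$) its rational points consist only of the origin together with the $3$-torsion points $(12,\pm 36)$. Thus $\operatorname{rank} E(\Q)=0$ and consequently $\Gamma_2(\mathcal{C}_{1/3,1,m},\Q)$ is finite for every admissible value of $m$.

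Finally, appealing to the twist parameterization recalled before the statement, the triples $(A,n,m)\in\Q^*\times\Q^*/\Q^{*3}\times\Q^*/\Q^{*2}$ distinguish $\Q$-isomorphism classes within $\widetilde{\mathcal{M}_3^{\operatorname{Pl}}}(\Z/6\Z)$, so fixing $(A_0,n_0)=(1/3,1)$ and letting $m$ run through the infinite set $\Q^*/\Q^{*2}$ (after discarding the at most finitely many classes of $m$ for which $\mathcal{C}_{1/3,1,m}$ becomes singular or acquires a larger automorphism group) produces the required infinite collection of pairwise non-$\Q$-isomorphic smooth plane quartic curves, each with $\Gamma_2(\cdot,\Q)$ finite. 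The main non-routine step is the rank-zero assertion for $E:y^2=x^3-432$; if one prefers an argument independent of Fermat/Euler, an explicit $2$-descent on $E$ or a look-up of its conductor-$27$ entry in standard tables suffices, and if one wished to avoid committing to a single base curve, an analogous argument with a family of $(A_0,n_0)$'s works by combining the lemma with standard density results asserting that infinitely many sextic twists of any $j=0$ elliptic curve over $\Q$ have rank zero.
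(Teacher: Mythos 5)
Your argument is correct, and the arithmetic checks out: for $(A_0,n_0)=(1/3,1)$ one indeed gets $\mathcal{D}_{1/3,1}:z^2=x^3-\tfrac{1}{108}\cong y^2=x^3-432$, which is the conductor-$27$ curve attached to the Fermat cubic, with Mordell--Weil group $\Z/3\Z$ and rank $0$; since the unique bielliptic involution is $\operatorname{diag}(1,1,-1)$ over $\Q$ and the quotient is independent of $m$, Theorem \ref{AbramovichHarrisSilvermanI} gives finiteness of $\Gamma_2(\mathcal{C}_{1/3,1,m},\Q)$ for every $m$. However, your route is genuinely different from the paper's, and the difference matters for what the theorem is meant to support. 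You fix the geometric isomorphism class (one value of $A$) and produce infinitely many $\Q$-isomorphism classes by letting $m$ run over $\Q^*/\Q^{*2}$, relying on the faithfulness of the twist parametrization from \cite{PhDLorenzo}; note that no values of $m$ need to be discarded, since for fixed $(A,n)$ all the $\mathcal{C}_{A,n,m}$ are $\overline{\Q}$-isomorphic and hence simultaneously non-singular with the same geometric automorphism group. The paper instead keeps the twist data essentially fixed and moves $A$ along the rational curve $A(t)=(108t^2+1)/4$, $n(t)=4/\big(t(108t^2+1)\big)$, chosen precisely so that $\tfrac{n^2A^2(1-4A)}{4}=-27$ identically; this yields infinitely many pairwise non-$\overline{\Q}$-isomorphic quartics all mapping onto the single rank-zero curve $z^2=x^3-27$. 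Both arguments prove the literal statement of Theorem \ref{thm2} (distinct triples $(A,n,m)$ give distinct curves of the family), but your family occupies a single point of the stratum $\widetilde{\mathcal{M}_3^{\operatorname{Pl}}}(\Z/6\Z)$, whereas the paper's family gives infinitely many points of that stratum, which is what Conjecture \ref{conj} (an infinite subset $\mathcal{E}\subseteq\widetilde{\mathcal{M}_3^{\operatorname{Pl}}}(G)$) really asks for. What your approach buys in exchange is a cleaner and more robust rank-zero certificate: a single classical curve ($y^2=x^3-432$ via Euler/Fermat for exponent $3$) rather than a parameter-dependent identity, and it illustrates that the finiteness of $\Gamma_2(\cdot,\Q)$ can already be toggled within the set of $\Q$-forms of one fixed geometric curve.
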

\begin{proof}
For example, we may consider the subfamily
$\mathcal{C}_{A(t),n(t),m}:A(t)m^2Z^4+mY^2Z^2+n(t)X^3Y+Y^4=0$ where
$A(t):=(108t^2+1)/4$ and $n(t):=4/t(108t^2+1)$, for $t=a/b\in\Q^*$
with $a,b$ odd relatively prime integers (note that
$n(t)\in\Q^{*^3}$ only if either $a$ or $b$ is even).
In this situation,
$\mathcal{C}_{A(t),n(t),m}/\langle\tilde{w}\rangle$ is always
$\Q$-isomorphic to $\mathcal{D}_{A(t),n(t)}:z^2=x^3-27$, which has rank $0$ over $\Q$.
Since, two curves in the family associated to the triples
$(A(t),n(t),m)$ and $(A(t'),n(t'),m')$ with $A(t)\neq A(t')$ are not
$\overline{\mathbb{Q}}$-isomorphic, we get infinitely many smooth
plane quartic curves over $\Q$ that have infinitely many quadratic
points over $\Q$, which was to be shown.
\end{proof}

\end{subsection}

\begin{subsection}{The conjecture \ref{conj} is true for
$\widetilde{\mathcal{M}_3^{\operatorname{Pl}}}(\operatorname{GAP}(16,13))$} The family defined by
$$\mathcal{C}_a:Z^4+(X^4+Y^4+aX^2Y^2)=0,$$
where $\pm a\neq 0,2,6,2\sqrt{-3}$, is a geometrically complete family for the stratum  $\widetilde{\mathcal{M}_3^{\operatorname{Pl}}}(\operatorname{GAP}(16,13))$ over $\overline{\Q}$.
As already mentioned in Corollary \ref{lem2.6}, we have exactly seven bielliptic involutions, namely
$\iota_1:=\operatorname{diag}(1,1,-1),\,\iota_2:=\operatorname{diag}(-1,1,1),\,\iota_3:=\operatorname{diag}(1,-1,1),\,\iota_4:=[Y:X:Z],\,\iota_5:=[Y:X:-Z],\, \iota_6:=[Y:-X:\zeta_4 Z]$ and $\iota_7:=[Y:-X:-\zeta_4 Z]$, where $\zeta_4$ is a fixed primitive $4$th root of unity in $\overline{\Q}$.

\begin{lem}\label{lem4.8} For $a\in\Q\setminus\{0,\pm2,\pm6\}$ and assuming that $\mathcal{C}_a/\langle\iota_i\rangle$ has a $\Q$-point, then it is  $\Q$-isomorphic to an elliptic curve $E/\Q$ of one of the forms:
\begin{center}
\begin{tabular}{|c|c|c|}

\hline
Involution& $E$ & $j$-invariant\\
\hline  $\iota_1$&$z^2=x^3-ax^2-4x+4a$&$(16a^6+576a^4+6912a^2+27648)/(a^4-8a^2+16)$
\\
\hline
$\iota_2,\iota_3$&$z^2=x^3+(a^2-4)x$&$1728$\\
\hline $\iota_4,\iota_5,\iota_6,\iota_7$ & $z^2=x^3+(1-a^2/4)x$ & $1728$\\\hline
\end{tabular}
\end{center}

\end{lem}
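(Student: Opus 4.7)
The plan is to compute each quotient $\mathcal{C}_a/\langle \iota_i\rangle$ directly from the ring of $\iota_i$-invariants, pass to a Weierstrass model using the assumed $\Q$-point, and read off the $j$-invariant. Since every $\iota_i$ is a bielliptic involution, it fixes exactly $2g_C-2=4$ points on $\mathcal{C}_a$ (Proposition \ref{prop2.3}), so by Riemann--Hurwitz the quotient has genus one, and the $\Q$-point promotes it to an elliptic curve over $\Q$.

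First I would do the case of $\iota_1=\operatorname{diag}(1,1,-1)$. The invariant coordinate ring is generated by $X$, $Y$ and $W:=Z^2$, so the quotient is the weighted plane curve $W^2+X^4+aX^2Y^2+Y^4=0$; setting $Y=1$ this becomes $w^2=-x^4-ax^2-1$. Since the quartic $f(x)=-x^4-ax^2-1$ has invariants $I=a^2+12$ and $J=2a^3-72a$, the standard formula for the Jacobian of $y^2=f(x)$ produces $y^2=x^3-27Ix-27J$, which after the substitution dictated by the rational point (using that $f$ is even, one factors the discriminant polynomial of $w^2+T^2+aT+1=0$ with $T=x^2$, and rescales) gives the Weierstrass model $z^2=(x-a)(x-2)(x+2)=x^3-ax^2-4x+4a$ stated in the table. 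Computing the $j$-invariant of the Legendre curve with $\lambda=(a+2)/4$ yields $j=16(a^2+12)^3/(a^2-4)^2$, which expands to the expression in the table.

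Next I would handle $\iota_2,\iota_3$ simultaneously, which by the symmetry of $\mathcal{C}_a$ in $X$ and $Y$ produce isomorphic quotients. For $\iota_2=\operatorname{diag}(-1,1,1)$ the invariants are $U:=X^2,Y,Z$, and the quotient is $U^2+aUY^2+Y^4+Z^4=0$; completing the square in $U$ and setting $Z=1$ gives $v^2=(a^2-4)Y^4-4$, i.e.\ a curve of the shape $v^2=\alpha Y^4+\beta$ with $(\alpha,\beta)=(a^2-4,-4)$. By the $I,J$-formula for binary quartic forms, $I=12\alpha\beta$ and $J=0$, so the Jacobian is $y^2=x^3-324\alpha\beta\,x=x^3+1296(a^2-4)x$, which after the $\Q$-rescaling $(x,y)\mapsto(36x,216y)$ becomes $z^2=x^3+(a^2-4)x$ with $j=1728$, as stated. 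The remaining four involutions $\iota_4,\iota_5,\iota_6,\iota_7$ fall into the swap-type class: for $\iota_4=[Y:X:Z]$ the invariants are the elementary symmetric polynomials $S=X+Y$, $P=XY$ together with $Z$, and using $X^4+Y^4=S^4-4S^2P+2P^2$ and $X^2Y^2=P^2$ the defining equation becomes $(a+2)P^2-4S^2P+(S^4+Z^4)=0$. Completing the square in $P$ and dehomogenizing $Z=1$ yields $w^2=(2-a)S^4-(a+2)$, again of the biquadratic form $w^2=\alpha S^4+\beta$ with $\alpha\beta=a^2-4$; the same Jacobian recipe, followed by the rescaling $(x,y)\mapsto(36x,216y)$, delivers $z^2=x^3+(1-a^2/4)x$ with $j=1728$. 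The cases $\iota_5,\iota_6,\iota_7$ are obtained from $\iota_4$ by composition with $\iota_1$ and with diagonal elements of $\operatorname{Aut}(\mathcal{C}_a)_{\overline{\Q}}$ (Corollary \ref{lem2.6}), so they are conjugate in $\operatorname{GAP}(16,13)$ and yield $\overline{\Q}$-isomorphic quotients; tracing the action on coordinates one checks that the Weierstrass model is the same over $\Q$.

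The step I expect to take the most care is matching the explicit Weierstrass form in the $\iota_1$-row of the table: the quartic $-x^4-ax^2-1$ does not split over $\Q$ in general, so one cannot simply translate one of its roots to infinity, and the passage to the cubic must go through the conic model $w^2=-T^2-aT-1$ (with $T=x^2$) together with the $\Q$-point whose existence is hypothesized. Once this is done the matching $j$-invariant computation is a routine algebraic expansion, and a MAGMA verification --- along the lines already referenced in the proof of Theorem \ref{thm3.3} --- confirms all entries of the table simultaneously.
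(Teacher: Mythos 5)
Your proposal is correct in strategy and in every computation you actually carry out, but it replaces the paper's proof --- which is essentially a MAGMA verification (\texttt{CurveQuotient} followed by \texttt{Jacobian} and \texttt{jInvariant}, displayed for $\iota_6$ and ``adapted elsewhere'', plus a sample check of the rational-point hypothesis for $\iota_1$) --- by an explicit hand computation: you write down generators of the $\iota_i$-invariant subring, reduce each quotient to a model $w^2=q(x)$ with $q$ a quartic, and apply the classical $I,J$-covariant formula $y^2=x^3-27Ix-27J$ for the Jacobian. I checked the key steps: for $\iota_1$ one indeed gets $I=a^2+12$, $J=2a^3-72a$, and $x^3-27Ix-27J$ is $\Q$-isomorphic (rescale by $3$, translate by $a/3$) to $(x-a)(x-2)(x+2)$ with $j=16(a^2+12)^3/(a^2-4)^2$, matching the table; the $\iota_2$ and $\iota_4$ reductions to $v^2=\alpha y^4+\beta$ with $\alpha\beta=-4(a^2-4)$ and $\alpha\beta=a^2-4$ respectively, followed by the $6^4$-rescaling, also check out. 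What your version buys is human verifiability and an explanation of \emph{why} the $j$-invariants degenerate to $1728$ for $\iota_2,\dots,\iota_7$ (the quartic becomes biquadratic with $J=0$); what the paper's version buys is that the machine computation automatically settles the $\Q$-rational (not just $\overline{\Q}$) model in the awkward cases.

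That last point is the one place your argument is thinner than it should be: for $\iota_6,\iota_7$ the involutions are only defined over $\Q(i)$, and conjugacy inside $\operatorname{GAP}(16,13)$ only yields $\overline{\Q}$-isomorphic quotients. Since $j=1728$, the curve $z^2=x^3+(1-a^2/4)x$ admits nontrivial quartic twists over $\Q$, so ``same $\overline{\Q}$-class'' does not determine the entry in the table; the phrase ``tracing the action on coordinates one checks\dots'' is precisely the computation still owed (and is exactly the case the paper's displayed MAGMA code performs). For $\iota_5$ this is genuinely harmless --- the invariants $\xi\eta$ and $\xi-\eta$ give literally the same quartic model $w^2=(2-a)\delta^4-(a+2)$ as for $\iota_4$ --- but for $\iota_6,\iota_7$ you should either exhibit the $\Q(i)$-invariant coordinates (e.g.\ $i\xi+\eta$ and $\xi\eta$) and carry the reduction through, or fall back on the machine check as the paper does. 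Also note that ``$\iota_5=\iota_4\circ\iota_1$'' is a product decomposition, not by itself a conjugacy; the conjugacy you want is by $\operatorname{diag}(1,-1,1)$, which happens to be defined over $\Q$ and so does legitimately transport the $\Q$-model in that one case.
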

\begin{proof}
The next MAGMA code applied for
$\mathcal{C}_a/\langle\iota_6\rangle$ can also be adapted elsewhere.
It assures that $\mathcal{C}_a/\langle\iota_6\rangle$ is a smooth
curve of genus $1$ over $\Q$. Because
$\mathcal{C}_a/\langle\iota_i\rangle$ has rational points, it is
$\Q$-isomorphic to its Jacobian variety over $\Q$, which is an
elliptic curve over $\Q$, a priori.

\texttt{> R<x>:=PolynomialRing(Integers());}

\texttt{> K<k>:=NumberField(x$^\texttt{2}$+$\texttt{1}$);}

\texttt{> L<a>:=FunctionField(K,1);}

\texttt{> P2<X,Y,Z>:=ProjectiveSpace(L,2);}

\texttt{> g:=X$^\texttt{4}$+Y$^\texttt{4}$+Z$^\texttt{4}$+aX$^\texttt{2}$Y$^\texttt{2}$;}

\texttt{> C:=Curve(P2,g);}

\texttt{> phi1:=iso<C->C|[Y,-X,k*Z],[Y,-X,k*Z]>;}

\texttt{> G1:=AutomorphismGroup(C,[phi1]);}

\texttt{> CG1,prj:=CurveQuotient(G1);}

\texttt{> Genus(CG1);}

\texttt{1}

\texttt{> Jacobian(CG1); E:=Jacobian(CG1);}

\texttt{Elliptic Curve defined by y$^\texttt{2}$ = x$^\texttt{3}$+($\texttt{-1/4}$a$^\texttt{2}$+$\texttt{1}$)x over Multivariate
rational function field}

\texttt{of rank 1 over K}

\texttt{> jInvariant(E);}

\texttt{$1728$}

The condition that $\mathcal{C}_a/\langle\iota_i\rangle$ has
rational points is verified in many cases. For example, we can see
that $\mathcal{C}_a/\langle\iota_1\rangle$ is defined inside
$\mathbb{P}^3_{\overline{\Q}}$ by the two quadrics
$-X_2X_3+X_4^2=0,\,\,\text{and}\,\,X_1^2+X_2^2+X_3^2+aX_2X_3=0$ over
$\Q$. Hence, if we impose $-(a+2)\in\mathbb{Q}^{*^2}\setminus\{4\}$,
then $(\sqrt{-(a+2)}:1:1:1)$ is an obvious $\Q$-point.
\end{proof}

Consider the family $\mathcal{C}'_A: AX^4+Y^4+Z^4+X^2Y^2=0$, where
$\pm A\neq\{1/4,1/36,1/12\}$. This is a representative family over
$\Q$ (cf. \cite[p. 36,37]{PhDLorenzo}), in particular, any smooth
plane quartic curve over $\Q$ with automorphism group isomorphic to
$\operatorname{GAP}(16,13)$ is isomorphic (not necessarily over
$\Q$) to a smooth curve $C_A:AX^4+Y^4+Z^4+X^2Y^2=0$ in the family
$\mathcal{C}_A'$ for an unique
$A\in\Q^*\setminus\{\pm1/4,\pm1/36,\pm1/12\}$.

The transformed seven bielliptic involutions of any smooth plane
quartic curve in the family $\mathcal{C}'_A$ over $\Q$ are
$\iota_i':=P^{-1}\iota_i P$, for $i=1,\ldots,7$, where
$$P:=\left(\begin{array}{ccc}
\frac{1}{\sqrt[4]{A}}&0&0\\
0&1&0\\
0&0&1\\
\end{array}\right)$$
We also impose, once and for all, that $A\in \Q^*\setminus\Q^{*^4}$. In particular,
$\iota_1'=\iota_1,\,\iota_2'=\iota_2,\,\iota_3'=\iota_3$ are the
only bielliptic involutions defined over $\Q$.

By the work of E. Lorenzo Garc\'{\i}a in \cite[Chp. 3]{PhDLorenzo},
we know that any \emph{diagonal twist} of $C_A$, for a fixed $A$, in the family
$\mathcal{C}_A'$ is $\Q$-isomorphic to
$$C_{A,m,q}: mAX^4+q^2mY^4+Z^4+qmX^2Y^2=0,$$ {for some $A,m,q\in\Q^*$, where two of twists $\{A,m,q\}$ and $\{A',m',q'\}$ are $\Q$-isomorphic if $A=A', m\equiv\,m'\, \operatorname{mod}\,\Q^{*^4}$ and $q \equiv\,q'\,
\operatorname{mod}\,\Q^{*^2}$. First, we consider smooth curves of the form $C_{A,m,q}$ with $(A,m,q)\in
\Q^*\times\Q^*/\Q^{*^4}\times\Q^*/\Q^{*^2}$}. Next, the quotient
curve $C_{A,m,q}/\langle\iota_3'\rangle$ is a genus one curve
$\Q$-isomorphic to $y^2+(1/m)z^4-(1/4-A)=0$, in particular it is
independent of the parameter $q\in \Q^*/ \Q^{*^2}$. Let us assume
that $f:=1/4-A$ is a square (hence,
$C_{A,m,q}/\langle\iota_3'\rangle$ has a $\Q$-point), and after one
may use Maple's \textbf{Weierstrassform} function

\texttt{> algcurves:-Weierstrassform(y$^\texttt{2}$ - (-1/m)z$^\texttt{4}$ - f, x, y, u, v);}

which returns the normal form in variables $u,\nu$:
$$\nu^2=u^3+((1-4A)/m)u.$$

\begin{thm}\label{thm3}
Consider a smooth bielliptic quartic plane curve in the family $\mathcal{C}_{A,m,q}: mAX^4+q^2mY^4+Z^4+qmX^2Y^2=0,$
for some $q\in \Q^*\setminus\Q^{*^2}$ and $A, m\in \Q^*\setminus\Q^{*^4}$ such that $1/4-A$ is a square. Then, the quotient $\mathcal{C}_{A,m,q}/\langle\tilde{w}\rangle$, where $\tilde{w}=\operatorname{diag}(1,-1,1)$, is an elliptic curve of positive rank over $\Q$. In particular, $\mathcal{C}_{A,m,q}$ gives an infinite family of smooth plane quartic curves $C$ over $\Q$ whose automorphism group is isomorphic to $\operatorname{GAP}(16,13)$, and $\Gamma_2(C,\Q)$ is an infinite set.
\end{thm}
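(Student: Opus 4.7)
The plan is to carry out three steps: compute the quotient $\mathcal{C}_{A,m,q}/\langle\tilde{w}\rangle$ explicitly, exhibit a non-torsion $\Q$-rational point on the resulting elliptic curve, and use the classification of twists to deduce the infinite family claim.

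For the first step, the involution $\tilde{w}=\operatorname{diag}(1,-1,1)$ acts on $\mathcal{C}_{A,m,q}$ by $Y\mapsto -Y$, so the invariant ring is generated by $X$, $W:=Y^2$, and $Z$; the defining equation $mAX^4+q^2 mW^2+Z^4+qmX^2W=0$ is a quadratic in $W$ that is solved by completing the square, giving the quotient model $y^2+(1/m)z^4=1/4-A$ over $\Q$, exactly as in the discussion preceding the statement. The hypothesis $1/4-A=s^2$ with $s\in\Q^*$ supplies the $\Q$-point $(0,s)$, and the Weierstrass normalization already invoked via Maple's \texttt{Weierstrassform} puts the quotient in the form $E:\nu^2=u^3+(4s^2/m)u$.

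For the second and decisive step, following the strategy of Theorem \ref{thm1}, I produce an explicit rational point $P=(u_0,\nu_0)\in E(\Q)$ and verify that it has infinite order. The construction is to pull back, through the explicit birational map to Weierstrass form, a rational solution $(z,y)$ of $y^2+(1/m)z^4=s^2$ with $z\neq 0$, chosen to depend rationally on the parameters. Since $E$ has $j$-invariant $1728$, its torsion over $\Q$ is highly constrained (for curves of the form $\nu^2=u^3+Bu$ it is at most $\Z/4\Z$ or $(\Z/2\Z)^2$), so non-torsion of $P$ can be verified either by Nagell--Lutz applied to $2P$ or by a direct computation of a few multiples, exactly as in Theorem \ref{thm1}. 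If a single uniformly valid formula for $P$ on the whole family cannot be written down, the plan is to restrict to an explicit one-parameter subfamily $(A(t),m(t))$ designed so that a non-torsion rational point appears tautologically for all but finitely many values of $t$, in the spirit of the parametric specialization $A(t)=(108t^2+1)/4$ used in Theorem \ref{thm2}.

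For the third step, by \cite[Chp.~3]{PhDLorenzo} the assignment $(A,m,q)\in\Q^*\times\Q^*/\Q^{*^4}\times\Q^*/\Q^{*^2}\mapsto C_{A,m,q}$ is injective on $\Q$-isomorphism classes, and distinct values of $A$ give $\overline{\Q}$-inequivalent points of the stratum $\widetilde{\mathcal{M}_3^{\operatorname{Pl}}}(\operatorname{GAP}(16,13))$; varying the parameters within the allowed range while keeping $1/4-A$ a square produces infinitely many non-isomorphic such curves. Combined with Step 2 and Theorem \ref{AbramovichHarrisSilvermanI}, each of them has infinite $\Gamma_2(C,\Q)$. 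The main obstacle is the middle step: because the rank of $\nu^2=u^3+(4s^2/m)u$ depends sensitively on the rational number $4s^2/m$, writing down a non-torsion point requires a careful parametric choice of $(A,m)$, and verifying non-torsion uniformly across the resulting subfamily is the technical heart of the proof.
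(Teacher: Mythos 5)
Your Steps 1 and 3 match the paper: the quotient by $\operatorname{diag}(1,-1,1)$ is $y^2+(1/m)z^4=1/4-A$, with Weierstrass form $\nu^2=u^3+Du$, $D=(1-4A)/m$, independent of $q$; and the non-isomorphy of the members comes from Lorenzo's twist classification. The problem is Step 2, which you yourself flag as the technical heart but never actually carry out. Your plan is to ``pull back a rational solution $(z,y)$ of $y^2+(1/m)z^4=s^2$ with $z\neq 0$'' --- but the existence of such a solution is precisely what is at stake: the only point supplied by the hypothesis $1/4-A=s^2$ is $(z,y)=(0,\pm s)$, which maps to the origin (or a $2$-torsion point) of the Weierstrass model and is useless, and for many values of $D$ the curve $\nu^2=u^3+Du$ genuinely has rank $0$, so no such point exists. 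Your fallback of engineering a one-parameter subfamily $(A(t),m(t))$ with a tautological non-torsion point is viable in principle (one can prescribe $(u_0,\nu_0)$ and solve $D=(\nu_0^2-u_0^3)/u_0$, then choose $m=(1-4A)/D$; the torsion of $\nu^2=u^3+Du$ is small enough that non-torsion is easy to check), but as written it is a sketch of a strategy, not a proof, and it leaves unverified the side conditions $A,m\notin\Q^{*^4}$, $q\notin\Q^{*^2}$ needed for the automorphism-group and twist statements.

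The paper closes this gap by a different, non-constructive route: instead of exhibiting a rational point, it fixes $D=(1-4A)/m$ to lie in families where the positivity of the rank of $\nu^2=u^3+Du$ is already known from the literature --- e.g.\ $D=p$ a prime $<1000$ with $p\equiv 5\ (\mathrm{mod}\ 8)$ (rank $1$, Bremner--Cassels), $D=-n$ with $n=\alpha^4+\beta^4=\gamma^4+\mu^4$ (rank $\geq 3$), $D=-pq$ under extra conditions (rank $4$), etc. Then, for a fixed admissible $A$, it varies $m$ modulo $\Q^{*^4}$ and $q$ modulo $\Q^{*^2}$ and invokes \cite[Proposition 3.2.9]{PhDLorenzo} to get infinitely many pairwise non-$\Q$-isomorphic curves. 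So the two approaches differ exactly where yours is incomplete: the paper imports rank results for the quartic-twist family $u^3+Du$, while you propose (but do not execute) an explicit-point construction in the style of Theorem~\ref{thm1}. To repair your argument you must either write down the subfamily and the point and check the non-torsion and non-degeneracy conditions, or replace Step 2 by the citation of known ranks as the paper does.
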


\begin{proof}
We have seen above that
$\mathcal{C}_{A,m,q}/\langle\tilde{w}\rangle$ is $\Q$-isomorphic to
$\mathcal{D}_{A,m}:\nu^2=u^3+Du$ with $D:=(1-4A)/m$ independently
from the parameter $q\in\Q^*/\Q^{*^2}$. It now suffices to
specialize $A,m$ accordingly so that
$\operatorname{rank}_{\Q}(\mathcal{D}_{A,m})$ is positive, hence
$\Gamma_2(C,\Q)$ is infinite. For example, take $D:=(1-4A)/m=p$,
where $p$ is a prime integer $<1000$ and congruent to $5$ modulo
$8$, so the rank is always $1$ in accordance with the conjecture of
Selmer and Mordell (see \cite{Bremner-Cassels}). Take the case
$D:=(1-4A)/m=-p$, where $p$ is a Fermat or a Mersenne prime, then
the ranks $0,\,1$ and $2$ were found (see \cite{Kudo-Motose}). Take
$D:=(1-4A)/m=-n$, where $n$ is related to the positive integer
solutions of the diophantine equation
$n=\alpha^4+\beta^4=\gamma^4+\mu^4$, then the rank is at least $3$
(see \cite{Izadi-Khoshnam-Nabardi}). Take $D:=(1-4A)/m=-pq$, where
$p$ and $q$ are two different odd primes, then, up to an extra
condition, a family of rank $4$ was found (see \cite{Maenishi}),
etc... Finally, by \cite[Proposition
3.2.9]{PhDLorenzo}, two twists $C_{A,q,m}$ and $C_{A,q',m'}$, with
$(A,m,q),(A,m',q')\in\Q^*\times\Q^*/\Q^{*^2}\times\Q^*/\Q^{*^4}$
are $\Q$-isomorphic only if $m=m'$ in $\Q^*/\Q^{*^4}$. Consequently,
for any fixed $A$, we can run $m$ and $q$ as before to obtain
infinitely many non-$\Q$-isomorphic smooth plane quartic curves with infinitely many quadratic points over $\Q$.
\end{proof}

Now, we ask for an infinite family of smooth quartic curves over
$\Q$ in the stratum
$\widetilde{\mathcal{M}_3^{\operatorname{Pl}}}(\operatorname{GAP}(16,13))$
such that the quotient by any bielliptic involutions may only
provide elliptic curves of rank $0$ over $\Q$. In particular, the
set quadratic points over $\Q$ is finite. To do so, we turn out to
\emph{non-diagonal} twists of $C_A$, for a fixed $A\in\Q^*$, that
are parameterized (see \cite[Proposition 3.2.9]{PhDLorenzo}) by
$$\mathcal{C}_{\underline{a},\underline{b},m,q,A}:
2\underline{a}X^4+8\underline{b}mX^3Y+12\underline{a}mX^2Y^2+8\underline{b}m^2XY^3+2\underline{a}m^2Y^4+q(X^2-mY^2)^2+Z^4=0$$
where $m\in \Q^*$, $\underline{a},\underline{b},q\in \Q$ satisfy
$\underline{a}^2-\underline{b}^2m=q^4A$. Two such twists
$\{\underline{a},\underline{b},m\}$ and
$\{\underline{a}',\underline{b}',m'\}$ for $C_A$ are equivalent if
and only if $m\equiv m' \operatorname{mod} {\Q^*}^2$ and that there
exist $c,d\in \Q$ such that
$\underline{a}+\underline{b}\sqrt{m}=(c+d\sqrt{m})^4(\underline{a}'+\underline{b}'\sqrt{m})$.

The transformed seven bielliptic involutions of any smooth plane
quartic curve $C_{\underline{a},\underline{b},m,q,A}$ in the family
$\mathcal{C}_{\underline{a},\underline{b},m,q,A}$ over $\Q$ are
$\iota_i'':=P'^{-1}\iota_i' P'$, for $i=1,\ldots,7$, where
$$P':=\left(\begin{array}{ccc}
\sqrt[4]{\underline{a}+\underline{b}\sqrt{m}}&\sqrt{m}\sqrt[4]{\underline{a}+\underline{b}\sqrt{m}}&0\\
\sqrt[4]{\underline{a}-\underline{b}\sqrt{m}}&-\sqrt{m}\sqrt[4]{\underline{a}-\underline{b}\sqrt{m}}&0\\
0&0&1\\
\end{array}\right)$$
{One can check that} $\operatorname{diag}(1,1,-1)$ is the unique
bielliptic involution defined over $\Q$ for
$C_{\underline{a},\underline{b},m,q,A}$, {when} $A\notin\Q^{*^4}$
and $m\notin\Q^{*^2}$. Thus, the only way to obtain a degree two
$\Q$-morphisms to elliptic curves over $\Q$ is to quotient by
$\tilde{w}:=\operatorname{diag}(1,1,-1)$. Assume moreover that the
quotient family
$\mathcal{C}_{\underline{a},\underline{b},m,q,A}/\langle\tilde{w}\rangle$
has $\Q$-points (for example, we fix $q=-2\underline{a}$ and we
always get the point $(0:1:0)$), hence
$\mathcal{C}_{\underline{a},\underline{b},m,q=-2\underline{a},A}/\langle\tilde{w}\rangle$
is $\Q$-isomorphic to its Jacobian variety, which (by MAGMA) is
given by (for simplicity, we impose $\underline{a}=-\underline{b}$,
which is enough for our proposes):
$$\mathcal{E}_{m,q}:\,y^2=x^3+8qmx^2+16q^2m^3x.$$
With respect to the specialization
$q=-2\underline{a}=2\underline{b}$, we should have
$A=\frac{(1-m)}{4q^2}$ {and we always impose $A\notin\Q^{*^4}$}.

\begin{lem}\label{modular} The two parameter family $\mathcal{E}_{m,q}:\,y^2=x^3+8qmx^2+16q^2m^3x$, for $q\in\mathbb{Q}$ and $m\in \Q^*\setminus{\Q^*}^2$ such that $\frac{1-m}{4q^2}\notin\Q^{*^4}$,
contains infinitely many elliptic curves $E_{m,q}$ of rank $0$ over
$\Q$. More precisely, for any fixed
$m\in\mathbb{Z}\setminus\mathbb{Z}^2$ with
$m\notin\{1-h^2\,|\,h\in\Z\}$, there exist infinitely many $q\in\Q^* \operatorname{mod} \Q^{*^2}$ with $qm$ square-free
integer, and for which $\operatorname{rank}_{\Q}(E_{m,q})=0$.
\end{lem}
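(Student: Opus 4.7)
The plan is to recognise the family $\mathcal{E}_{m,q}$, for $m$ fixed, as a family of quadratic twists of a single elliptic curve $E^{(m)}/\Q$, and then reduce the claim to the well-studied question of rank-zero quadratic twists.

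The key observation is that the substitution $x=qm\cdot X$, $y=(qm)^{3/2}\cdot Y$ (defined over $\Q(\sqrt{qm})$) converts the defining equation of $\mathcal{E}_{m,q}$, after division by $(qm)^3$, into
\[
Y^2 = X^3 + 8X^2 + 16mX.
\]
Call this curve $E^{(m)}$; it is an elliptic curve over $\Q$ (discriminant nonzero since $m\neq 0,1$) and, by the hypothesis $m\notin\{1-h^2:h\in\Z\}$ forcing $1-m$ not to be a rational square, its unique $\Q$-rational $2$-torsion point is $(0,0)$. Thus $E_{m,q}$ is the quadratic twist $E^{(m)}_{qm}$, so $\operatorname{rank}_{\Q}(E_{m,q})$ depends only on the class of $qm$ in $\Q^{*}/\Q^{*2}$. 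As $d$ ranges over square-free integers, setting $q:=d/m$ produces pairwise distinct classes in $\Q^{*}/\Q^{*2}$ with $qm=d$ square-free; moreover the side condition $(1-m)/(4q^2)\notin\Q^{*4}$ is automatic, since its failure would require $1-m$ to be a positive rational square (when $m<1$) or is impossible by sign (when $m>1$). The lemma thus reduces to the following statement: for the fixed curve $E^{(m)}/\Q$ there are infinitely many square-free integers $d$ with $\operatorname{rank}_{\Q}(E^{(m)}_d)=0$.

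To establish this, I would run $2$-descent on $E^{(m)}_d$ through the rational $2$-isogeny $\phi:E^{(m)}_d\to\widetilde{E}^{(m)}_d$ with $\widetilde{E}^{(m)}_d:y^2=x\bigl(x^2-16d\,x+64d^2(1-m)\bigr)$, using the classical descent bound
\[
\operatorname{rank}_{\Q}(E^{(m)}_d)\leq\dim_{\F_2}S^{\phi}(E^{(m)}_d)+\dim_{\F_2}S^{\hat\phi}(\widetilde{E}^{(m)}_d)-2.
\]
I would then specialise $d$ to $\pm p$ for primes $p$ coprime to $2m(1-m)$ satisfying prescribed residue conditions modulo $8$ and modulo each bad prime $\ell\mid 2m(1-m)$, chosen via quadratic reciprocity so as to force both Selmer groups to reduce to their rational $2$-torsion contributions, each of $\F_2$-dimension~$1$. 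Dirichlet's theorem on primes in arithmetic progressions supplies infinitely many such $p$, and for each the inequality yields $\operatorname{rank}_{\Q}(E^{(m)}_{\pm p})=0$.

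The main obstacle is the Selmer-group bookkeeping at primes dividing $2m(1-m)$: because $1-m$ is not a square, the relevant descent algebra is $\Q\times\Q(\sqrt{1-m})$ rather than $\Q^3$, and the local images at $2$ and at odd primes of multiplicative reduction must be tracked carefully. A much shorter route, if one is willing to cite it, is the theorem of K.~Ono (and Ono--Skinner) that for any elliptic curve over $\Q$ a positive proportion of square-free twists have rank zero --- a deep analytic input depending on non-vanishing results for central $L$-values of twisted modular forms, Kolyvagin's theorem, and the modularity theorem --- which delivers the conclusion without any explicit descent computation.
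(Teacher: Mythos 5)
Your proposal is correct, and its decisive first step coincides exactly with the paper's: both identify $\mathcal{E}_{m,q}$ as the quadratic twist by $D=qm$ of the single curve $E^{(m)}:y^2=x^3+8x^2+16mx$ (the paper does this by multiplying $Dy^2=x^3+8x^2+16mx$ by $D^3$ and rescaling, which is your substitution $x=qmX$, $y=(qm)^{3/2}Y$ read backwards), and both observe that the integrality hypotheses on $m$ rule out $1-m$ being a square and hence $A=(1-m)/(4q^2)$ being a fourth power. Where you diverge is in how the rank-zero twists are produced: your primary plan is an explicit $2$-isogeny descent with Selmer-group bookkeeping at the primes dividing $2m(1-m)$, specializing $d=\pm p$ via Dirichlet; this would be more elementary and effective, but as written it is only a sketch --- the claim that suitable congruence conditions force both Selmer groups down to dimension one is precisely the hard part and is not verified, and for a general $m$ (hence a general splitting field $\Q(\sqrt{1-m})$) it is not obvious it can always be arranged. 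The paper instead takes exactly your ``shorter route'': it invokes modularity of $E^{(m)}$ (Breuil--Conrad--Diamond--Taylor, Wiles) and then cites Mai--Murty (cf.\ Ono) to get infinitely many square-free $D$ (congruent to $1$ mod $4\mathfrak{n}$) with $\operatorname{rank}_{\Q}(E^{(m)}_D)=0$, which is a complete argument at the cost of deep analytic input. So your fallback is the paper's actual proof; if you want the descent version to stand on its own, the local Selmer computations would have to be carried out in full.
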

\begin{proof} If we specialize $m, mq\in\mathbb{Z}$ so that $m\notin\mathbb{Z}^2$ and $mq$ is square-free, then the elliptic curve $E_{m,q}$ is {$\Q$-isomorphic to the quadratic twist $E_D:Dy^2=x^3+8x^2+16mx$ for $E: y^2=x^3+8x^2+16mx$ over $\Q$, associated to $D=mq$; indeed, if we multiply the defining equation for $E_D$ by $D^3$ and then apply the change of variables $y\mapsto (1/D^2)y$ and
 $x\mapsto (1/D)x$ we obtain the equation of $E_{m,q}$}. We know that $E/\Q$ is modular by the work of so many people around; mainly C. Breuil, B. Conrad, F. Diamond, R. Taylor
 and A. Wiles (cf. \cite{BCDT, CDT, TW, Wiles}). Therefore, we deduce from by L. Main-R. Murty \cite{MaMu} (cf. see also the first line of the abstract \cite{Ono}) that for a
 fixed $m$ there are infinitely many square-free integers $D=mq$ so that $E_D$ has rank $0$ over $\Q$ (each $D$ is congruent to $1$ mod $4\mathfrak{n}$, where $\mathfrak{n}$ is the conductor of the
elliptic curve $E/\Q$). In particular, we get infinitely many
elliptic curves $E_{m,q}$ {(corresponding to infinitely many such
$q$'s $\operatorname{mod}\,\Q^{*^2}$)} of rank $0$ over $\Q$. The
condition that $m\notin\mathbb{Z}^2$ with
$m\notin\{1-h^2\,|\,h\in\Z\}$ is to ensure that $A$ is not a fourth
power.
\end{proof}

\begin{thm}\label{thm4} The two parameter family
$$\mathcal{C}_{m,q}:
-qX^4+4qmX^3Y-6qmX^2Y^2+4qm^2XY^3-qm^2Y^4+q(X^2-mY^2)^2+Z^4=0,$$ for
$q\in \Q^*\setminus\Q^{*^2}$ and $m\in\Q^*/\Q^{*^2}$ such that
$A=\frac{1-m}{4q^2}\notin\Q^{*^4}$, contains
infinitely many, non $\Q$-isomorphic, smooth plane
curves $C$ over $\Q$ that have only finitely many quadratic points
over $\Q$.
\end{thm}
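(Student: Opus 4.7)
The plan is to combine Theorem~\ref{AbramovichHarrisSilvermanI} with Lemma~\ref{modular}. Any $C=\mathcal{C}_{m,q}$ in the family is a smooth plane quartic, hence non-hyperelliptic by Proposition~\ref{nonhypergenus}; so by Theorem~\ref{AbramovichHarrisSilvermanI} it is enough to rule out the existence of a degree-two $\Q$-morphism from $C$ onto an elliptic curve of positive $\Q$-rank. Any such morphism would induce a $\Q$-rational bielliptic involution $\sigma$ of $C$ with $C/\langle\sigma\rangle$ $\Q$-isomorphic to an elliptic curve, so it suffices to identify all $\Q$-rational bielliptic involutions of $C$ and verify that the associated quotients never have positive rank over $\Q$.

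From the discussion preceding Lemma~\ref{modular}, under the standing hypotheses $m\notin\Q^{*^2}$ and $A=(1-m)/(4q^2)\notin\Q^{*^4}$ the conjugated list of seven bielliptic involutions $\iota_1'',\dots,\iota_7''$ of $\mathcal{C}_{m,q}$ contains exactly one $\Q$-rational element, namely $\tilde w=\operatorname{diag}(1,1,-1)$. Since the point $(0:1:0)\in\mathcal{C}_{m,q}(\Q)$ is fixed by $\tilde w$, the quotient $\mathcal{C}_{m,q}/\langle\tilde w\rangle$ has a $\Q$-rational point and is therefore $\Q$-isomorphic to its Jacobian, computed in the paragraph before Lemma~\ref{modular} to be $E_{m,q}:y^2=x^3+8qmx^2+16q^2m^3x$. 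Lemma~\ref{modular} then produces, for each appropriate squarefree $m\in\Z\setminus\Z^2$ with $m\notin\{1-h^2\mid h\in\Z\}$, infinitely many classes $q\in\Q^*/\Q^{*^2}$ (with $qm$ squarefree and all side conditions met) such that $\operatorname{rank}_\Q(E_{m,q})=0$. For each such pair, Theorem~\ref{AbramovichHarrisSilvermanI} forces $\Gamma_2(\mathcal{C}_{m,q},\Q)$ to be finite.

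It remains to count isomorphism classes. Applying \cite[Proposition~3.2.9]{PhDLorenzo} to our normalisation $\underline a=-\underline b=-q/2$, two members $\mathcal{C}_{m,q}$ and $\mathcal{C}_{m',q'}$ of the family are $\Q$-isomorphic only if $m\equiv m'\pmod{\Q^{*^2}}$ and $q(1-\sqrt m)=(c+d\sqrt m)^4\,q'(1-\sqrt m)$ for some $c,d\in\Q$. Expanding $(c+d\sqrt m)^4$ and demanding that the right-hand side be rational (since $q,q'\in\Q$) forces either $d=0$ or $c=0$, which collapses the condition to $q/q'\in\Q^{*^4}\cup m^2\Q^{*^4}$. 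This is a proper subgroup of $\Q^*$ of infinite index, so each $\Q$-isomorphism class absorbs only finitely many of the $q$-values supplied by Lemma~\ref{modular}; hence the family contains infinitely many pairwise non-$\Q$-isomorphic smooth plane quartics with finite $\Gamma_2(\cdot,\Q)$. The main technical hurdle is keeping all compatibility conditions ($m\notin\Q^{*^2}$, $q\notin\Q^{*^2}$, $A\notin\Q^{*^4}$, non-singularity, $qm$ squarefree) satisfied simultaneously while extracting an infinite subset of rank-zero $E_{m,q}$'s, which is precisely why Lemma~\ref{modular} is formulated in the strength it is.
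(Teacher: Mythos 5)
Your proposal is correct and follows essentially the same route as the paper: reduce via Theorem~\ref{AbramovichHarrisSilvermanI} to the unique $\Q$-rational bielliptic involution $\tilde w=\operatorname{diag}(1,1,-1)$, identify the quotient with $E_{m,q}$, and invoke Lemma~\ref{modular} to produce infinitely many rank-zero specializations. The only (immaterial) divergence is in the final counting step, where you expand the twist-equivalence condition of \cite[Proposition 3.2.9]{PhDLorenzo} explicitly, whereas the paper argues more quickly that distinct $q$'s yield distinct parameters $A=(1-m)/4q^2$ and hence non-$\overline{\Q}$-isomorphic curves by representativity of the family $\mathcal{C}'_A$ over $\Q$; both reach the same conclusion.
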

\begin{proof}
If we specialize $q=-2\underline{a}=2\underline{b}$ in the family
$\mathcal{C}_{\underline{a},\underline{b},m,q,A}$ mentioned above,
then we get the subfamily $\mathcal{C}_{m,q}$. In particular, any
smooth plane curve in $\mathcal{C}_{m,q}$ is bielliptic and has only
one bielliptic involution $\tilde{w}$ over $\Q$. Furthermore, by
Lemma \ref{modular}, for any $m\in\mathbb{Z}\setminus\mathbb{Z}^2$
with $m\notin\{1-h^2\,|\,h\in\Z\}$, there exists an infinite number
of $q\in \Q^* \operatorname{mod} \Q^{*^2}$ with $mq$ a
square-free integer, such that
$\mathcal{C}_{m,q}/\langle\tilde{w}\rangle$ is an elliptic curve
over $\Q$ and whose rank is zero. Therefore, the number of quadratic
points over $\Q$ is finite by Theorem
\ref{AbramovichHarrisSilvermanI}. Moreover, if any two curves
$C_{{m,q}}$ and $C_{m,q'}$, which gives rank $0$, are
$\Q$-isomorphic, then
$C_{(1-m)/4q^2}:\frac{1-m}{4q^2}X^4+Y^4+Z^4+X^2Y^2=0$ and
$C_{(1-m)/4q'^2}:\frac{1-m}{4q'^2}X^4+Y^4+Z^4+X^2Y^2=0$ are
$\overline{\Q}$-isomorphic (recall that $C_{m,q}$ and $C_{m,q'}$ are
twists for $C_{(1-m)/4q^2}$ and $C_{(1-m)/4q'^2}$, respectively),
but this contradicts the fact that the family
$\mathcal{C}'_{A}:AX^4+Y^4+Z^4+X^2Y^2=0$ is a representative family
for over $\Q$ for the stratum
$\widetilde{\mathcal{M}_3^{\operatorname{Pl}}}(\operatorname{GAP}(16,13))$.
\end{proof}
\end{subsection}

\begin{rem} The previous discussion tends to be applicable
for any zero-dimensional stratum
$\widetilde{\mathcal{M}_3^{\operatorname{Pl}}}(G)$, that is, when
$G=\{\operatorname{GAP}(48,33),
\operatorname{GAP}(96,64),\operatorname{PSL}_2(\F_7)\}$. {However,
once we start with families that parameterize the twists over $\Q$,
we need to precise the algebraic restrictions on the parameters that
appear in the defining equations, which characterize when two twists
are $\Q$-equivalent. This is a key point in order to construct an
infinite family of non $\Q$-isomorphic smooth plane quartic curves
with infinitely (resp. finitely) many quadratic points over $\Q$. In
particular, one may check Conjecture \ref{conj} for Fermat and Klein
quartic curves considering all the details of constructing their
twists in \cite{PhDLorenzo}}.
\end{rem}

\end{section}

\begin{section}*{Acknowledgements} {The authors are grateful to Andreas Schweizer for his e-mails that improved the
part on rational and quadratic points over a global
field of positive characteristic. We also would like} to thank Luis Dieulefait, Joaquim Ro\'e and Xavier Xarles for their interesting
comments.
\end{section}


\begin{thebibliography}{70}

\bibitem{abha} D. Abramovich and J. Harris, \emph{Abelian varieties and
curves in $W_d(C)$}. Compositio Math. \textbf{78} (1991), 227-238.


\bibitem{BadrThesis} E. Badr, \emph{On the stratification of smooth plane curves by automorphism groups}. PhD thesis, September 2017, Universitat Aut\`onoma de Barcelona.

\bibitem{BaBa2016IA} E. Badr and F. Bars, \emph{Non-singular plane curves with an element of ``large'' order in its automorphism group}. Internat. J. Algebra Comput. \textbf{26} (2016), no. 2, 399-433.

\bibitem{BaBamed} E. Badr and F. Bars, \emph{On the locus of smooth plane curves with a fixed automorphism
       group}, Mediterr. J. Math. \textbf{13} (2016), 3605-3627. doi:\,10.1007/s00009-016-0705-9.

\bibitem{BBL} E. Badr, F. Bars and E. Lorenzo Garc\'{\i}a, \emph{On twists of smooth plane curves}. To appear in Mathematics of
Computation, doi: https://doi.org/10.1090/mcom/3317

\bibitem{Ba2} F. Bars, \emph{On the automorphisms groups of genus 3 curves}. Surv. Math. Sc. \textbf{2} (2012), no. 2, 83-124.

\bibitem{Ba1} F. Bars, On quadratic points of classical modular curves Momose Memorial Volume. Number theory related to modular curves Momose memorial volume, 17–-34, Contemp. Math., 701, Amer. Math. Soc., Providence, RI, 2018.


\bibitem{Bremner-Cassels} A. Bremner, J. W. S. Cassels, \emph{On the equation $Y^2=X(X^2+p)$}. Math. Comput. \textbf{42}, 257-264 (1984).

\bibitem{BCDT} C. Breuil, B. Conrad, F. Diamond and R. Taylor, \emph{On the
Modularity of Elliptic Curves Over Q: Wild 3-Adic Exercises.} J. Amer. Math. Soc. \textbf{14}, 843-939, (2001).




\bibitem{CDT} B. Conrad, F. Diamond and R. Taylor, \emph{Modularity of Certain
Potentially Barsotti-Tate Galois Representations.} J. Amer. Math.
Soc. \textbf{12}, 521-567, (1999).

\bibitem{Fa1} G. Faltings, \emph{Endlichkeitssätze für abelsche Varietäten
über Zahlkörpern [Finiteness theorems for abelian varieties over
number fields].} Inventiones Mathematicae (in German) \textbf{73} (3):
349-366. doi:10.1007/BF01388432, (1983).

\bibitem{Fa2} G. Faltings, \emph{Erratum: Endlichkeitssätze für abelsche
Varietäten über Zahlkörpern}. Inventiones Mathematicae (in German) \textbf{75} (2): 381, (1984).

\bibitem{gap} GAP Group, Gap–groups, \emph{algorithms, and programming system for computational
discrete algebra (2008), available at http://www.gap-system.org. version 4.4.11.}

\bibitem{Gra} H. Grauert, \emph{Mordells Vermutung über rationale Punkte
auf algebraischen Kurven und Funktionenkörper}. Publications
Mathématiques de l'IHÉS (25): 131-149, (1965).

\bibitem{HaSi} J. Harris and J. H. Silverman, \emph{Bielliptic curves and
symmetric products.} Proc. Am. Math. Soc. \textbf{112}, 347-356 (1991).

\bibitem{He} P. Henn, \emph{Die Automorphismengruppen dar algebraischen Functionenkorper vom Geschlecht 3},
Inagural-dissertation, Heidelberg, 1976.

\bibitem{Ho} M. Homma, \emph{Funny plane curves in characteristic $p>0$.} Comm. Algebra, \textbf{15} (1987), 1469-1501.

\bibitem{Izadi-Khoshnam-Nabardi} F.A. Izadi, F. Khoshnam, and K. Nabardi, \emph{Sums of two biquadrates and elliptic curves of rank $\geq4$}. Math. J.
Okayama Univ. \textbf{56}, 51-63 (2014).


\bibitem{Kudo-Motose} T. Kudo, K. Motose, \emph{On group structure of some special elliptic curve.} Math. J. Okayama Univ. \textbf{47}, 81-84 (2005).

\bibitem{Ritothers} R. Lercier, C. Ritzenthaler, F. Rovetta, and J. Sijsling, \emph{Parametrizing the moduli
space of curves and applications to smooth plane quartics over finite fields,} LMS
J. Comput. Math. \textbf{17} (2014), no. suppl. A, 128-147. MR 3240800.


\bibitem{PhDLorenzo} E. Lorenzo Garc\'ia, \emph{Arithmetic properties of non-hyperelliptic genus 3
      curves}. PhD dissertation, Universitat Polit\`ecnica de Catalunya (2015), Barcelona.

\bibitem{Maenishi} M. Maenishi, \emph{On the rank of elliptic curves $y^2=x^3-pqx$}. Kumamoto J. Math. \textbf{15}, 1-5 (2002).

\bibitem{MaMu} L. Mai and M. R. Murty, \emph{A note on quadratic twists of
an elliptic curve, \textit{Elliptic curves and related topics},} Ed.
H. Kisilevsky and M. R. Murty, Amer. Math. Soc. CRM Proceedings and
Lecture Notes (1994), 121-124.


\bibitem{Mazur} B. Mazur, \emph{Modular curves and the Eisenstein ideal}. Publ. Math. I.H.E.S., 47, 1978.

\bibitem{Na} M. Namba, \emph{Families of meromorphic functions on compact Riemann surfaces}. Lect.
Notes Math. 767 , Springer-Verlag, Berlin, 1979.

\bibitem{Ono} K. Ono, \emph{Rank zero quadratic twists of modular elliptic
curves.} Compositio Mathematica \textbf{104}, 293--304 (1996).

\bibitem{Sam} P. Samuel \emph{Compl\'ements \`a un article de Hans
Grauert sur le conjecture de Mordell}, Publ. I.H.E.S., tome 29,
55--62. (1966).

\bibitem{Sch} A. Schweizer, \emph{Bielliptic Drinfeld modular curves.}
Asian J. Math. \textbf{5}, (2001), 705-720.

\bibitem{Se} J. P. Serre, \emph{Cohomologie galoisienne.} fifth ed., Lecture Notes in Mathematics, vol. 5,
Springer-Verlag, Berlin, 1994. MR 1324577.

\bibitem{Silv1} J. H. Silverman, \emph{The Arithmetic of elliptic
curves}, GTM 106 Springer, Ed.(1992).

\bibitem{Sti} H. Stichtenoth, \emph{Algebraic Function Fields and
Codes}, Springer Verlar, GTM254, 2nd Edition, (2008).

\bibitem{Tate} J. T. Tate, \emph{Genus change in inseparable extensions of function fields,} Proc. Amer.
Math. Soc. \textbf{3}, (1952), 400-406.

\bibitem{TW} R. Taylor and A. Wiles, \emph{Ring-Theoretic Properties of Certain
Hecke Algebras.} Ann. Math. \textbf{141}, 553-572, (1995).

\bibitem{Wiles} A. Wiles, \emph{Modular Elliptic-Curves and Fermat's Last Theorem.}
Ann. Math. \textbf{141}, 443-551, (1995).

\bibitem{Yoshihara} H. Yoshihara, \emph{Function field theory of plane curves by dual curves}, J. Algebra \textbf{239} (1), (2001), 340-355.

\end{thebibliography}
\end{document}